\theoremstyle{plain}
\newtheorem{theo}{Theorem}[section]
\newtheorem{lemm}[theo]{Lemma}
\newtheorem{prop}[theo]{Proposition}
\newtheorem{coro}[theo]{Corollary}
\theoremstyle{definition}
\newtheorem{defi}[theo]{Definition}
\newtheorem{exam}[theo]{Example}
\theoremstyle{remark}
\newtheorem{rema}[theo]{Remark}
\newcommand{\bfR}{\mathbf{R}}
\newcommand{\bfT}{\mathbf{T}}
\newcommand{\HH}{\mathbb{H}}
\newcommand{\QQ}{\mathbb{Q}}
\newcommand{\RR}{\mathbb{R}}
\newcommand{\CC}{\mathbb{C}}
\newcommand{\ZZ}{\mathbb{Z}}
\newcommand{\cH}{\mathcal{H}}
\newcommand{\cO}{\mathcal{O}}
\newcommand{\cP}{\mathcal{P}}
\newcommand{\cQ}{\mathcal{Q}}
\newcommand{\cS}{\mathcal{S}}
\newcommand{\lind}[1]{_{(#1)}}
\newcommand{\ubind}[1]{^{[#1]}}
\newcommand{\lsta}{_{*}}
\newcommand{\sta}{^{*}}
\newcommand{\dual}{^{\vee}}
\DeclareMathOperator{\Hom}{Hom}
\DeclareMathOperator{\rank}{rank}
\DeclareMathOperator{\codim}{codim}
\DeclareMathOperator{\im}{im}
\DeclareMathOperator{\id}{id}
\DeclareMathOperator{\gr}{gr}
\DeclareMathOperator{\Spec}{Spec}
\newcommand{\SheafHom}{\mathcal{H}om}
\newcommand{\SheafExt}{\mathcal{E}xt}
\newcommand{\duBois}{\underline{\Omega}}
\DeclareMathOperator{\IC}{IC}
\DeclareMathOperator{\lcd}{lcd}
\DeclareMathOperator{\lcdef}{lcdef}
\DeclareMathOperator{\Ish}{Ish}
\begin{document}
	\thanks{The authors were partially supported by NSF grant DMS-2301463.}
	
	\subjclass[2020]{14B05, 14B15, 14M25, 32S50, 52B20}
    
    \textbf{}
	
	\author{Hyunsuk Kim}
	
	\address{Department of Mathematics, University of Michigan, 530 Church Street, Ann Arbor, MI 48109, USA}
	
	\email{kimhysuk@umich.edu}	
	
	\author{Sridhar Venkatesh}
	
	\address{Department of Mathematics, University of Michigan, 530 Church Street, Ann Arbor, MI 48109, USA}
	
	\email{srivenk@umich.edu}
	
	\begin{abstract} 
        Given a proper toric variety and a line bundle on it, we describe the morphism on singular cohomology given by the cup product with the Chern class of that line bundle in terms of the data of the associated fan. Using that, we relate the local cohomological dimension of an affine toric variety with the Lefschetz morphism on the singular cohomology of a projective toric variety of one dimension lower. As a corollary, we show that the local cohomological defect is not a combinatorial invariant. We also produce numerous examples of toric varieties in every dimension with any possible local cohomological defect, by showing that the local cohomological defect remains unchanged under taking a pyramid.
	\end{abstract} 
	
	\title[Lefschetz and local cohomological dimension for toric varieties]{Lefschetz morphisms on singular cohomology and local cohomological dimension of toric varieties}
	
	\maketitle
	
	\setcounter{tocdepth}{1}
	
    
	\section{Introduction}


    This article is a follow-up to our paper \cite{LCDTV1} on the local cohomology and singular cohomology of toric varieties, where we study the \textit{trivial Hodge module} on toric varieties. One of the main ingredients in \textit{loc. cit.} is the fact that the Grothendieck dual of the Du Bois complex can be described very explicitly, using the \textit{Ishida complex} (see \cite{Ishida2}). In this article, we exploit this fact in more detail and give several refined results on the local cohomological dimension and singular cohomology of toric varieties. Unlike in \textit{loc. cit.}, we keep the use of Hodge modules to a minimum and instead rely on more classical techniques. Throughout the article, we work over the complex numbers $\CC$.

    The local cohomological dimension of a variety $X$ embedded inside a smooth variety $Y$ is defined as the right end of the cohomological range for the local cohomology sheaves $\cH_{X}^{q}(\cO_{Y})$, i.e.,
    $$ \lcd(Y, X) := \max \{ q : \cH_{X}^{q}(\cO_{Y}) \neq0\}.$$
    In contrast to the fact that the other end admits an easy description as 
    $$\min \{ q : \cH_{X}^{q} \cO_{Y} \neq 0\} = \codim(Y, X),$$
    the local cohomological dimension is much more subtle and interesting. An equivalent way to describe it is via the \textit{local cohomological defect} (introduced in \cite{Popa-Shen:DuBoisLCDEF}),
    $$ \lcdef(X) := \lcd(Y, X)- \codim_{Y}(X). $$ 
    In particular, we always have $\lcdef(X) \geq 0$. The local cohomological defect only depends on $X$ and not on the embedding of $X$ inside $Y$. From the description of the local cohomology in terms of the \v{C}ech complex, it is clear that $\lcd(Y, X)$ cannot exceed the number of equations locally defining $X$. In particular, if $X$ is a local complete intersection (lci) variety, then $\lcd(Y, X) = \codim_{Y}(X)$, i.e., $\lcdef(X) = 0$. Therefore, $\lcdef(X)$ can be thought of as a coarse measure of how far the variety $X$ is from being lci. For a Cohen--Macaulay variety $X$ of dimension $n$, we have $\lcdef(X) \leq \max\{0,n-3\}$ by \cite{Dao-Takagi}.

    In this article, we study the local cohomological defect of \textit{toric varieties}. A toric variety $X$ is a normal algebraic variety containing a torus $T$ as an open dense subset such that the action of $T$ on itself extends to an action of $T$ on the whole variety $X$. Toric varieties provide an interesting interplay between algebraic geometry and convex geometry since they admit an alternate description in terms of convex geometric objects. To be precise, every $n$-dimensional affine toric variety $X$ is associated to a strongly convex rational polyhedral cone $\sigma \subset N \otimes \mathbb{R}$, where $N$ is a free abelian group of rank $n$. More generally, we have a correspondence between toric varieties and \textit{fans}. See \S \ref{section:toric-var-basic} for details.

    The singularities of a toric variety are rather mild, for instance, they have rational singularities and are hence Cohen--Macaulay (in particular, the dualizing complex $\omega^\bullet_X$ is equal to the shifted dualizing sheaf $\omega_X[n]$). However, they are typically far from being lci, which makes their local cohomological defect highly interesting.

    \subsection{The local cohomological defect in terms of cones}

    The starting point of our study is the following result of Musta\c{t}\u{a}--Popa suitably restated for Cohen--Macaulay varieties. It says that the local cohomological defect of a Cohen--Macaulay variety $X$ can be calculated by analyzing the cohomologies of $\bfR\SheafHom_{\cO_{X}}(\duBois_{X}^{p}, \omega_{X})$, the Grothendieck dual of the \textit{$p$-th Du Bois complex} of $X$.

    \begin{theo}[\cite{Mustata-Popa22:Hodge-filtration-local-cohomology}*{Corollary 5.3}] \label{theo:MP-lcdef-and-depth-of-Du-Bois}
    Let $X$ be a Cohen--Macaulay variety. Then, $\lcdef(X)$ is the maximal integer $c$ satisfying the following two properties:
        \begin{enumerate}
            \item $\SheafExt_{\cO_{X}}^{j+c} (\duBois_{X}^{j}, \omega_{X}) \neq 0$ for some $j \geq 0$, and
            \item $\SheafExt_{\cO_{X}}^{j+c+1} (\duBois_{X}^{j}, \omega_{X}) = 0$ for all $j \geq 0$.
        \end{enumerate}
    \end{theo}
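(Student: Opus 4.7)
The plan is to combine two ingredients: (i) the local cohomological defect can be detected from the non-vanishing of the local cohomology sheaves $\cH^q_X(\cO_Y)$, and (ii) by the theory of the Hodge filtration on local cohomology developed by Musta\c{t}\u{a}--Popa, these sheaves underlie mixed Hodge modules whose Hodge-graded pieces are expressed, via Grothendieck duality on the smooth ambient $Y$, as Ext sheaves of the Du Bois complexes $\duBois^p_X$ against $\omega_Y$.

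Setting $r = \codim_Y(X)$, so that $\lcdef(X) = \max\{c \geq 0 : \cH^{r+c}_X(\cO_Y) \neq 0\}$, I would proceed as follows. First recall that the Hodge filtration on each $\cH^q_X(\cO_Y)$ is bounded and exhaustive, so the sheaf is nonzero if and only if some graded piece $\gr^F_p \cH^q_X(\cO_Y)$ is nonzero. Next, invoke the Musta\c{t}\u{a}--Popa identification of these graded pieces with sheaves of the form $\SheafExt^{\ast}_{\cO_Y}(\duBois^p_X, \omega_Y)$ in an appropriate cohomological degree. Finally, apply Grothendieck duality along the closed immersion $i : X \hookrightarrow Y$: since $X$ is Cohen--Macaulay, we have $i^! \omega_Y[n] \cong \omega_X[d]$, which converts $\SheafExt^{s}_{\cO_Y}(i_*(-), \omega_Y)$ into $i_*\SheafExt^{s - r}_{\cO_X}(-, \omega_X)$. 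After reindexing, the non-vanishing of $\cH^{r+c}_X(\cO_Y)$ becomes equivalent to the non-vanishing of $\SheafExt^{j+c}_{\cO_X}(\duBois^j_X, \omega_X)$ for some $j \geq 0$, and taking the maximum over $c$ delivers both conditions (1) and (2) simultaneously.

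The main obstacle is the bookkeeping of cohomological indices: one must line up the degree shifts in the Musta\c{t}\u{a}--Popa formula, the $-r$ shift from Grothendieck duality, and the shifts coming from the Du Bois complex (which lives in non-negative degrees) so that the parameter $c$ on the local-cohomology side equals exactly the parameter $c$ appearing in the Ext--Du Bois discrepancy on the other side. A secondary point, already guaranteed by the Musta\c{t}\u{a}--Popa framework, is the boundedness of the Hodge filtration on local cohomology, which is what justifies detecting the non-vanishing of $\cH^q_X(\cO_Y)$ at the level of a single graded piece and hence at the level of a single Ext sheaf of a Du Bois complex.
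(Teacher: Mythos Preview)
Your proposal is correct and follows the same route as the paper: the paper does not give an independent proof but simply observes that the statement is equivalent to \cite{Mustata-Popa22:Hodge-filtration-local-cohomology}*{Corollary 5.3} after applying Grothendieck duality along a closed embedding $X\hookrightarrow Y$ into a smooth variety and using $\omega_X^\bullet\simeq\omega_X[\dim X]$ from the Cohen--Macaulay hypothesis. You have unpacked a bit more of the Hodge-module content behind that citation (the boundedness of the Hodge filtration and the identification of graded pieces with $\SheafExt$'s of Du Bois complexes), but the essential reduction---cite Musta\c{t}\u{a}--Popa, dualize, and shift by $r=\codim_Y X$---is identical.
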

    Theorem \ref{theo:MP-lcdef-and-depth-of-Du-Bois} is equivalent to \cite{Mustata-Popa22:Hodge-filtration-local-cohomology}*{Corollary 5.3} by using Grothendieck duality for an embedding $X \hookrightarrow Y$ to a smooth variety $Y$, and by observing that $\omega_{X}^{\bullet} \simeq \omega_{X}[\dim X]$, since $X$ is Cohen--Macaulay.

    The $p$-th Du Bois complex $\duBois_{X}^{p}$ can be thought of as a better-behaved substitute for the K\"ahler differentials $\Omega_{X}^p$ when $X$ is singular. For instance, the Hodge-to-de Rham spectral sequence degenerates at the $E_1$-page for singular projective varieties if one uses the Du Bois complex in place of the K\"ahler differentials. For the purpose of this article, we note that the Du Bois complexes and their Grothendieck duals admit rather nice descriptions when $X$ is a toric variety. By \cite{Guillen-Navarro-Gainza:Hyperresolutions-cubiques}*{V.4}, the Du Bois complex $\duBois_{X}^{p}$ coincides with the sheaf of reflexive differentials $\Omega_{X}\ubind{p}$. On the other hand, it follows by \cite{Ishida2} that its Grothendieck dual is given by:
    $$ \bfR\SheafHom_{\cO_{X}}(\duBois_{X}^{p}, \omega_{X}) \simeq \Ish_{X}^{n-p},$$
    where $\Ish_{X}^{n-p}$ is the $(n-p)$-th \textit{Ishida complex} of $X$ (see \S\ref{section:Ishida-complex}), and $n$ is the dimension of $X$. This is a very explicit complex lying in cohomological degrees $0$ to $n-p$, whose terms consist of structure sheaves of various torus-invariant closed subsets. Moreover, if we take into account the torus action, the Ishida complex essentially decomposes into copies of $\Ish^{n-p}_\sigma$, where $\sigma$ runs over all the cones in the fan associated to $X$. Here, $\Ish^{n-p}_\sigma$ is a complex of finite dimensional vector spaces, defined purely in terms of the cone $\sigma$ (see \S\ref{section:Ishida-complex}).
    This naturally leads us to propose the following definition of the local cohomological defect of a cone.

    \begin{defi}
        For a strongly convex rational polyhedral cone $\sigma$ of dimension $n$, we define the local cohomological defect of the cone, denoted $\lcdef(\sigma)$, to be the maximal integer $c$ satisfying the following two properties:
        \begin{enumerate}
            \item $H^{j+c}(\Ish_{\sigma}^{n-j}) \neq 0$ for some $j \geq 0$, and
            \item $H^{j+c+1}(\Ish_{\sigma}^{n-j}) = 0$ for all $j \geq 0$,
        \end{enumerate}
        where $H^k(\Ish^l_\sigma)$ denotes the $k$-th cohomology group of the complex $\Ish^l_\sigma$.
    \end{defi}

    The key point is that $\Ish^{l}_{\sigma}$ is a complex of finite dimensional vector spaces constructed in a very explicit manner from $\sigma$ and hence, computing $\lcdef(\sigma)$ can be done algorithmically. From the local cohomological defect of the cone, it is easy to show that we can recover the local cohomological defect of the toric variety $X$.

    \begin{prop}\label{prop:lcdef-variety-in-terms-of-cone}
        Let $X$ be an affine toric variety corresponding to a strictly convex rational polyhedral cone $\sigma$. Then we have
        $$ \lcdef(X)= \max_{\tau \subset \sigma} \lcdef(\tau)$$
        where $\tau$ runs through the faces of $\sigma$.
    \end{prop}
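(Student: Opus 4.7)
The plan is to decouple the Musta\c{t}\u{a}--Popa criterion into contributions from each face of $\sigma$, using the $T$-equivariant structure of the Ishida complex.

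By Theorem \ref{theo:MP-lcdef-and-depth-of-Du-Bois} and the identification $\bfR\SheafHom_{\cO_X}(\duBois_X^p,\omega_X) \simeq \Ish_X^{n-p}$ (with $n = \dim \sigma$), $\lcdef(X)$ is the largest integer $c$ for which some cohomology sheaf $\cH^{j+c}(\Ish_X^{n-j})$ is nonzero while all $\cH^{j+c+1}(\Ish_X^{n-j})$ vanish. Since $\Ish_X^\bullet$ is $T$-equivariant, each cohomology sheaf splits according to the $M$-grading,
$$\cH^i(\Ish_X^p) \;=\; \bigoplus_{m \in M} \cH^i(\Ish_X^p)_m,$$
and a sheaf is nonzero if and only if some weight piece is.

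Using the explicit construction of the Ishida complex recalled in \S\ref{section:Ishida-complex}, I would verify the following weight-wise dictionary: for each $m \in M$ there is a canonical face $\tau(m) \leq \sigma$ (the smallest face of $\sigma$ on whose relative interior the linear functional $m$ is supported) together with a natural isomorphism between the weight piece $\cH^i(\Ish_X^p)_m$ and the $i$-th cohomology of the combinatorial Ishida complex of $\tau(m)$, after a suitable reindexing of $p$ relative to $\dim \tau(m)$. Every face $\tau \leq \sigma$ arises as $\tau(m)$ for some $m$. Consequently, the set of pairs $(j,c)$ for which some $\cH^{j+c}(\Ish_X^{n-j})$ is nonvanishing matches exactly, across weights, the union over faces $\tau \leq \sigma$ of the pairs for which $H^{j'+c}(\Ish_\tau^{\dim \tau -j'})$ is nonvanishing for some $j' \geq 0$. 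The Musta\c{t}\u{a}--Popa criterion for $\lcdef(X)$ therefore splits into a family of criteria indexed by the faces of $\sigma$, each of which is precisely the defining criterion for $\lcdef(\tau)$, and taking the maximum over faces yields $\lcdef(X) = \max_{\tau \leq \sigma} \lcdef(\tau)$.

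The main technical obstacle is pinning down the precise weight-wise dictionary between the cohomology of $\Ish_X^\bullet$ and the cohomology of the smaller combinatorial complexes $\Ish_\tau^\bullet$, together with the bookkeeping of the shifts so that the free parameter $j$ in the Musta\c{t}\u{a}--Popa criterion for $X$ matches (up to reindexing) the free parameter in the definition of $\lcdef(\tau)$ on each face. Both points reduce to a direct inspection of Ishida's construction and are combinatorially delicate, but once established, the remaining argument---invoking Theorem \ref{theo:MP-lcdef-and-depth-of-Du-Bois} and regrouping weights by faces---is formal.
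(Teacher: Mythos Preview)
Your approach is correct and is exactly the paper's: the paper proves Proposition~\ref{prop:lcdef-variety-in-terms-of-cone} by combining Proposition~\ref{prop:lcd-intermsof-Ishida} (the Ishida reformulation of the Musta\c{t}\u{a}--Popa criterion) with Lemma~\ref{lemm:grade-parts-of-Ishida-complex}, which is precisely the weight-wise dictionary you are after. One small correction: the degree-$u$ piece for $u \in \tau^*_\circ$ is not a single reindexed $\Ish_\tau^{p'}$ but rather $\bigoplus_k \bigwedge^k \tau^\perp \otimes \Ish_\tau^{\,p-k}$, and it is the top wedge summand $k = n - d_\tau$ that realizes the equality of defects, while the lower summands only contribute smaller values of $c$.
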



    In particular, this answers a question of Musta\c{t}\u{a}-Popa \cite{Mustata-Popa22:Hodge-filtration-local-cohomology}*{Remark 4.31} in the sense that one can write a computer program computing the local cohomological dimension of toric varieties. Yet, a more direct relation between the cone and the vanishing and non-vanishing behavior of the Ishida complexes needs further investigation, and this seems to be a very interesting and subtle problem as we already see in the 4-dimensional case (see Examples \ref{exam:lcdef-non-combinatorial} and \ref{exam:our-method-doesnot-work}).

    \subsection{Lefschetz morphism on the singular cohomology}
    We now state our main result which relates the cohomologies of $\Ish^l_\sigma$ with the Lefschetz morphism on singular cohomology of a projective toric variety of one dimension lower. This is philosophically similar to the `global-to-local' principle appearing in \cite{Fieseler-ICprojtoric}.

    \begin{theo} \label{theo:lefschetz-ishida-correct-ver}
        Let $X$ be an $n$-dimensional affine toric variety associated to a full-dimensional cone $\sigma$. Let $\rho$ be a rational ray in the interior of $\sigma$ and consider the toric morphism $\pi \colon \widetilde{X} \to X$ corresponding to inserting the ray $\rho$ in $\sigma$. Let $E$ be the projective toric variety given by the inverse image of the torus fixed point. Then we have the long exact sequence
        $$ \ldots \to  \HH^{i-1}(E, \Ish_{E}^{l-1}) \xrightarrow{c_{1}\dual} \HH^{i}(E, \Ish_{E}^{l}) \to H^{i}(\Ish_{\sigma}^{l}) \to \HH^{i}(E, \Ish_{E}^{l-1}) \xrightarrow{c_{1}\dual} \ldots $$
        where $c_{1}\dual \colon \HH^{i-1}(E, \Ish_{E}^{l-1}) \to \HH^{i}(E,\Ish_{E}^{l})$ is dual to $c_1:H^{n-i-1}(E,\duBois^{n-l-1}_E) \to H^{n-i}(E, \duBois^{n-l}_E)$, the Chern class map of the $\QQ$-Cartier divisor class $(-E)|_{E}$ on $E$.
    \end{theo}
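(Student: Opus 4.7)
The plan is to transport the combinatorial complex $\Ish^l_\sigma$ to cohomology of the Ishida complex on $\widetilde{X}$ with supports along $E$, dualize via Ishida duality $\Ish^{n-p}\simeq \bfR\SheafHom(\duBois^p,\omega)$ to Du Bois cohomology on $E$, and then extract the class $c_1(\cO_E(-E))$ from a residue/cotangent-type triangle along $E\subset\widetilde{X}$.

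First, from Ishida's explicit construction of $\Ish^l_X$ as a complex of sheaves built from structure sheaves of $T$-orbit closures, one verifies by inspection that the weight-zero part (for the $T$-action) of local cohomology at the torus fixed point $x_\sigma$ recovers the combinatorial complex:
\[
H^i(\Ish^l_\sigma)\;\cong\;\bigl[H^i_{\{x_\sigma\}}(X,\Ish^l_X)\bigr]_{\mathbf{0}}.
\]
Proper base change for the toric morphism $\pi\colon\widetilde{X}\to X$, combined with the direct-image formula $\bfR\pi_*\Ish^l_{\widetilde{X}}\simeq \Ish^l_X$ (which follows from Ishida's cone-by-cone construction together with Grothendieck duality for $\pi$, in the spirit of the computations in \cite{LCDTV1}), then yields
\[
H^i(\Ish^l_\sigma)\;\cong\;\bigl[H^i_E(\widetilde{X},\Ish^l_{\widetilde{X}})\bigr]_{\mathbf{0}}.
\]

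Next, since $i\colon E\hookrightarrow \widetilde{X}$ is a Cartier divisor into a Cohen--Macaulay variety, $i^!\omega_{\widetilde{X}}\simeq \omega_E[-1]$, and combining Ishida duality on $\widetilde{X}$ with Grothendieck duality along $E$ identifies the local cohomology upstairs with a shift of $\bfR\Gamma(E, \bfR\SheafHom(\cF^\bullet, \omega_E))$ for an appropriate complex $\cF^\bullet$ on $E$ built from $\duBois^{n-l}_{\widetilde{X}}$. The geometric input is a distinguished triangle on $E$ relating $\cF^\bullet$ to $\duBois^{n-l}_E$ and $\duBois^{n-l-1}_E$ with extension class $c_1(\cO_E(-E))$, namely the natural analog for reflexive Du Bois complexes of the classical residue sequence $0\to \Omega^p_E \to \Omega^p_{\widetilde{X}}(\log E)|_E \to \Omega^{p-1}_E\to 0$ on a smooth pair. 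Dualizing this triangle via $\bfR\SheafHom(-,\omega_E)$, converting Du Bois complexes back to Ishida complexes through $\Ish^m_E\simeq \bfR\SheafHom(\duBois^{n-1-m}_E,\omega_E)$, taking hypercohomology on $E$, and invoking Serre duality on the projective toric variety $E$ produces exactly the claimed long exact sequence, with the connecting map $c_1^\vee$ arising as the Serre dual of cup product with $c_1(\cO_E(-E))$.

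The main obstacle is constructing the residue-type triangle for reflexive (log) Du Bois complexes along the torus-invariant prime Cartier divisor $E$ and identifying its extension class with $c_1(\cO_E(-E))$. In the smooth case this is classical; in the toric setting one must work with $\duBois^p_{\widetilde{X}}=\Omega^{[p]}_{\widetilde{X}}$, its log variants, and $\bfL i^*$. Fortunately, reflexive (log) differentials on toric varieties admit explicit $T$-equivariant descriptions in terms of the fan, so the triangle can be verified cone-by-cone via the Ishida construction, with the piecewise-linear support function cutting out $E$ manifestly producing $c_1(\cO_E(-E))$ as the connecting homomorphism.
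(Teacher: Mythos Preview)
Your first displayed identification
\[
H^i(\Ish^l_\sigma)\;\cong\;\bigl[H^i_{\{x_\sigma\}}(X,\Ish^l_X)\bigr]_{\mathbf{0}}
\]
is not correct, and the error derails the rest of the argument. The terms of $\Ish^l_X$ are structure sheaves $\cO_{S_\mu}$ of Cohen--Macaulay affine toric varieties, so $H^q_{\{x_\sigma\}}(\cO_{S_\mu})$ is nonzero only for $q=\dim S_\mu=n-d_\mu$. In the hypercohomology spectral sequence $E_1^{p,q}=H^q_{\{x_\sigma\}}\bigl((\Ish^l_X)^p\bigr)$ the nonzero entries therefore lie on the single antidiagonal $p+q=n$, all differentials vanish, and $H^i_{\{x_\sigma\}}(X,\Ish^l_X)$ is concentrated in degree $i=n$. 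But $H^i(\Ish^l_\sigma)$ is typically nonzero in several degrees (indeed the whole point of the theorem is to analyze these). The correct and much simpler statement is that $\Ish^l_\sigma$ is the weight-zero part of the complex of \emph{global sections} $\Gamma(X,\Ish^l_X)$, so $H^i(\Ish^l_\sigma)=\bigl[\HH^i(X,\Ish^l_X)\bigr]_{\mathbf 0}$; there is no local cohomology here. Once local cohomology is replaced by ordinary hypercohomology, your proper-base-change step to pass to $H^i_E(\widetilde X,\Ish^l_{\widetilde X})$ no longer applies, and the route through $\widetilde X$ collapses. (The auxiliary claim $\bfR\pi_*\Ish^l_{\widetilde X}\simeq\Ish^l_X$ is also not justified as written.)

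The paper avoids $\widetilde X$, local cohomology and base change entirely. After a $\ZZ$-linear change of coordinates one may take $\rho$ to be a coordinate ray, so that $\sigma$ is literally the cone $\varsigma\subset N_\RR\oplus\RR$ built from $E$ together with the ample $\QQ$-divisor $(-E)|_E$ as in \S3.5. The paper then constructs, for any $\QQ$-Cartier toric divisor $D$ on the projective toric variety $E$, an explicit short exact sequence of complexes $0\to\Ish^{p+1}_E\to\Ish^{p+1}_{E,D}\to\Ish^p_E\to 0$ (Corollary~\ref{coro:Gro-dual-Ishida-main-Q-div}), shows by hand via the Atiyah class on the total space of the line bundle that it is Grothendieck dual to the $c_1(D)$-extension of Du Bois sheaves, and finally observes that after taking global sections the middle complex is precisely $\Ish^{p+1}_\varsigma=\Ish^{p+1}_\sigma$ (Proposition~\ref{prop:exactness-of-Ish-and-lefschetz} and Corollary~\ref{coro:connecting-map-is-Lefschetz}). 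The long exact sequence of the theorem is then just the long exact sequence of this short exact sequence. So the ``residue-type triangle with extension class $c_1$'' that you flag as the main obstacle is exactly what is established, but directly on $E$ via the total space $L\to E$ rather than via the ambient $\widetilde X$.
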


    In \cite{LCDTV1}*{Theorem 1.3}, we prove that the last map of $\Ish^{p}_X$ is surjective for $p \geq n/2$. In particular,
    we have that $H^{p}(\Ish^p_\sigma) = 0$ for $p \geq n/2$. We can recover that result using Theorem \ref{theo:lefschetz-ishida-correct-ver}, along with the following Hard Lefschetz type injectivity result.

    \begin{prop}\label{prop:hard-lefschetz-injectivity}
        Let $X$ be an $n$-dimensional projective toric variety and $L$ be an ample $\QQ$-divisor on $X$. Then for $p \leq \frac{n-1}{2}$, the morphism
        $$ c_{1}(L) \colon H^{p}(X, \duBois_{X}^{p}) \to H^{p+1}(X, \duBois_{X}^{p+1})$$
        is injective.
    \end{prop}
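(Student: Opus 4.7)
The plan is to deduce the proposition from the classical Hard Lefschetz theorem for intersection cohomology of projective varieties (Beilinson--Bernstein--Deligne--Gabber), by constructing a natural, $c_1(L)$-equivariant injection
\[
H^p(X, \duBois^p_X) \hookrightarrow IH^{2p}(X, \CC)
\]
and then transferring injectivity. First, I would identify $H^p(X, \duBois^p_X)$ with the top-weight piece of $H^{2p}(X, \CC)$. By Danilov's classical result, the mixed Hodge structure on $H^k(X, \QQ)$ is of Hodge--Tate type for every complete toric variety, so only classes of type $(p,p)$ appear. Combined with the $E_1$-degeneration of the Hodge-to-Du Bois spectral sequence (valid for projective Du Bois varieties, and toric varieties are Du Bois since they have rational singularities), this yields
\[
H^p(X, \duBois^p_X) \;=\; \gr^F_p H^{2p}(X, \CC) \;=\; \gr^W_{2p} H^{2p}(X, \CC),
\]
and similarly $H^{p+1}(X, \duBois^{p+1}_X) = \gr^W_{2p+2} H^{2p+2}(X, \CC)$.

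To construct the embedding into intersection cohomology, I would use the natural morphism $\QQ_X \to \IC_X[-n]$, which induces a map of mixed Hodge structures $H^{2p}(X, \QQ) \to IH^{2p}(X, \QQ)$. Since $IH^{2p}(X,\QQ)$ is pure of weight $2p$, this map kills $W_{2p-1} H^{2p}(X, \QQ)$ and factors through $\gr^W_{2p}$. To verify that the induced map $\gr^W_{2p} H^{2p}(X) \to IH^{2p}(X)$ is injective, I would choose a toric resolution $\pi\colon \widetilde X \to X$. Deligne's theorem on weights identifies $\ker\bigl(\pi^*\colon H^{2p}(X) \to H^{2p}(\widetilde X)\bigr)$ with $W_{2p-1} H^{2p}(X)$, while the decomposition theorem (in its Saito-theoretic form) exhibits $IH^{2p}(X)$ as a direct summand of $H^{2p}(\widetilde X)$ in the category of mixed Hodge structures, with the natural map $H^{2p}(X) \to IH^{2p}(X)$ realized as the composition of $\pi^*$ with the projection onto this summand. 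Chasing these identifications produces the required injection, and cup product with $c_1(L)$ commutes with all the maps in play by naturality.

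The conclusion then follows from Hard Lefschetz for intersection cohomology applied to the ample $\QQ$-divisor $L$: the isomorphism $c_1(L)^{n-2p}\colon IH^{2p}(X, \CC) \xrightarrow{\sim} IH^{2n-2p}(X, \CC)$ forces $c_1(L)\colon IH^{2p}(X, \CC) \to IH^{2p+2}(X, \CC)$ to be injective whenever $2p \leq n-1$. By the $c_1(L)$-equivariance of the embedding constructed above, this immediately transfers to injectivity of $c_1(L)\colon H^p(X, \duBois^p_X) \to H^{p+1}(X, \duBois^{p+1}_X)$ in the stated range.

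The main technical obstacle is the middle step: rigorously establishing that the induced map on top-weight pieces $\gr^W_{2p} H^{2p}(X, \QQ) \to IH^{2p}(X, \QQ)$ is injective. Conceptually this follows from the decomposition theorem combined with Deligne's weight arguments, but pinning down the compatibility between the natural map $\QQ_X \to \IC_X[-n]$ and the $\IC_X$-component of the Saito decomposition of $R\pi_* \QQ_{\widetilde X}[n]$ is the delicate point; everything else in the argument is formal once this injection is in hand.
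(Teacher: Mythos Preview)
Your strategy is the same as the paper's: embed $\gr_F^p H^{2p}(X)$ into $IH^{2p}(X)$ compatibly with $c_1(L)$ and then invoke Hard Lefschetz for intersection cohomology. The one difference is that the paper dispatches precisely the ``main technical obstacle'' you flag by quoting Weber's theorem (\cite{Weber}*{Theorem 1.8}, see also \cite{Popa-Park:lefschetz}*{Remark 6.5}), which asserts directly that $\ker\bigl(H^{2p}(X,\QQ)\to IH^{2p}(X,\QQ)\bigr)=W_{2p-1}H^{2p}(X,\QQ)$; combined with the Hodge--Tate property this gives $\gr_F^p W_{2p-1}H^{2p}(X,\QQ)=0$ and hence the desired injectivity on $\gr_F^p$, without needing to chase compatibilities through a resolution and the decomposition theorem.
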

    
    To illustrate the utility of Theorem \ref{theo:lefschetz-ishida-correct-ver}, we give a simple characterization of the local cohomological defect for 4-dimensional toric varieties. By \cite{Dao-Takagi}, the local cohomological defect can either be $0$ or $1$ in this case.

    \begin{coro} \label{coro:lcdef-of-4dim}
        Let $X$ be a 4-dimensional affine toric variety associated to a full-dimensional cone $\sigma$. Consider a rational ray $\rho$ in the interior of $\sigma$. Consider the associated toric morphism $\pi \colon \widetilde{X} \to X$ and let $E$ be the inverse image of the torus fixed point. Then $\lcdef(X) = 1$ if and only if $\dim H^{2}(E,\mathbb{C}) \geq 2$.
    \end{coro}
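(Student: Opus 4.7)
The plan is to reduce the equivalence to vanishing of a few cohomology groups of Ishida complexes of $\sigma$, and then to apply Theorem \ref{theo:lefschetz-ishida-correct-ver} to translate them into cohomological data of the divisor $E$.

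Since every proper face $\tau \subsetneq \sigma$ has dimension at most three, the Dao--Takagi bound gives $\lcdef(\tau) = 0$, so Proposition \ref{prop:lcdef-variety-in-terms-of-cone} yields $\lcdef(X) = \lcdef(\sigma)$. A second application of Dao--Takagi gives $\lcdef(\sigma) \leq 1$, making condition (2) of the definition of $\lcdef$ at $c = 1$ automatic. Thus $\lcdef(\sigma) = 1$ is equivalent to $H^{j+1}(\Ish^{4-j}_\sigma) \neq 0$ for some $j \geq 0$, and since $\Ish^l_\sigma$ lives in cohomological degrees $[0, l]$, only $H^1(\Ish^4_\sigma)$ and $H^2(\Ish^3_\sigma)$ can contribute. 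It therefore suffices to show $H^1(\Ish^4_\sigma) = 0$ always, and $H^2(\Ish^3_\sigma) \neq 0$ if and only if $\dim H^2(E, \CC) \geq 2$.

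Applying Theorem \ref{theo:lefschetz-ishida-correct-ver} with $l = 4$: since $\dim E = 3$ forces $\Ish^4_E = 0$, the long exact sequence collapses to $H^1(\Ish^4_\sigma) \cong \HH^1(E, \Ish^3_E)$, and Grothendieck duality identifies this with $H^2(E, \mathcal{O}_E)^\vee$, which vanishes on any complete toric variety. Applying the theorem with $l = 3$, the vanishings $\HH^2(E, \Ish^3_E) = H^1(E, \mathcal{O}_E)^\vee = 0$ and $\HH^3(E, \Ish^3_E) = H^0(E, \mathcal{O}_E)^\vee = \CC$ reduce the relevant segment of the long exact sequence to
$$0 \longrightarrow H^2(\Ish^3_\sigma) \longrightarrow \HH^2(E, \Ish^2_E) \xrightarrow{\ c_1\dual\ } \CC.$$

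Grothendieck duality identifies $\HH^2(E, \Ish^2_E)$ with $H^1(E, \duBois^1_E)^\vee$, and the $E_1$-degenerating Hodge filtration on $H^2(E, \CC)$ has graded pieces $H^2(E, \mathcal{O}_E)$, $H^1(E, \duBois^1_E)$, $H^0(E, \duBois^2_E)$; the outer two vanish on the complete toric variety $E$ (the last because a complete toric variety admits no non-zero global reflexive $p$-forms for $p \geq 1$), so $\dim H^1(E, \duBois^1_E) = \dim H^2(E, \CC)$. The map $c_1\dual$ is dual to $c_1 \colon \CC = H^0(E, \mathcal{O}_E) \to H^1(E, \duBois^1_E)$ sending $1$ to $c_1((-E)|_E)$. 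Since $\pi$ contracts $E$ to a point, $-E|_E$ is $\pi$-ample, hence ample on $E$, so this class is non-zero and $c_1\dual$ is surjective. Therefore $\dim H^2(\Ish^3_\sigma) = \dim H^2(E, \CC) - 1$, which yields the claimed equivalence. The main technical ingredients, beyond Theorem \ref{theo:lefschetz-ishida-correct-ver}, are the toric vanishings above and the ampleness of $-E|_E$ on the contracted exceptional divisor, both of which are standard for toric star subdivisions.
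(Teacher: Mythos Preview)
Your proof is correct and follows essentially the same route as the paper's: both reduce to showing $H^2(\Ish^3_\sigma)\neq 0$ and then apply Theorem~\ref{theo:lefschetz-ishida-correct-ver} with $l=3$, identifying $\HH^2(E,\Ish^2_E)$ with $H^2(E,\CC)$ via the same vanishings of $H^i(E,\cO_E)$ and $H^0(E,\duBois^2_E)$. The only notable difference is how the surjectivity of $c_1^\vee\colon \HH^2(E,\Ish^2_E)\to\CC$ is established: the paper obtains it from $H^3(\Ish^3_\sigma)=0$ (quoting \cite{LCDTV1}*{Theorem 1.3}), whereas you argue directly that $-E|_E$ is ample and hence $c_1$ is injective. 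Both are valid; your argument avoids the external reference at the cost of invoking ampleness of the conormal bundle of the exceptional divisor, while the paper's argument keeps everything inside the Ishida formalism. Your explicit treatment of $H^1(\Ish^4_\sigma)=0$ via the $l=4$ case is also fine, though this vanishing is automatic since $\Ish^n_X\simeq\omega_X$ is a sheaf.
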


    As a consequence of Corollary \ref{coro:lcdef-of-4dim}, we immediately get the following example which shows that the local cohomological defect of an affine toric variety is not a combinatorial invariant of the associated cone.
	\begin{exam} \label{exam:lcdef-non-combinatorial}
		Let $\sigma$ be the convex cone in $\RR^{4}$ generated by the following 14 rays:
		\begin{align*}
			&(1,0,0,1), (-1,0,0,1), (0,-1,0,1), (0,1,0,1), (0,0,1,1),\\
			& (0,0,-1,1), (1,1,1,2) ,(-1,1,1,2), (1,-1,1,2), (-1,-1,1,2),\\
			& (1,1,-1,2), (-1,1,-1,2), (1,-1,-1,2), (-1,-1,-1,2).
		\end{align*}
		Let $\sigma'$ be the convex cone in $\RR^{4}$ generated by the following 14 rays:
		\begin{align*}
			& (1,0,0,1) , (0,1,0,1), (-1,0,1,2), (-1,0,0,1), (0,-1,0,1),\\ 
			& (0,0,-1,1), (2,3,1,5), (1,1,-1,2), (2,-3,1,5), (1,-1,-1,2),\\
			& (-2,1,1,3), (-1,1,-1,2), (-2,-1,1,3), (-1,-1,-1,2)
		\end{align*}
		Let $X$ and $X'$ be the toric varieties corresponding to $\sigma$ and $\sigma'$ respectively. The two cones have the same combinatorial data, however $\lcdef(X) = 1$ and $\lcdef(X') = 0$. While one can directly check this using Macaulay2, Corollary \ref{coro:lcdef-of-4dim} provides a more conceptual reason since this example essentially comes from \cite{CoxLittleSchenck-ToricVar}*{Exercise 12.3.11}, which consists of two projective toric threefolds $E$ and $E'$ having the same combinatorial data, but $\dim H^2(E,\mathbb{C}) = 2$ while $\dim H^2(E',\mathbb{C}) = 1$.
	\end{exam}

    \subsection{Some combinatorial results about the local cohomological defect}
    Even though Example \ref{exam:lcdef-non-combinatorial} establishes that the local cohomological defect is not a combinatorial invariant, in this subsection we state a few results regarding the local cohomological defect which have a more combinatorial nature. More specifically, we prove that various classes of affine toric varieties which come from combinatorially similar cones have the same local cohomological defect. This suggests that even though the $\lcdef$ is not a combinatorial invariant, there is still plenty of scope to study it using combinatorics.

    The following theorem allows us to give a huge class of examples of $n$-dimensional toric varieties with local cohomological defect from $0$ to $n-3$. Roughly speaking, it states that the local cohomological defect is unchanged if we take a `pyramid', i.e., if we add a new ray in a linearly independent direction. 
	
	\begin{theo} \label{theo:adding-new-ray}
		Let $\sigma \subset N \otimes \mathbb{R}$ be a full dimensional cone of dimension $n$. Let $\widetilde{N} = N \oplus \ZZ e_{n+1}$ and let $\rho$ be a ray in $\widetilde{N}$ which is not contained in $N$. Let $\prescript{\rho}{}{\sigma}:= \mathrm{span}_{\RR\geq 0}(\rho, \sigma) \subset \widetilde{N}$. Let $X$ and $\widetilde{X}$ be the affine toric varieties associated to $\sigma$ and $\widetilde{\sigma}$ respectively. Then we have $\lcdef(X) =\lcdef(\widetilde{X})$.
	\end{theo}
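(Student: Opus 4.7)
The plan is to reduce the theorem to a statement about a single cone via Proposition \ref{prop:lcdef-variety-in-terms-of-cone}, and then to prove the cone-level claim by an explicit comparison of Ishida complexes. First, by that proposition, $\lcdef(X) = \max_{\tau \preceq \sigma} \lcdef(\tau)$ and $\lcdef(\widetilde{X}) = \max_{\widetilde{\tau} \preceq \prescript{\rho}{}{\sigma}} \lcdef(\widetilde{\tau})$. Every face of $\prescript{\rho}{}{\sigma}$ is either a face of $\sigma$ itself (the faces not containing $\rho$) or of the form $\prescript{\rho}{}{\gamma}$ for some face $\gamma \preceq \sigma$ (the faces containing $\rho$; the case $\gamma = \{0\}$ being the smooth ray $\rho$, with $\lcdef = 0$). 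Hence the theorem reduces to the cone-level assertion $\lcdef(\prescript{\rho}{}{\gamma}) = \lcdef(\gamma)$ for every face $\gamma \preceq \sigma$; by renaming it suffices to prove this with $\gamma$ in the role of $\sigma$.

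The key technical input I would aim for is a decomposition at the level of cohomologies,
\[ H^k(\Ish^m_{\prescript{\rho}{}{\sigma}}) \;\cong\; H^k(\Ish^m_\sigma) \oplus H^k(\Ish^{m-1}_\sigma) \qquad (k \geq 0,\; m \geq 0), \]
with the convention $\Ish^{-1} = 0$. The motivating special case is when $\rho$ is a lattice direct summand of $\widetilde{N}$, so that $\widetilde{X} = X \times \mathbb{A}^1$, where this arises from a K\"unneth-type decomposition of reflexive differentials combined with Grothendieck duality. In general, I would partition the faces of $\prescript{\rho}{}{\sigma}$ by whether they contain $\rho$, producing a short exact sequence of complexes
\[ 0 \to \Ish^m_\sigma \to \Ish^m_{\prescript{\rho}{}{\sigma}} \to \Ish^{m-1}_\sigma \to 0, \]
in which the subcomplex comes from the faces of $\sigma$ and the quotient from the pyramidal faces $\prescript{\rho}{}{\gamma'}$. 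I would then construct a chain-level splitting by choosing a basis of $\widetilde{N}$ adapted to the decomposition $\widetilde{N} = N \oplus \ZZ e_{n+1}$ and exploiting the resulting splitting of the quotient lattices $\widetilde{M}(\prescript{\rho}{}{\gamma'})$ appearing in each Ishida term.

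Given the cohomological decomposition, the equality of $\lcdef$s follows formally. Rewriting the definition one has $\lcdef(\sigma) = \max\{\,k+m-n : H^k(\Ish^m_\sigma) \neq 0\,\}$ and correspondingly for $\prescript{\rho}{}{\sigma}$ with $n$ replaced by $n+1$. Substituting the decomposition and reindexing $m' = m-1$ in the second summand, the contribution from the $H^k(\Ish^m_\sigma)$ factor to $\lcdef(\prescript{\rho}{}{\sigma})$ is $\lcdef(\sigma) - 1$, while the contribution from the $H^k(\Ish^{m-1}_\sigma)$ factor is exactly $\lcdef(\sigma)$, so the maximum is $\lcdef(\sigma)$, as required.

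The principal obstacle is establishing the cohomological decomposition, or equivalently, showing that the short exact sequence of complexes splits. The sequence itself is obtained essentially by bookkeeping on faces of the pyramid, but the splitting requires verifying that the Ishida differentials do not mix nontrivially between the base and apex contributions. A basis of $\widetilde{N}$ adapted to $\rho$ and $N$ should make the relevant lattice quotients split as direct sums of the base quotients with a one-dimensional piece coming from $\rho$, producing the required chain-level section; carrying this out cleanly is where I expect the bulk of the technical work to lie.
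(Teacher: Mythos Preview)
Your reduction to a cone-level statement via Proposition~\ref{prop:lcdef-variety-in-terms-of-cone} is fine, and the idea of partitioning the faces of $\prescript{\rho}{}{\sigma}$ into ``old'' faces and ``pyramidal'' faces $\prescript{\rho}{}{\gamma}$ is exactly right. However, the short exact sequence you write down does not exist, and the cohomological decomposition you are aiming for is false. First, the direction is wrong: since every face of $\sigma$ is contained in a pyramidal face (namely $\gamma\subset \prescript{\rho}{}{\gamma}$) but never conversely, the pyramidal part is the \emph{subcomplex} and the faces-of-$\sigma$ part is the \emph{quotient}. Second, even after reversing the arrows, the quotient complex is not $\Ish^m_\sigma$: its $l$-th term is $\bigoplus_{\gamma\in\cP_l}\bigwedge^{m-l}\gamma^\perp_{\widetilde V}$, where the orthogonal is taken in the larger space $\widetilde V$, so each summand is one dimension fatter than in $\Ish^m_\sigma$. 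A direct dimension count in degree~$1$ already rules out your SES. Concretely, for $\sigma=\RR_{\ge0}\subset\RR$ and $\rho=e_2$ one computes $H^0(\Ish^1_{\prescript{\rho}{}{\sigma}})=0$, while $H^0(\Ish^1_\sigma)\oplus H^0(\Ish^0_\sigma)=0\oplus\RR\neq 0$; so the K\"unneth intuition from $X\times\bb{A}^1$ does not survive passage to the degree-zero (cone) Ishida complex.

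What actually happens, and what the paper proves, is stronger and subtler. Writing $0\to S^\bullet\to \Ish^m_{\prescript{\rho}{}{\sigma}}\to Q^\bullet\to 0$ with $S^\bullet$ the pyramidal subcomplex, one has $S^\bullet\simeq \Ish^{m-1}_\sigma[-1]$, while the quotient $Q^\bullet$ decomposes (after rescaling each summand by the lattice indices $a_\gamma$ of Definition~\ref{defi:a_mu}) as $Q_1^\bullet\oplus Q_2^\bullet\simeq \Ish^{m-1}_\sigma\oplus \Ish^m_\sigma$. Crucially the sequence does \emph{not} split: the connecting homomorphism $\delta\colon H^i(Q^\bullet)\to H^{i+1}(S^\bullet)$ is an isomorphism on the $Q_1$-summand and zero on the $Q_2$-summand. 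The two copies of $\Ish^{m-1}_\sigma$ therefore cancel, and one obtains $H^i(\Ish^m_{\prescript{\rho}{}{\sigma}})\cong H^i(\Ish^m_\sigma)$, from which $\lcdef(\prescript{\rho}{}{\sigma})=\lcdef(\sigma)$ is immediate. So the ``mixing'' you hoped to rule out is precisely the mechanism that makes the argument work.
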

    Given an $m$ with $0 \leq m \leq n-3$, if we start with an $(m+3)$-dimensional non-simplicial toric variety with isolated non-simplicial locus, then \cite[Theorem 1.4]{LCDTV1} tells us that its lcdef is equal to $m$. Then by a repeated application of Theorem \ref{theo:adding-new-ray}, one can obtain a huge class of $n$-dimensional toric varieties with local cohomological defect equal to $m$.


    We end the subsection by stating some results specific to dimension 4. First, we have the following simple observation.

    \begin{prop} \label{prop:Euler-char-lcdef}
        Let $\sigma$ be a 4-dimensional full dimensional cone whose number of 3-dimensional faces is strictly larger than the number of 1-dimensional faces. Then $\lcdef(X) = 1$ where $X$ is the affine toric variety associated to $\sigma$.
    \end{prop}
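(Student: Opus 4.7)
The plan is to invoke Corollary~\ref{coro:lcdef-of-4dim} and then carry out a direct Euler characteristic computation on the exceptional three-fold $E$. Fix any rational ray $\rho$ in the interior of $\sigma$ and let $\pi\colon \widetilde{X}\to X$ be the star subdivision inserting $\rho$. Then $E = \pi^{-1}(\text{torus fixed point})$ is a projective toric $3$-fold whose fan $\Sigma(E)$ lies in the quotient $(N\otimes\RR)/\RR u_\rho$, and whose cones are in dimension-preserving bijection with the proper faces of $\sigma$. Writing $f_k$ for the number of $k$-dimensional faces of $\sigma$, the fan $\Sigma(E)$ therefore has $f_1$ rays and $f_3$ maximal cones, and the hypothesis reads $f_3 > f_1$. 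By Corollary~\ref{coro:lcdef-of-4dim} it suffices to show that $\dim H^2(E, \CC) \geq 2$.

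Two classical facts about complete toric varieties (both in Fulton's \emph{Introduction to Toric Varieties}, Chapter~5) supply the main input. First, the odd rational cohomology of $E$ vanishes and $\chi(E)$ equals the number of torus fixed points, i.e.\ the number of maximal cones of $\Sigma(E)$. Combined with $h^0(E) = h^6(E) = 1$, this yields
\[
h^2(E, \CC) + h^4(E, \CC) \;=\; \chi(E) - 2 \;=\; f_3 - 2.
\]
Second, the cycle class map gives a rational isomorphism $A_k(Y)_\QQ \cong H_{2k}(Y,\QQ)$ for any complete toric variety $Y$. Applied to $Y = E$ with $k = 2$ and dualized, this identifies $h^4(E,\CC)$ with $\dim A_2(E)_\QQ$. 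Since $\Sigma(E)$ is complete in a $3$-dimensional space, the standard sequence
\[
0 \to M(E)\otimes\QQ \to \bigoplus_{\rho' \in \Sigma(E)(1)} \QQ\cdot D_{\rho'} \to A_2(E)\otimes\QQ \to 0
\]
has injective left map, so $h^4(E, \CC) = f_1 - 3$.

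Subtracting the two identities gives $h^2(E, \CC) = f_3 - f_1 + 1 \geq 2$, and Corollary~\ref{coro:lcdef-of-4dim} then yields $\lcdef(X) = 1$. The argument is essentially a counting exercise and I expect no serious obstacle; the only non-elementary ingredients are Fulton's Chow-to-homology isomorphism and the vanishing of odd rational cohomology, both of which hold for arbitrary (not necessarily simplicial) complete toric varieties.
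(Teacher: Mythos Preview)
Your approach via Corollary~\ref{coro:lcdef-of-4dim} is reasonable, but there is a genuine error: the assertion that odd rational cohomology of a complete toric variety vanishes is \emph{false} when the fan is not simplicial, and Fulton's Chapter~5 only treats the simplicial case. A quick sanity check: take $\sigma$ to be the cone over a cube, so $f_1=8$ and $f_3=6$; your formula would give $h^2(E)=f_3-f_1+1=-1$. More to the point, the paper's own Example~\ref{exam:lcdef-non-combinatorial} exhibits combinatorially equivalent $E,E'$ with $h^2(E)=2$ and $h^2(E')=1$; since $\chi(E)=f_3$ and $h^4(E)=f_1-3$ are combinatorially determined, $h^3$ must be positive for at least one of them. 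The cycle-map justification of $h^4=f_1-3$ is likewise not in Fulton for singular $E$, though the conclusion itself is correct---it follows directly from the Hodge--Tate structure of $H^4(E)$ together with $H^{>0}(E,\cO_E)=0$.

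The argument is easily repaired: one checks $h^1(E)=h^5(E)=0$ (again via the Hodge--Tate property and vanishing of $H^i(\cO_E)$), so the Euler characteristic gives $h^2-h^3=f_3-f_1+1$, and since $h^3\ge 0$ you still obtain $h^2\ge f_3-f_1+1\ge 2$. By contrast, the paper bypasses Corollary~\ref{coro:lcdef-of-4dim} entirely: it computes the Euler characteristic of the four-term complex $\Ish_\sigma^3$ directly as $4-3f_1+2f_2-f_3=f_3-f_1$ (using $f_1-f_2+f_3=2$), and since $H^0(\Ish_\sigma^3)=H^3(\Ish_\sigma^3)=0$ this equals $\dim H^2-\dim H^1$, forcing $H^2(\Ish_\sigma^3)\neq 0$ when $f_3>f_1$. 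The two arguments are really the same Euler-characteristic count seen through different lenses, but the paper's version avoids the non-simplicial subtleties you ran into.
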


    Next, we use the notion of shelling of a cone to prove the following results.

    \begin{theo}\label{theo:shelling_contains_vertex}
        Let $\sigma$ be a 4-dimensional full-dimensional cone which admits a shelling order $f_{1},\ldots, f_{r}$ such that for $i \neq r-1, r$, $$ f_{i} \setminus \bigcup_{j < i} f_{j}$$ contains a ray. Then $\lcdef(X) = 0$ where $X$ is the associated affine toric variety.
    \end{theo}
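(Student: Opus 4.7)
The plan is to apply Corollary~\ref{coro:lcdef-of-4dim}, reducing the statement to showing $\dim_{\mathbb{C}}H^{2}(E,\mathbb{C})\le 1$, where $E$ is the projective toric $3$-fold obtained as the preimage of the torus fixed point under the toric star-subdivision of $\sigma$ at an interior rational ray. Since $H^{2}(E,\mathbb{C})\cong\Pic(E)\otimes\mathbb{C}$ for a complete toric variety, it suffices to show $\dim \Pic(E)\otimes\mathbb{C}\le 1$.

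The fan $\Sigma_{E}$ of $E$ has cones in bijection with the proper faces of $\sigma$, so the shelling $f_{1},\dots,f_{r}$ descends to a shelling of the maximal cones of $\Sigma_{E}$. We have $\dim\Pic(E)\otimes\mathbb{C}=\dim W-3$, where $W$ is the space of piecewise linear functions on $\Sigma_{E}$. I would compute $\dim W$ inductively along the shelling: letting $W_{i}$ denote the piecewise linear functions on the subfan generated by the first $i$ facets, a restriction--extension analysis yields the recursion
$$\dim W_{i}-\dim W_{i-1}\;=\;3-\dim\bigl(V_{i-1}+U_{i}\bigr),$$
where $I_{i}:=f_{i}\cap\bigcup_{j<i}f_{j}$, $U_{i}\subseteq\mathbb{C}^{|I_{i}|}$ is the image of evaluation $M_{\mathbb{C}}\to\mathbb{C}^{|I_{i}|}$ at the rays of $I_{i}$, and $V_{i-1}\subseteq\mathbb{C}^{|I_{i}|}$ is the image of the restriction of $W_{i-1}$. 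Since $\dim W_{1}=3$, the goal is to establish $\sum_{i=2}^{r}\bigl(3-\dim(V_{i-1}+U_{i})\bigr)=1$, i.e., $\dim W=4$.

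The new-ray hypothesis is the input that pins down this alternating sum. Individual step contributions can be $+1$ (at ``weakly attached'' steps with $|I_{i}|=2$, e.g. step $i=2$ or potentially later simplicial attachments), $0$ (when the rays of $I_i$ span $\mathbb{R}^3$ and $V_{i-1}\subseteq U_i$), or $-1$ (at closing steps where $V_{i-1}$ has values beyond those arising from a single linear form on $f_i$). The hypothesis constrains the distribution of new rays along the shelling, forcing any excess positive contributions from intermediate steps to be balanced exactly by negative contributions at the final two steps $i=r-1,r$, where the accumulated piecewise linear data is over-determined by compatibility on the shared rays of $f_{r-1}$ and $f_{r}$. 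Summing, one obtains $\dim W=4$, whence $\dim\Pic(E)\otimes\mathbb{C}=1$.

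The main obstacle is carrying out this cancellation analysis in general. The recursion itself is elementary, but proving that the new-ray hypothesis yields precisely $\dim W=4$ requires careful combinatorial bookkeeping: one must track, at each step, how $V_{i-1}$ sits inside $\mathbb{C}^{|I_i|}$ relative to $U_i$, and show that the cumulative ``excess'' parameters introduced at any weakly-attached intermediate steps are precisely absorbed by the linearity constraints at the two closing facets. The topological structure of the shelling (each $I_{i}$ is an arc in $\partial f_{i}$, with $I_{r}=\partial f_{r}$) combined with the restriction on the distribution of new rays is what makes this delicate balance work out to exactly $1$.
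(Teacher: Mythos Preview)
Your reduction via Corollary~\ref{coro:lcdef-of-4dim} to showing $\dim\Pic(E)\otimes\mathbb{C}\le 1$ is legitimate (using $H^{2}(E,\mathbb{Z})\cong\Pic(E)$ for complete toric varieties), and setting up an induction on piecewise linear functions along the shelling is a reasonable translation. But the proof has a genuine gap at exactly the point you identify as ``the main obstacle'': you never actually run the cancellation analysis. The claim that positive contributions at weakly attached steps are ``balanced exactly'' by negative contributions at steps $r-1,r$ is asserted, not proved. In particular, you do not explain how the hypothesis that $f_{i}\setminus\bigcup_{j<i}f_{j}$ contains a ray constrains $\dim(V_{i-1}+U_{i})$: the new ray lies outside $I_{i}$, so its presence says nothing direct about the restriction maps to $\mathbb{C}^{|I_i|}$. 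Without that link, the recursion is just bookkeeping with no input from the hypothesis.

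The paper avoids this detour entirely by filtering $\Ish_{\sigma}^{3}$ itself by the shelling, rather than passing to $E$. Writing $F^{k}$ for the subcomplex supported on faces not contained in $\bigcup_{i\le k}f_{i}$, the graded piece $F^{i-1}/F^{i}$ for $i\le r-2$ has a nonzero term in cohomological degree $1$ precisely because step $i$ introduces a new ray; this is what forces $H^{2}(F^{i-1}/F^{i})=0$ and hence $H^{2}(F^{0}/F^{r-2})=0$ via the spectral sequence. The residual complex $F^{r-2}$ is then concretely identified: since $\bigcup_{i\le r-2}f_{i}$ is a disk, $f_{r-1}$ and $f_{r}$ are adjacent along a single $2$-face $e$, so $F^{r-2}$ is just $V_{e}^{3}\to V_{f_{r-1}}^{3}\oplus V_{f_{r}}^{3}$ in degrees $2,3$, which is visibly exact. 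This is where the new-ray hypothesis does its work cleanly: it guarantees a degree-$1$ term in each intermediate graded piece, which is the mechanism your recursion is missing. If you want to salvage your approach, you would need to show that the new ray at step $i$ forces the extension of PL functions from $W_{i-1}$ to $W_{i}$ to be unique (contribution $0$) for $2\le i\le r-2$, and then handle the last two steps by hand; but at that point you are essentially redoing the Ishida-complex argument in dual language.
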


    \begin{theo}\label{theo:lcdef-of-not-a-pyramid-dim-4}
    Let $\sigma \subset N \otimes \mathbb{R}$ be a 4-dimensional cone which has a ray $\tau_0$ such that every facet (i.e. codimension $1$ face of $\sigma$) containing $\tau_0$ is simplicial. Assume additionally that the vector space spanned by all the other rays $\{\tau \subset\sigma \mid \dim(\tau) = 1,  \tau \neq \tau_0 \}$ is $N \otimes \mathbb{R}$. Then $\lcdef(X) = 1$ where $X$ is the associated affine toric variety.
    \end{theo}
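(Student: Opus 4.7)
Since $X$ is $4$-dimensional, \cite{Dao-Takagi} gives $\lcdef(X) \le 1$, so it suffices to prove $\lcdef(X) \ge 1$. By Corollary \ref{coro:lcdef-of-4dim}, this is equivalent to exhibiting a rational ray $\rho$ in the interior of $\sigma$ such that the exceptional projective toric $3$-fold $E$ satisfies $\dim_\CC H^2(E, \CC) \ge 2$.

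Pick $\rho$ to be a generic rational ray in the interior of $\sigma$, chosen so that $-E$ is $\pi$-ample (a standard feature of toric star subdivisions at an interior ray). The fan $\Sigma_E$ of $E$ in $N_\RR/\RR\rho \cong \RR^3$ is combinatorially isomorphic to $\partial \sigma$. Writing $\tau_0'$ for the image of $\tau_0$, the hypotheses translate to: (a) the star of $\tau_0'$ in $\Sigma_E$ is a complete simplicial $2$-dimensional fan (so $D_{\tau_0'}$ is $\QQ$-Cartier on $E$); (b) the rays $\tau' \ne \tau_0'$ span $N_\RR/\RR\rho$. A short convexity argument shows that (b) implies $\sigma$ is not a pyramid with apex $\tau_0$, so at least two facets of $\sigma$ avoid $\tau_0$. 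Since the complement of the open star of $\tau_0$ in the topological $2$-sphere $\partial \sigma$ is a connected closed disk, the facet-adjacency graph restricted to facets of $\sigma$ not containing $\tau_0$ is connected; hence (c): there exist two maximal cones $\mu_1, \mu_2$ of $\Sigma_E$ disjoint from $\tau_0'$ and sharing a common $2$-face.

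Since $H^2(E, \CC) = H^{1,1}(E)$ for toric $E$, it suffices to exhibit two linearly independent classes in this space; I take $[D_{\tau_0'}]$ and $c_1((-E)|_E)$. The first is represented by the PL function $\phi_1$ on $\Sigma_E$ with $\phi_1(\tau_0') = 1$ and $\phi_1(\tau') = 0$ otherwise, which is PL by (a) and nontrivial in $\text{Pic}(E)_\QQ$ by (b), since no linear functional can vanish on a spanning set while taking the value $1$ on a further ray. The second class is nonzero because $(-E)|_E$ is ample on $E$. For linear independence, suppose $c_1((-E)|_E) = c [\phi_1]$ in $H^2$, so $(-E)|_E - c \phi_1 = \ell$ for some $\ell \in M_\RR$. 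For each $i = 1, 2$, evaluating on the rays of $\mu_i$ (all different from $\tau_0'$) yields $\ell = (-E)|_E|_{\mu_i}$ as linear functions on $\RR^3$. Hence $(-E)|_E|_{\mu_1} = (-E)|_E|_{\mu_2}$, contradicting the strict convexity of $(-E)|_E$ across the $2$-face shared by $\mu_1$ and $\mu_2$ (which follows from ampleness). Thus $[\phi_1]$ and $c_1((-E)|_E)$ span a $2$-dimensional subspace of $H^2(E, \CC)$.

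The most delicate step is producing the adjacent maximal cones in (c): this uses both the non-pyramid consequence of (b) (giving at least two facets missing $\tau_0$) and the connectedness of the ``opposite hemisphere'' of $\partial \sigma$ to upgrade ``two facets'' to ``two adjacent facets''. The rest of the argument is almost formal once (a) establishes the validity of $\phi_1$, (b) its nontriviality, and (c) the clash with strict convexity. A subsidiary point is the identification $H^2(E, \CC) = H^{1,1}(E)$ for toric $E$, which follows from the vanishings $H^q(E, \mathcal O_E) = 0$ and $H^0(E, \Omega_E^{[p]}) = 0$ for $q, p > 0$ on complete toric varieties.
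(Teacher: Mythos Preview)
Your argument is correct and takes a genuinely different route from the paper's proof. The paper works directly with the Ishida complex: it chooses a shelling of $\sigma$ beginning with the facets $f_1,\dots,f_k$ containing $\tau_0$, computes the Euler characteristic of the quotient complex $F^0/F^k$ to force $H^2(F^0/F^k)\neq 0$, and then uses the spanning hypothesis to show $H^3(F^k)=0$, whence $H^2(\Ish^3_\sigma)\neq 0$. Your proof instead invokes Corollary~\ref{coro:lcdef-of-4dim} and produces two linearly independent $\QQ$-Cartier divisor classes on the exceptional threefold $E$: the ample class $(-E)|_E$ and the class $[D_{\tau_0'}]$, whose $\QQ$-Cartierness comes from the simpliciality hypothesis, whose nontriviality comes from the spanning hypothesis, and whose independence from the ample class comes from the existence of two adjacent maximal cones avoiding $\tau_0'$ (extracted via the connectedness of the antistar). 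The injection $\Pic(E)_\QQ\hookrightarrow H^2(E,\QQ)$, which you need to pass from divisor classes to cohomology classes, follows from $H^1(E,\cO_E)=0$.

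Your approach is more geometric and nicely illustrates the utility of Corollary~\ref{coro:lcdef-of-4dim}; it also makes transparent exactly how each hypothesis is consumed (simpliciality $\Rightarrow$ $\QQ$-Cartier; spanning $\Rightarrow$ nontriviality and existence of the adjacent pair). The paper's approach is more self-contained, bypassing the Lefschetz machinery entirely and staying at the level of finite-dimensional linear algebra with the Ishida complex. Two minor remarks: your genericity assumption on $\rho$ is harmless but in fact unnecessary, since an interior ray of a full-dimensional cone never lies in the hyperplane spanned by a facet, so the combinatorial identification $\Sigma_E\cong\partial\sigma$ always holds; and the identification $H^2(E,\CC)=H^{1,1}(E)$ is not strictly needed once you have $\Pic(E)_\QQ\hookrightarrow H^2(E,\QQ)$.
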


    The span condition in Theorem \ref{theo:lcdef-of-not-a-pyramid-dim-4} is included to rule out examples coming from Theorem \ref{theo:adding-new-ray}. We use this theorem to produce interesting 4-dimensional examples of varieties with $\lcdef = 1$ (see Example \ref{exam:dor}).

    \begin{rema}
		We remark that the local cohomological dimension is interesting only in characteristic zero, and its behavior is drastically different in positive characteristics. Indeed, toric varieties are Cohen--Macaulay, hence if we embed a toric variety $X$ into a smooth variety $Z$, we have $\lcd(Z, X) - \codim_{Z}X=0$ by \cite{Peskine-Szpiro:char-p-lcdef}*{\S III. Proposition 4.1}. In that article, the authors exploit the action of Frobenius on local cohomology.
	\end{rema}

    \subsection{Organization of the paper}

    We discuss some preliminaries on the Du Bois complex and toric varieties in \S \ref{sec:preliminaries}. We then relate the Lefschetz morphism on singular cohomology with the local cohomological defect and prove Theorem \ref{theo:lefschetz-ishida-correct-ver} and its consequences in \S \ref{section:Lefschetz}. Finally, in \S \ref{sec:combinatorial-results-lcdef}, we prove the combinatorially flavored results, namely Theorems \ref{theo:adding-new-ray}, \ref{theo:shelling_contains_vertex}, \ref{theo:lcdef-of-not-a-pyramid-dim-4} and Proposition \ref{prop:Euler-char-lcdef}.

	\section{Preliminaries}\label{sec:preliminaries}
    \subsection{The Du Bois complex}
    In \cite{DuBois:complexe-de-deRham}, Du Bois introduced a filtered complex $\duBois_{X}^{\bullet}$ which can be thought of as a correct replacement of the de Rham complex $\Omega_{X}^{\bullet}$ when $X$ is singular. By taking the graded quotients, the {\it $p$-th Du Bois complex} is defined as
	$$ \duBois_{X}^{p} := \gr_{F}^{p} \duBois_{X}^{\bullet} [p] \in D^b_{\rm coh}(X).$$
    We have a natural comparison map $\Omega_{X}^{\bullet} \to \duBois_{X}^{\bullet}$ of filtered complexes which is an isomorphism if $X$ is smooth, where the filtration on $\Omega_{X}^{\bullet}$ is given by the stupid filtration.

    The Du Bois complex is indeed the `correct' object to consider when $X$ is singular. For example, we have an isomorphism $\CC_{X} \simeq \duBois_{X}^{\bullet}$ and so, the hypercohomology of $\duBois_{X}^{\bullet}$ computes the singular cohomology of $X$. If $X$ is a proper variety, then the spectral sequence computing the singular cohomology degenerates at $E_{1}$. In particular, the filtration given by the spectral sequence agrees with Deligne's Hodge filtration on the singular cohomology of algebraic varieties in the following sense:
    $$ F^{p}H^{k}(X, \CC) = \im (\HH^{k}(X, F^{\geq p}\duBois_{X}^{\bullet}) \to H^{k}(X, \CC)).$$
    Hence, the graded quotient can be expressed as $\gr_{F}^{p} H^{k}(X,\CC) \simeq \HH^{k-p}(X, \duBois_{X}^{p})$.

    By \cite{Guillen-Navarro-Gainza:Hyperresolutions-cubiques}*{V.4}, the Du Bois complex $\duBois_{X}^{p}$ coincides with the sheaf of reflexive differentials $\Omega_{X}\ubind{p}$ when $X$ is a toric variety. In particular, $\duBois_{X}^{p}$ is a sheaf in this case. Since all of the varieties that we deal with are toric, we will not distinguish the Du Bois complex and the sheaf of reflexive differentials throughout this article.
    
	\subsection{Toric varieties} \label{section:toric-var-basic}
	We follow \cite{Fulton-ToricVar, CoxLittleSchenck-ToricVar} for general notions of toric varieties. To a strongly convex rational polyhedral cone $\sigma$, we associate an affine toric variety $X_{\sigma} = \Spec \CC [\sigma\dual \cap M]$. In general, to a fan $\cP$, we associate a toric variety by gluing the affine toric varieties corresponding to the cones of $\cP$.
	
	Before going into toric varieties, we set up some notation for convex cones. From now on, all cones are strongly convex rational polyhedral. Let $N$ be a free abelian group of rank $n$ and let $M:= \Hom_{\ZZ}(N, \ZZ)$. Denote $N_{\RR} := N \otimes_{\ZZ} \RR$ and $M_{\RR} := M \otimes_{\ZZ} \RR$. Let $\sigma$ be a cone in $N_{\RR}$. We denote $\cP$ the collection of all faces of $\sigma$ and view $(\cP, \subseteq)$ as a graded poset. For an integer $m \in [0, n]$, we denote by
	$$ \cP_{m} = \{ \lambda \in \cP : \dim \tau = m \}. $$
	For $\mu \in \cP$, we set
	$$ \cP^{\subset \mu} := \{\lambda \in \cP : \lambda \subset \mu\}, \quad \cP^{\supset \mu} := \{ \lambda \in \cP: \lambda \supset \mu\}.$$
	Also, we set $\cP_{m}^{\subset \mu}:= \cP_{m} \cap \cP^{\subset \mu} $ and $\cP_{m}^{\supset \mu}:= \cP_{m} \cap \cP^{\supset \mu}$.
	
	Let $\mu, \tau \in \cP$.
	\begin{enumerate}
		\item $\tau\dual := \{ u \in M_{\RR} : \langle u , v \rangle \geq 0 \text{ for all }v \in \tau \} $
		\item $\tau^{\perp} := \{ u \in M_{\RR} : \langle u , v \rangle = 0 \text{ for all }v \in \tau \} $
		\item $\tau\sta_{\circ} := (\tau^{\perp} \cap \sigma\dual \cap M) \setminus \bigcup_{\tau \subsetneq \nu} (\nu^{\perp} \cap \sigma\dual \cap M)$
		\item $\langle \tau \rangle \subset N_{\RR}$ is the subspace spanned by $\tau$
		\item $d_{\tau} = \dim_{\RR} \langle\tau\rangle$
		\item We say $\sigma$ is full-dimensional if $d_{\sigma} = \rank_{\ZZ} N$.
		\item $\tau$ is \textit{simplicial} if the 1-dimensional faces (i.e. rays) of $\tau$ are linearly independent over $\RR$ in $N_{\RR}$.
	\end{enumerate}
	
	\begin{rema} \label{rema:grading-and-face-relations}
		We remark that there is an order-reversing one-to-one correspondence between the faces of $\sigma$ and the faces of $\sigma\dual$ by sending $\tau$ to $\tau^{\perp} \cap \sigma^{\dual}$. We also point out that $\{\tau_{\circ}\sta | \tau \in \cP \}$ gives a partition of the set $\sigma\dual \cap M$. It is straightforward to check that for $u \in \tau_{\circ}\sta$, we have
		$$ u \in \mu^{\perp} \cap \sigma\dual \cap M \quad \text{if and only if} \quad \mu \subset \tau.$$
	\end{rema}
	
	\noindent
	We briefly describe the structure of affine charts, torus-invariant closed subsets and the orbits, following \cite{Fulton-ToricVar}*{Section 3.1}. Let $X =\Spec \CC[\sigma\dual \cap M]$ be the affine toric variety associated to a cone $\sigma$. For an $r$-dimensional face $\tau$ of $\sigma$, we get an irreducible torus-invariant subvariety $S_{\tau}$ of codimension $r$ given by $ \Spec \CC[\sigma\dual \cap \tau^{\perp} \cap M]$. This is the affine toric variety corresponding to the cone $\overline{\sigma}_{\tau}$, where $\overline{\sigma}_{\tau}$ is the image of $\sigma$ under the projection map $N_{\RR} \to N_{\RR} / \langle \tau \rangle$. The lattice and the dual lattice of $S_{\tau}$ is given by
	$$ N_{\tau} := \frac{N}{N \cap \langle \tau \rangle}, \qquad M_{\tau} := M \cap \tau^{\perp}.$$
	We denote by $O_{\tau} = \Spec \CC[M_{\tau}]$ the torus orbit corresponding to $\tau$, and $U_{\tau} =\Spec \CC[\tau^{\dual} \cap M]$ the affine chart of $X$ corresponding to $\tau$. We have a diagram of torus equivariant morphisms
	$$ \begin{tikzcd}
		U_{\tau} \ar[r, hook] \ar[d]& X_{\sigma} \ar[d] \\ O_{\tau} \ar[r, hook] & S_\tau.
	\end{tikzcd}$$
	Here, the horizontal arrows are open immersions. Also, after fixing a non-canonical splitting $N = N_{\tau} \oplus (N \cap \langle\tau\rangle)$ and the corresponding splitting $M = M_{\tau} \oplus M'$, we can identify the vertical map $U_{\tau} \to O_{\tau}$ as the projection $U_{\tau} = V_{\tau} \times O_{\tau} \to O_{\tau}$, where $V_{\tau}$ is the full-dimensional toric variety $\Spec \CC[\tau\dual \cap M']$, by viewing $\tau$ as a cone in $\langle \tau \rangle \cap N_{\RR}$.
	
	We also mention that by \cite{CoxLittleSchenck-ToricVar}*{Theorem 9.2.5}, toric varieties are normal and they have rational singularities, hence are Cohen--Macaulay.

        \subsection{Differential forms on toric varieties} \label{section:diff-forms-on-toric}
	We briefly discuss how differential forms work on toric varieties. Note that $M$ can be identified with the group of characters of the torus $\Hom(\bfT, \CC^{\times})$, where $\bfT$ is the torus. Hence, for $u \in M$, one can associate a differential 1-form $d \log \chi^{u} := \chi^{-u} \cdot d\chi^{u}$ on $\bfT$, where $\chi^{u} : \bfT \to \CC^{\times}$ is the character corresponding to $u$. One can show that these differential forms extend as logarithmic differential forms on the whole space $X$ (in a suitable sense), and give an isomorphism
	$$ M \otimes_{\ZZ} \cO_{X} \simeq \Omega_{X}\ubind{1} (\log D),$$
	where $D$ is the sum of the torus-invariant divisors. Here, $\Omega_{X}\ubind{1} (\log D) := j\lsta \Omega_{X \setminus Z}^{1} (\log D|_{X \setminus Z})$ where $Z$ is the union of codimension 2 torus-invariant subspaces, so $X \setminus Z$ is smooth and $D|_{X \setminus Z}$ is a smooth divisor, and $j : X \setminus Z \to X$ is the open inclusion. From this, we can see that
	$$ \bigwedge^{l} M \otimes_{\ZZ} \cO_{X} \simeq \Omega_{X}\ubind{l} (\log D),$$
	where $\Omega_{X}\ubind{l}(\log D)$ is defined analogously.

    Consider an irreducible torus-invariant divisor $S_{\rho}$ on $X$ where $\rho$ is the corresponding ray. A logarithmic form $\alpha \in \bigwedge^{l} M \otimes \cO_{X}$ is a differential form (i.e., an element of $\Omega_{X}^{l}$) on a neighborhood of the torus orbit $O_{\rho}$ if and only if $\alpha$ lies in the kernel of
    $$ \bigwedge^{l} M \otimes \cO_{X} \to \bigwedge^{l-1} \rho^{\perp} \otimes \cO_{S_{\rho}}.$$
    The map $\bigwedge^{l} M \to \bigwedge^{l-1} \rho^{\perp}$ is given by the contraction with the primitive element of $\rho$ (see \S\ref{section:Ishida-complex} for the description in terms of the Ishida complex). In particular, this says that for a collection of torus-invariant divisors $E = \sum_{\rho \in I} S_{\rho}$, we have
    $$ \Omega_{X}\ubind{l}(\log E) = \ker \left( \bigwedge^{l} M \otimes \cO_{X} \to \bigoplus_{\mu \notin I} \bigwedge^{l-1} \mu^{\perp} \otimes \cO_{S_{\mu}} \right),$$
    where the sum on the right runs over all torus-invariant divisors $S_{\mu}$ on $X$ for $\mu$ not contained in $I$. This can be seen following the lines of \cite{LCDTV1}*{Proposition 4.7}.
	
	\subsection{Shelling} \label{section:shelling}
	We introduce the concept of \textit{shelling}. While the shelling is usually considered for polytopes, we use the language of cones, since it is better for our purposes.
	
	\begin{defi}
		Let $\sigma$ be a cone of dimension $n$. Let $\cP$ be the fan associated to $\sigma$, which is the collection of all faces of $\sigma$. A \textit{shelling} of $\sigma$ is a linear ordering $\mu_{1},\ldots, \mu_{s}$ of $\cP_{n-1}$ such that either $n = 1$, or it satisfies the following condition:
		\begin{enumerate}
			\item The set of facets $\cP_{n-2}^{\subset \mu_{1}}$ of the first facet $\mu_{1}$ has a shelling.
			\item For $1 < j \leq s$,
			$$ \mu_{j} \cap \left( \bigcup_{i=1}^{j-1} \mu_{i}\right) = \lambda_1 \cup \ldots \cup \lambda_r $$
			for some shelling $\lambda_1,\ldots, \lambda_r, \ldots, \lambda_t$ of $\cP_{n-2}^{\subset \mu_{j}}$.
		\end{enumerate}
		We say a cone is \textit{shellable} if it admits a shelling.
	\end{defi}
	
	By \cite{Bruggesser-Mani:Shellable}, all cones are shellable. Indeed, the shelling of a polytope of dimension $n-1$ obtained by a suitable hyperplane section of the cone provides a shelling of the cone itself.

    \subsection{Ishida complex} \label{section:Ishida-complex}
    In this section, we recall some basic definitions regarding the Ishida complex \cite{Ishida2} and prove Proposition \ref{prop:lcdef-variety-in-terms-of-cone}. We refer to \cite{Ishida2} or \cite{LCDTV1}*{\S4} for more details and proofs. We fix a toric variety $X$ associated to a fan $\cP$ in $N$. For $\mu \subset \tau$ faces of $\sigma$ with $d_{\tau} = d_{\mu} + 1$, we denote by $n_{\mu, \tau}$ an element in $N$ such that $\langle \cdot, n_{\mu,\tau} \rangle : M \to \ZZ$ is zero on $\tau^{\perp}\cap M$ and maps $\tau\dual \cap \mu^{\perp}\cap M$ onto $\ZZ_{\geq 0}$. Note that this element is well-defined modulo $\langle\mu\rangle \cap N$. Then we define the $l$-th Ishida complex as
    $$ \Ish_{X}^{l} : \bigwedge^{l} M_{\RR} \otimes_{\RR} \cO_{X} \to \bigoplus_{\mu \in\cP_{1}} \bigwedge^{l-1} \mu^{\perp} \otimes_{\RR} \cO_{S_{\mu}} \to \ldots \to \bigoplus_{\mu \in \cP_{l}} \RR_{\mu} \otimes_{\RR} \cO_{S_{\mu}}. $$
    This complex lives in cohomological degrees 0 to $l$. The maps in the complex are given as follows. If $\mu \in \cP_{m}$ and $\tau \in \cP_{m+1}$ with $\mu \subset \tau$, then we have a morphism $\varphi_{\mu, \tau}^{l} \colon\bigwedge^{l-m} \mu^{\perp} \to \bigwedge^{l-m-1} \tau^{\perp}$ given by the contraction by $n_{\mu, \tau}$. The corresponding map in the complex is given by $\varphi_{\mu, \tau}^{l}$ tensored with the restriction morphism $\cO_{S_{\mu}} \to \cO_{S_{\tau}}$. The fact that this is indeed a complex directly translates to the following easy linear algebra fact:
    \begin{lemm} \label{lemm:anti-commutativity-complex}
        Let $\mu \in \cP_{m}$ and $\tau \in \cP_{m+2}$ with $\mu \subset \tau$. Then there exist exactly two elements $\lambda_1$ and $\lambda_2$ in $\cP_{m+1}$ such that $\mu \subset \lambda_i \subset \tau$. Furthermore, we have
		$$ \varphi_{\lambda_1, \tau}^{l} \circ \varphi_{\mu,\lambda_1}^l + \varphi_{\lambda_2, \tau}^{l} \circ \varphi_{\mu,\lambda_2}^{l} = 0. $$
    \end{lemm}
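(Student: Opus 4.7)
The plan is to reduce the lemma to a $2$-dimensional linear algebra computation by passing to the rank-$2$ quotient lattice $\overline{N} := N/(N \cap \langle \mu \rangle)$, in which the image $\overline{\tau}$ of $\tau$ is a strongly convex rational polyhedral cone of dimension $2$. The first assertion---the existence of exactly two faces $\lambda_1, \lambda_2$ strictly between $\mu$ and $\tau$---is the classical diamond property of the face lattice of a cone, and in this reduction corresponds to the elementary fact that $\overline{\tau}$ has exactly two rays, namely $\overline{\lambda}_1$ and $\overline{\lambda}_2$.

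For the anti-commutativity statement, observe that each composition $\varphi^l_{\lambda_i, \tau} \circ \varphi^l_{\mu, \lambda_i}$ is an iterated contraction, which depends only on the bivector $n_{\mu, \lambda_i} \wedge n_{\lambda_i, \tau} \in \bigwedge^2 N$. Moreover, contraction by any element of the kernel of $\bigwedge^2 N \to \bigwedge^2 \overline{N}$ vanishes on $\bigwedge^\bullet \mu^\perp$, since any element of $N \cap \langle \mu \rangle$ pairs trivially with $\mu^\perp$. Thus it suffices to verify the identity
\begin{equation*}
\xi := \overline{n}_{\mu, \lambda_1} \wedge \overline{n}_{\lambda_1, \tau} + \overline{n}_{\mu, \lambda_2} \wedge \overline{n}_{\lambda_2, \tau} = 0 \quad \text{in } \bigwedge^2 (\overline{N} \otimes \RR).
\end{equation*}

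To carry this out, set $e_i := \overline{n}_{\mu, \lambda_i}$, which by its defining property is the primitive integral generator of the ray $\overline{\lambda}_i$ in $\overline{N}$. The pair $\{e_1, e_2\}$ is an $\RR$-basis of $\overline{N} \otimes \RR$; let $\{e_1^*, e_2^*\}$ denote the dual basis in $\overline{N}^* \otimes \RR$ and set $d := [\overline{N} : \ZZ e_1 + \ZZ e_2]$. A direct computation identifies the primitive integral generators of the two rays of $\overline{\tau}^\vee$ as $f_i := d \cdot e_j^*$ with $\{i, j\} = \{1, 2\}$. The defining pairing condition $\langle f_i, \overline{n}_{\lambda_i, \tau}\rangle = 1$ then pins down the $e_j$-coefficient of $\overline{n}_{\lambda_i, \tau}$ to be exactly $1/d$. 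Writing $\overline{n}_{\lambda_i, \tau} = \alpha_i e_1 + \beta_i e_2$, this yields $\beta_1 = \alpha_2 = 1/d$, and hence $\xi = (\beta_1 - \alpha_2)(e_1 \wedge e_2) = 0$. The main technical input is the identification $f_i = d e_j^*$, which is a short computation with the $\RR$-basis $\{e_1, e_2\}$ of $\overline{N} \otimes \RR$ and its relationship to the integral structure on $\overline{N}$; once this is in place, the cancellation itself reflects the symmetry of the lattice index $d$ under swapping $e_1 \leftrightarrow e_2$.
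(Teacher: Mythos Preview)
Your argument is correct. The paper itself does not prove this lemma; it is stated as an ``easy linear algebra fact'' that amounts to the Ishida complex being a complex, and the authors refer to \cite{Ishida2} and \cite{LCDTV1} for details. Your reduction to the rank-$2$ quotient $\overline{N}=N/(N\cap\langle\mu\rangle)$ is exactly the right move: it makes both the diamond property and the anti-commutativity transparent. Two small points worth recording explicitly, since they are the places a reader might pause: first, the bivector $n_{\mu,\lambda_i}\wedge n_{\lambda_i,\tau}$ is well-defined in $\bigwedge^2\overline{N}_{\RR}$ even though $n_{\lambda_i,\tau}$ is only defined modulo $\langle\lambda_i\rangle\cap N$, because the ambiguity lies in $\RR e_i + \langle\mu\rangle$ and $e_i\wedge e_i=0$; second, the identification $f_i=d\,e_j^*$ genuinely uses that both $e_1$ and $e_2$ are primitive in $\overline{N}$ (extend $e_1$ to a $\ZZ$-basis $e_1,f$ of $\overline{N}$, write $e_2=ae_1+bf$ with $\gcd(a,b)=1$, and compute $d=|b|$). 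With those checks in place, the cancellation $\beta_1=\alpha_2=1/d$ is exactly as you wrote, and the proof is complete.
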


    We similarly define the complex of finite dimensional vector spaces $\Ish^l_{\cP}$ as follows:
    $$ \Ish_{\cP}^{l} : \bigwedge^{l} M_{\RR}  \to \bigoplus_{\mu \in\cP_{1}} \bigwedge^{l-1} \mu^{\perp}  \to \ldots \to \bigoplus_{\mu \in \cP_{l}} \RR_{\mu}. $$

	If $X$ is an affine toric variety corresponding to a full-dimensional cone $\sigma \subset N$, then the complex $\Ish_{X}^{l}$ carries a natural grading by the group of characters $M$, and one can easily see that $\Ish_{\sigma}^{l}$ is exactly the degree zero part of $\Ish_{X}^{l}$, with respect to this grading. Here, we consider $\sigma$ also as the fan given by the collection of all faces of $\sigma$.
    
    The Ishida complex agrees with the Grothendieck dual of the Du Bois complex.
	\begin{prop} \label{prop:Gro-dual-of-Ishida-is-duBois}\cite{Ishida2}
	    Let $X$ be a toric variety. Then
        $$ \Ish_{X}^{l} \simeq \bfR \SheafHom_{\cO_{X}} (\duBois_{X}^{n-l} ,\omega_{X}).$$
	\end{prop}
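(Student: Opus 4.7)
The plan is to prove that $\bfR\SheafHom_{\cO_X}(\Ish_X^l, \omega_X) \simeq \Omega_X\ubind{n-l}$, from which the asserted isomorphism follows by biduality on the Cohen--Macaulay variety $X$, together with the identification $\duBois_X^{n-l} \simeq \Omega_X\ubind{n-l}$ from \cite{Guillen-Navarro-Gainza:Hyperresolutions-cubiques}.

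I would proceed term-by-term. For $\mu \in \cP_m$, the closed immersion $i_\mu \colon S_\mu \hookrightarrow X$ has codimension $d_\mu = m$, and Grothendieck duality gives
\begin{equation*}
\bfR\SheafHom_{\cO_X}\bigl(\textstyle\bigwedge^{l-m}\mu\orth \otimes \cO_{S_\mu},\, \omega_X\bigr) \simeq i_{\mu,*}\bigl(\textstyle\bigwedge^{l-m}(\mu\orth)\dual \otimes \omega_{S_\mu}\bigr)[-m].
\end{equation*}
Using the toric formula $\omega_{S_\mu} \simeq \bigwedge^{n-m}\mu\orth \otimes \cO_{S_\mu}(-D_\mu)$ (with $D_\mu$ the sum of torus-invariant prime divisors on $S_\mu$) together with the exterior-power identity $\bigwedge^a V\dual \otimes \bigwedge^{\dim V}V \simeq \bigwedge^{\dim V - a}V$ applied to $V = \mu\orth$, this simplifies to $i_{\mu,*}\bigl(\bigwedge^{n-l}\mu\orth \otimes \cO_{S_\mu}(-D_\mu)\bigr)[-m]$. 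Consequently, the hypercohomology spectral sequence for $\bfR\SheafHom_{\cO_X}(\Ish_X^l, \omega_X)$ has nonzero $E_1$-terms only on the antidiagonal of total degree zero (since a term from $\mu \in \cP_m$ contributes in cohomological degree $m - m = 0$), and a straightforward indexing argument shows all differentials $d_r$ vanish. Hence $\bfR\SheafHom_{\cO_X}(\Ish_X^l, \omega_X)$ is concentrated in cohomological degree zero, filtered by subquotients of the form $\bigoplus_{\mu \in \cP_m}i_{\mu,*}\bigl(\bigwedge^{n-l}\mu\orth \otimes \cO_{S_\mu}(-D_\mu)\bigr)$.

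The remaining task is to identify this filtered sheaf with $\Omega_X\ubind{n-l}$. I would do this character-by-character: decomposing by $u \in M$ and using the partition $\sigma\dual \cap M = \bigsqcup_\tau \tau\sta_{\circ}$ (Remark~\ref{rema:grading-and-face-relations}), the degree-$u$ piece (for $u \in \tau\sta_{\circ}$) becomes a finite-dimensional filtered vector space, and one verifies directly that the extensions between the graded pieces, coming from the Grothendieck duals of the contraction-by-$n_{\mu,\tau'}$ maps of $\Ish_X^l$, glue to reproduce the degree-$u$ part of $\Omega_X\ubind{n-l}$ as described in Section~\ref{section:diff-forms-on-toric}.

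The main obstacle is precisely this matching step: the direct sum of graded pieces is much larger than $\Omega_X\ubind{n-l}$, and the correct identification hinges on the non-trivial extensions between them. Tracking these extensions---together with the sign conventions inherent in Grothendieck duality---is the technical heart of the argument and is carried out in detail in \cite{Ishida2}.
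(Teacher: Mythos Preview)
The paper does not give its own proof of this proposition: it is simply attributed to \cite{Ishida2} (with a pointer also to \cite{LCDTV1}*{\S4}). So there is nothing in the paper to compare your argument against beyond the bare citation.

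That said, your outline is a faithful reconstruction of how the argument in \cite{Ishida2} actually goes. The termwise Grothendieck duality computation is correct: each $\bigwedge^{l-m}\mu^\perp\otimes\cO_{S_\mu}$ dualizes to a sheaf sitting in a single cohomological degree, the degrees line up on the antidiagonal of total degree $0$, and hence the spectral sequence degenerates at $E_1$, so that $\bfR\SheafHom_{\cO_X}(\Ish_X^l,\omega_X)$ is a sheaf with the filtration you describe. The biduality step is also fine, since $\omega_X[n]$ is a dualizing complex on the Cohen--Macaulay variety $X$.

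You are equally right that the crux is the identification of this filtered sheaf with $\Omega_X\ubind{n-l}$: the graded pieces alone do not determine the sheaf, and one has to track the extensions (equivalently, the maps induced by the differentials of $\Ish_X^l$ under duality) and match them against the description of reflexive differentials in \S\ref{section:diff-forms-on-toric}. Since you ultimately send the reader back to \cite{Ishida2} for this step, your proposal is in effect doing what the paper does---citing Ishida---just with more of the scaffolding made visible. That is perfectly acceptable here, but be aware that if you were asked for a self-contained proof, the extension-matching is where all the work remains.
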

    It is easy to describe other graded pieces of the Ishida complex using the notation above.
	\begin{lemm} \label{lemm:grade-parts-of-Ishida-complex}
		Let $u \in \tau_{\circ}\sta$ for some $\tau \in \cP$. Then the degree $u$-part of the Ishida complex $\Ish_{X}^{l}$ is isomorphic to
		$$ \bigoplus_{j=0}^{l} \bigwedge^{j} \tau^{\perp} \otimes \Ish_{\tau}^{l-j}  $$
		with the convention that $\Ish_{\tau}^{j}$ and $\bigwedge^{j} \tau^{\perp}$ is zero if $j > d_{\tau}$.
	\end{lemm}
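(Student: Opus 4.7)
The plan is to proceed by working grade-by-grade. First, I identify the $M$-degree-$u$ piece of each term of $\Ish_X^l$. The $m$-th term is $\bigoplus_{\mu\in\cP_m}\bigwedge^{l-m}\mu^{\perp}\otimes\cO_{S_\mu}$, and since $\cO_{S_\mu} = \CC[\sigma\dual\cap\mu^{\perp}\cap M]$, its degree-$u$ piece is one-dimensional exactly when $u\in\sigma\dual\cap\mu^{\perp}\cap M$. Because $u\in\tau_{\circ}\sta$, Remark \ref{rema:grading-and-face-relations} rephrases this as $\mu\subseteq\tau$. Thus only faces of $\tau$ contribute, and the degree-$u$ piece of the $m$-th term is $\bigoplus_{\mu\in\cP_m^{\subset\tau}}\bigwedge^{l-m}\mu^{\perp}$.

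Second, I fix a splitting of $\tau^{\perp}\hookrightarrow M_{\RR}$ to obtain, for each face $\mu$ of $\tau$, a direct sum decomposition $\mu^{\perp}=\tau^{\perp}\oplus\mu^{\perp}_\tau$, where $\mu^{\perp}_\tau$ is the annihilator of $\mu$ inside $M_{\RR}/\tau^{\perp}$. The latter space is dual to $\langle\tau\rangle$ and plays the role of $M_{\RR}$ for the cone $\tau$ viewed as full-dimensional in $\langle\tau\rangle$. Applying the wedge decomposition
$$\bigwedge^{l-m}\mu^{\perp}=\bigoplus_{j=0}^{l-m}\bigwedge^{j}\tau^{\perp}\otimes\bigwedge^{l-m-j}\mu^{\perp}_\tau$$
and regrouping by $j$, the degree-$u$ piece in degree $m$ becomes $\bigoplus_{j}\bigwedge^{j}\tau^{\perp}\otimes T_m^{l-j}$, where $T_m^{l-j} := \bigoplus_{\mu\in\cP_m^{\subset\tau}}\bigwedge^{l-m-j}\mu^{\perp}_\tau$ is visibly the $m$-th term of $\Ish_\tau^{l-j}$.

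Third, I verify that the Ishida differentials respect this decomposition. For $\mu\subsetneq\nu$ consecutive faces contained in $\tau$, the map $\varphi_{\mu,\nu}^l$ is contraction by $n_{\mu,\nu}$, which by definition annihilates $\nu^{\perp}\supseteq\tau^{\perp}$. The anti-derivation property of interior multiplication then yields, for $\alpha\in\bigwedge^{j}\tau^{\perp}$ and $\beta\in\bigwedge^{l-m-j}\mu^{\perp}_\tau$,
$$\iota_{n_{\mu,\nu}}(\alpha\wedge\beta) = (-1)^{j}\alpha\wedge\iota_{n_{\mu,\nu}}(\beta),$$
with $\iota_{n_{\mu,\nu}}(\beta)\in\bigwedge^{l-m-j-1}\nu^{\perp}_\tau$. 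Hence, up to a global sign $(-1)^{j}$ on the $j$-th summand---absorbable by rescaling the $m$-th term of that summand by $(-1)^{jm}$---the induced differential is $\id\otimes d_{\Ish_\tau^{l-j}}$, giving the claimed isomorphism of complexes. The conventions ($\Ish_\tau^{l-j}=0$ for $l-j>d_\tau$ and $\bigwedge^{j}\tau^{\perp}=0$ for $j>\dim M_{\RR}-d_\tau$) fall out automatically from the source spaces.

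The main technical point is the third step: ensuring the sign bookkeeping intrinsic to the Ishida differential (cf. Lemma \ref{lemm:anti-commutativity-complex}) is preserved under the decomposition. However, the derivation property of contraction handles this cleanly, and $n_{\mu,\nu}$ is intrinsic to the pair $(\mu,\nu)$ modulo $\langle\mu\rangle\cap N$, so it represents the same map whether $\mu\subsetneq\nu$ is viewed as consecutive faces of $\sigma$ or of $\tau$; consequently the differentials of $\Ish_\tau^{l-j}$ are exactly what appear.
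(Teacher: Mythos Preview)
Your argument is correct and is precisely the straightforward computation the paper has in mind; the paper does not actually write out a proof of this lemma (it is stated with the preface ``It is easy to describe other graded pieces of the Ishida complex using the notation above'' and the reader is referred to \cite{LCDTV1}*{\S4}). Your three steps---restricting to faces $\mu\subset\tau$ via Remark \ref{rema:grading-and-face-relations}, splitting $\mu^{\perp}=\tau^{\perp}\oplus\mu^{\perp}_{\tau}$ and decomposing the exterior power, and checking compatibility with the differentials using $n_{\mu,\nu}\in\langle\nu\rangle\subset\langle\tau\rangle$ together with the anti-derivation property of contraction---are exactly what is needed, and the sign absorption via $(-1)^{jm}$ is handled correctly.
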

	
	We now simply rephrase Theorem \ref{theo:MP-lcdef-and-depth-of-Du-Bois} in terms of the Ishida complex.
	\begin{prop} \label{prop:lcd-intermsof-Ishida}
		Let $X$ be a toric variety of dimension $n$. Then $\lcdef(X)$ is the maximal integer $c$ satisfying the following two properties:
        \begin{enumerate}
            \item $H^{j+c}(\Ish_{X}^{n-j}) \neq 0$ for some $j \geq 0$, and
            \item $H^{j+c+1}(\Ish_{X}^{n-j}) = 0$ for all $j \geq 0$.
        \end{enumerate}
	\end{prop}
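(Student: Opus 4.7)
The proof is essentially a bookkeeping exercise that translates the content of Theorem \ref{theo:MP-lcdef-and-depth-of-Du-Bois} through the identification in Proposition \ref{prop:Gro-dual-of-Ishida-is-duBois}. The plan is the following.

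First I would recall that Theorem \ref{theo:MP-lcdef-and-depth-of-Du-Bois} characterizes $\lcdef(X)$ as the largest integer $c$ for which $\SheafExt^{j+c}_{\cO_X}(\duBois_X^j, \omega_X) \neq 0$ for some $j \geq 0$ and $\SheafExt^{j+c+1}_{\cO_X}(\duBois_X^j, \omega_X) = 0$ for all $j \geq 0$. Since $X$ is Cohen--Macaulay, this theorem applies to our toric setting.

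Next, I would invoke Proposition \ref{prop:Gro-dual-of-Ishida-is-duBois}, which furnishes an isomorphism $\Ish_X^l \simeq \bfR\SheafHom_{\cO_X}(\duBois_X^{n-l}, \omega_X)$ in $D^b_{\mathrm{coh}}(X)$. Setting $l = n-j$ converts this into
\[
\Ish_X^{n-j} \simeq \bfR\SheafHom_{\cO_X}(\duBois_X^{j}, \omega_X),
\]
and taking the $k$-th cohomology sheaf of both sides produces the sheaf-level identification
\[
\cH^{k}\bigl(\Ish_X^{n-j}\bigr) \simeq \SheafExt^{k}_{\cO_X}\bigl(\duBois_X^{j}, \omega_X\bigr).
\]
Here the symbol $H^{k}(\Ish_X^{n-j})$ appearing in the statement is interpreted as the cohomology sheaf $\cH^{k}$ of the complex $\Ish_X^{n-j}$, consistent with how this notation has been used for the sheafy Ishida complex elsewhere in the paper.

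Substituting $k = j+c$ and $k = j+c+1$ into this identification, the two conditions of Theorem \ref{theo:MP-lcdef-and-depth-of-Du-Bois} become precisely conditions (1) and (2) of the proposition. Taking the maximum such $c$ on each side gives the desired equality. There is no real obstacle here: the entire content of the proposition is the dictionary provided by Proposition \ref{prop:Gro-dual-of-Ishida-is-duBois}, and the only mild subtlety is notational — making sure the reader understands that $H^{k}(\Ish_X^{n-j})$ refers to the cohomology sheaf of a complex of sheaves, which matches the $\SheafExt$ sheaves appearing on the Du Bois side.
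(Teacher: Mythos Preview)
Your proposal is correct and matches the paper's approach exactly: the paper states that this proposition is simply a rephrasing of Theorem \ref{theo:MP-lcdef-and-depth-of-Du-Bois} via the identification $\Ish_X^{n-j} \simeq \bfR\SheafHom_{\cO_X}(\duBois_X^j,\omega_X)$ from Proposition \ref{prop:Gro-dual-of-Ishida-is-duBois}, which is precisely the bookkeeping you carry out.
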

    Combining Lemma \ref{lemm:grade-parts-of-Ishida-complex} and Proposition \ref{prop:lcd-intermsof-Ishida} immediately gives a proof of Proposition \ref{prop:lcdef-variety-in-terms-of-cone}.

	\subsection{Some linear algebra lemmas}
	Here, we provide two small linear algebra lemmas that we will use later. We consider $\mu \subset \tau$ two faces of $\sigma$ with $d_{\tau} = d_{\mu} + 1$, and $\rho$ a ray not contained in $\tau$. Consider $\prescript{\rho}{}{\mu} = \mathrm{span}_{\RR_{\geq 0}}(\mu, \rho)$ and $\prescript{\rho}{}{\tau}$ defined analogously. Suppose that $\mu, \tau, \prescript{\rho}{}{\mu}, \prescript{\rho}{}{\tau}$ are all faces of $\sigma$. Note that $n_{0, \rho}$ is the primitive element in $\rho$.
	
	\begin{defi} \label{defi:a_mu}
		We define
		$$ a_{\mu} := \# \frac{\langle \prescript{\rho}{}{\mu} \rangle \cap N}{ \ZZ \cdot n_{0, \rho} + \langle \mu \rangle \cap N}.$$
		This is a positive integer, since $\ZZ \cdot n_{0, \rho} + \langle \mu \rangle \cap N$ is a finite index subgroup of $\langle \prescript{\rho}{}{\mu}\rangle \cap N$. 
	\end{defi}
	
	\begin{lemm} \label{lemm:vector_multiply_by_amu}
		We have $n_{0, \rho} = a_{\mu} n_{\mu, \prescript{\rho}{}{\mu}}$ modulo $\langle\mu\rangle \cap N$.
	\end{lemm}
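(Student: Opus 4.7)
The plan is to reduce the statement to a one-dimensional computation in the quotient lattice $\overline{N} := N/(\langle \mu \rangle \cap N)$. Since $\mu$ is a face of $\sigma$, the sublattice $\langle \mu \rangle \cap N$ is saturated in $N$, so $\overline{N}$ is torsion-free. Because $d_{\prescript{\rho}{}{\mu}} = d_{\mu}+1$, the cone $\prescript{\rho}{}{\mu}$ projects to a $1$-dimensional ray $\overline{\prescript{\rho}{}{\mu}}$ in $\overline{N}_{\RR}$, and by saturatedness of $\langle \prescript{\rho}{}{\mu}\rangle \cap N$ in $N$, the image of $\langle \prescript{\rho}{}{\mu}\rangle \cap N$ in $\overline{N}$ is the full rank-one subgroup $\langle \overline{\prescript{\rho}{}{\mu}}\rangle \cap \overline{N}$.

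The key step is to recognize $\overline{n_{\mu, \prescript{\rho}{}{\mu}}}$ as the positive primitive generator of $\langle \overline{\prescript{\rho}{}{\mu}}\rangle \cap \overline{N}$. Through the duality pairing $\overline{N} \times M_{\mu} \to \ZZ$ with $M_{\mu} = M \cap \mu^{\perp}$, the vanishing of $\langle \cdot, n_{\mu, \prescript{\rho}{}{\mu}}\rangle$ on $\prescript{\rho}{}{\mu}^{\perp}\cap M$ forces $\overline{n_{\mu,\prescript{\rho}{}{\mu}}}$ to lie in the annihilator of the saturated corank-$1$ sublattice $M \cap \prescript{\rho}{}{\mu}^{\perp} \subset M_{\mu}$, which is exactly $\langle \overline{\prescript{\rho}{}{\mu}}\rangle \cap \overline{N}$; the surjectivity condition onto $\ZZ_{\geq 0}$ then pins it down as the desired primitive generator.

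With this identification in place the conclusion is immediate. Since $\rho \not\subset \langle\mu\rangle$ (otherwise $\dim\prescript{\rho}{}{\mu} = \dim\mu$), the class $\overline{n_{0,\rho}}$ is a nonzero element of the same ray, so $\overline{n_{0,\rho}} = k \cdot \overline{n_{\mu, \prescript{\rho}{}{\mu}}}$ for a unique positive integer $k$. Projecting the index defining $a_{\mu}$ to $\overline{N}$ then yields
$$ a_{\mu} = \left[\, \ZZ \cdot \overline{n_{\mu, \prescript{\rho}{}{\mu}}} \, : \, \ZZ \cdot \overline{n_{0, \rho}}\, \right] = k, $$
which gives $n_{0,\rho} \equiv a_{\mu}\, n_{\mu, \prescript{\rho}{}{\mu}}$ modulo $\langle \mu \rangle \cap N$. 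I do not anticipate a real obstacle: all the content is in the duality argument of the second paragraph that identifies $\overline{n_{\mu, \prescript{\rho}{}{\mu}}}$ as the primitive generator.
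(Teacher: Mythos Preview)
Your proof is correct and follows essentially the same route as the paper: both pass to the rank-one quotient $\langle \prescript{\rho}{}{\mu}\rangle \cap N / (\langle\mu\rangle \cap N)$, identify $n_{\mu,\prescript{\rho}{}{\mu}}$ as its positive primitive generator, and recognize $a_{\mu}$ as the divisibility of $n_{0,\rho}$ there. The only difference is expository: you spell out the duality argument justifying why $\overline{n_{\mu,\prescript{\rho}{}{\mu}}}$ is primitive, whereas the paper simply asserts this as an immediate consequence of the definition of $n_{\mu,\prescript{\rho}{}{\mu}}$.
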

	\begin{proof}
		The group $\langle \prescript{\rho}{}{\mu} \rangle \cap N / \langle \mu \rangle \cap N$ is torsion-free of rank 1, and $n_{0, \rho}$ is a non-trivial element in this group. $a_{\mu}$ is exactly the divisibility of $n_{0, \rho}$ in this group. Also, $n_{\mu, \prescript{\rho}{}{\mu}}$ modulo $\langle \mu \rangle \cap N$ is a generator of this group lying in $\prescript{\rho}{}{\mu}$. Hence, we have $n_{0, \rho} = a_{\mu} a_{\mu, \prescript{\rho}{}{\mu}}$ modulo $\langle \mu \rangle \cap N$.
	\end{proof}
	
	\begin{lemm} \label{lemm:vector_multiply_amu2}
		In the above set-up, we have $a_{\mu} n_{\mu, \tau} = a_{\tau} n_{\prescript{\rho}{}{\mu}, \prescript{\rho}{}{\tau}}$ modulo $\langle \prescript{\rho}{}{\mu}\rangle \cap N$. Hence, the following diagram commutes:
		$$ \begin{tikzcd}
			\bigwedge^{l} \prescript{\rho}{}{\mu}^{\perp} \ar[r, "\varphi_{\prescript{\rho}{}{\mu}, \prescript{\rho}{}{\tau}}"] \ar[d, "a_{\mu}"] & \bigwedge^{l-1} \prescript{\rho}{}{\tau}^{\perp} \ar[d, "a_{\tau}"] \\
			\bigwedge^{l} \mu^{\perp} \ar[r, "\varphi_{\mu, \tau}"] & \bigwedge^{l-1} \tau^{\perp}.
		\end{tikzcd} $$
		The left vertical arrow is induced by the inclusion $\prescript{\rho}{}{\mu}^{\perp} \to \mu^{\perp}$ and multiplication by $a_{\mu}$. The right arrow is defined analogously.
	\end{lemm}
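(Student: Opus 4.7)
The plan is to split the argument into two parts: first reduce the commutativity of the diagram to the main vector identity, and then establish the identity by a lattice calculation in a well-chosen rank-two quotient.

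For the first reduction, set $\gamma := a_\tau n_{\prescript{\rho}{}{\mu}, \prescript{\rho}{}{\tau}} - a_\mu n_{\mu, \tau} \in N$. By $\ZZ$-linearity of contraction in the vector, the two compositions in the diagram differ, on any $\alpha \in \bigwedge^l \prescript{\rho}{}{\mu}^\perp$, precisely by $\iota_\gamma(\alpha)$. Once the identity $\gamma \in \langle \prescript{\rho}{}{\mu}\rangle \cap N$ is known, $\gamma$ pairs trivially with $\prescript{\rho}{}{\mu}^\perp$, so $\iota_\gamma(\alpha) = 0$ and the diagram commutes. The image of each composition lies in $\bigwedge^{l-1} \prescript{\rho}{}{\tau}^\perp \subset \bigwedge^{l-1} \tau^\perp$, as needed.

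To prove the vector identity, I will work inside the rank-two quotient lattice $\Lambda := (\langle \prescript{\rho}{}{\tau}\rangle \cap N)/(\langle \mu\rangle \cap N)$. Inside $\Lambda$ sit two rank-one saturated sublattices $\Lambda_\tau$ and $\Lambda_{\prescript{\rho}{}{\mu}}$ (the images of $\langle \tau\rangle \cap N$ and $\langle \prescript{\rho}{}{\mu}\rangle \cap N$), generated in the positive direction by $n_{\mu, \tau}$ and $n_{\mu, \prescript{\rho}{}{\mu}}$ respectively. Set $k := [\Lambda : \Lambda_\tau + \Lambda_{\prescript{\rho}{}{\mu}}]$, a positive integer. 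Two observations then complete the proof. First, in $\Lambda/\Lambda_\tau$ the image of $\Lambda_{\prescript{\rho}{}{\mu}}$ equals $(\Lambda_\tau + \Lambda_{\prescript{\rho}{}{\mu}})/\Lambda_\tau$, an index-$k$ sublattice; since both $n_{\mu, \prescript{\rho}{}{\mu}}$ and $n_{\tau, \prescript{\rho}{}{\tau}}$ point in the direction of $\rho$, we have $n_{\mu, \prescript{\rho}{}{\mu}} \equiv k \cdot n_{\tau, \prescript{\rho}{}{\tau}} \pmod{\Lambda_\tau}$. Applying Lemma \ref{lemm:vector_multiply_by_amu} to both $\mu$ and $\tau$ and comparing the two expressions for $n_{0,\rho}$ modulo $\Lambda_\tau$ yields the key relation $a_\tau = k\, a_\mu$. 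Second, by the symmetric argument (with $\Lambda_\tau$ and $\Lambda_{\prescript{\rho}{}{\mu}}$ swapped, and with $n_{\mu, \tau}$ and $n_{\prescript{\rho}{}{\mu}, \prescript{\rho}{}{\tau}}$ both pointing into $\tau$), we obtain $n_{\mu, \tau} \equiv k \cdot n_{\prescript{\rho}{}{\mu}, \prescript{\rho}{}{\tau}} \pmod{\Lambda_{\prescript{\rho}{}{\mu}}}$. Multiplying this by $a_\mu$ and substituting $a_\mu k = a_\tau$ gives the required congruence $a_\mu n_{\mu, \tau} \equiv a_\tau n_{\prescript{\rho}{}{\mu}, \prescript{\rho}{}{\tau}} \pmod{\langle \prescript{\rho}{}{\mu}\rangle \cap N}$.

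The only subtlety to manage carefully is orientation bookkeeping: each primitive vector $n_{-,-}$ is chosen to point into the larger cone, and in both rank-one quotients $\Lambda/\Lambda_\tau$ and $\Lambda/\Lambda_{\prescript{\rho}{}{\mu}}$ the two generators being compared point in the same positive direction (that of $\rho$, and that of $\tau$, respectively). Consequently every integer appearing in the computation is a positive multiplicity and no sign discrepancy arises.
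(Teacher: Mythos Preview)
Your proof is correct and takes a genuinely different route from the paper. The paper argues via the dual pairing: it picks test vectors $\alpha \in \prescript{\rho}{}{\mu}^\perp$ and $\beta \in \tau^\perp \setminus \prescript{\rho}{}{\tau}^\perp$, expands the two compositions $\varphi_{\prescript{\rho}{}{\mu}, \prescript{\rho}{}{\tau}} \circ \varphi_{\mu, \prescript{\rho}{}{\mu}}$ and $\varphi_{\tau, \prescript{\rho}{}{\tau}} \circ \varphi_{\mu, \tau}$ on $\alpha \wedge \beta$, and then invokes the anti-commutativity relation of Lemma~\ref{lemm:anti-commutativity-complex} to obtain the scalar identity
\[
\langle n_{\mu, \prescript{\rho}{}{\mu}}, \beta\rangle \langle n_{\prescript{\rho}{}{\mu}, \prescript{\rho}{}{\tau}}, \alpha\rangle = \langle n_{\mu, \tau}, \alpha\rangle \langle n_{\tau, \prescript{\rho}{}{\tau}}, \beta\rangle,
\]
from which the congruence follows after one application of Lemma~\ref{lemm:vector_multiply_by_amu}. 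Your argument instead works intrinsically in the rank-two lattice $\Lambda = (\langle \prescript{\rho}{}{\tau}\rangle \cap N)/(\langle \mu\rangle \cap N)$: you identify a single integer $k = [\Lambda : \Lambda_\tau + \Lambda_{\prescript{\rho}{}{\mu}}]$ that simultaneously governs the divisibility of $n_{\mu,\prescript{\rho}{}{\mu}}$ in $\Lambda/\Lambda_\tau$ and of $n_{\mu,\tau}$ in $\Lambda/\Lambda_{\prescript{\rho}{}{\mu}}$, and then two applications of Lemma~\ref{lemm:vector_multiply_by_amu} give $a_\tau = k a_\mu$ and hence the result. Your approach is slightly more elementary in that it bypasses Lemma~\ref{lemm:anti-commutativity-complex} altogether and makes the integrality of $a_\tau/a_\mu$ transparent; the paper's approach has the virtue of staying within the same contraction formalism used to define the Ishida complex, so the commutativity of the square is essentially already encoded in the computation.
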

	\begin{proof}
		Note that $\prescript{\rho}{}{\mu}$ and $\tau$ are the two faces of $\prescript{\rho}{}{\tau}$ containing $\mu$. Let $\alpha \in \prescript{\rho}{}{\mu}^{\perp}$ and $\beta \in \tau^{\perp}$ not contained in $\prescript{\rho}{}{\tau}^{\perp}$. These vectors are uniquely determined up to a scaling and modulo $\prescript{\rho}{}{\tau}^{\perp}$. We have
		$$ \varphi_{\prescript{\rho}{}{\mu}, \prescript{\rho}{}{\tau}}\circ \varphi_{\mu, \prescript{\rho}{}{\mu}} (\alpha \wedge \beta) = \varphi_{\prescript{\rho}{}{\mu}, \prescript{\rho}{}{\tau}} (- \langle n_{\mu, \prescript{\rho}{}{\mu}}, \beta \rangle \alpha) = -\langle n_{\mu, \prescript{\rho}{}{\mu}}, \beta \rangle \langle n_{\prescript{\rho}{}{\mu}, \prescript{\rho}{}{\tau}} , \alpha\rangle. $$
		Similarly, we have
		$$ \varphi_{\tau, \prescript{\rho}{}{\tau}}\circ \varphi_{\mu, \tau} (\alpha \wedge \beta) = \varphi_{\tau, \prescript{\rho}{}{\tau}} (\langle n_{\mu, \tau},\alpha \rangle \beta) = \langle n_{\mu, \tau},\alpha \rangle \langle n_{\tau, \prescript{\rho}{}{\tau}}, \beta \rangle . $$
		Since $\varphi_{\prescript{\rho}{}{\mu}, \prescript{\rho}{}{\tau}}\circ \varphi_{\mu, \prescript{\rho}{}{\mu}}  + \varphi_{\tau, \prescript{\rho}{}{\tau}}\circ \varphi_{\mu, \tau} = 0$ by Lemma \ref{lemm:anti-commutativity-complex}, we have
		$$\langle n_{\mu, \prescript{\rho}{}{\mu}}, \beta \rangle \langle n_{\prescript{\rho}{}{\mu}, \prescript{\rho}{}{\tau}} , \alpha\rangle = \langle n_{\mu, \tau},\alpha \rangle \langle n_{\tau, \prescript{\rho}{}{\tau}}, \beta \rangle. $$
		By Lemma \ref{lemm:vector_multiply_by_amu}, we have $a_{\mu} n_{\mu, \prescript{\rho}{}{\mu}} = a_{\tau} n_{\tau, \prescript{\rho}{}{\tau}}$ modulo $\langle \tau \rangle \cap N$. This implies $\langle a_{\mu}n_{\mu, \prescript{\rho}{}{\mu}}, \beta \rangle = \langle a_{\tau}n_{\tau, \prescript{\rho}{}{\tau}}, \beta \rangle$. Therefore, we get
		$$ \langle a_{\tau} n_{\prescript{\rho}{}{\mu}, \prescript{\rho}{}{\tau}}, \alpha \rangle = \langle a_{\mu}n_{\mu, \tau}, \alpha\rangle.$$
		This is equivalent to $a_{\mu}n_{\mu, \tau} = a_{\tau} n_{\prescript{\rho}{}{\mu}, \prescript{\rho}{}{\tau}}$ modulo $\langle \prescript{\rho}{}{\mu}\rangle \cap N$, as well as the commutativity of the diagram that we want.
	\end{proof}

    \section{Singular cohomology and the Lefschetz morphism} \label{section:Lefschetz}
    In this section, we consider the Lefschetz morphisms on the singular cohomology of proper toric varieties and relate them to the local cohomological defect. Let $X$ be a proper toric variety of dimension $n$. Let $\cP$ be the corresponding fan. In \cite{CoxLittleSchenck-ToricVar}*{\S12.3}, one uses the spectral sequence associated to a filtered topological space in order to compute the singular cohomology groups $H^{k}(X, \ZZ)$. This spectral sequence degenerates at $E_{2}$ and an Ishida-like complex shows up during this computation. We give a Hodge theoretic interpretation of this computation (for $\QQ$-coefficients). Our first aim is to describe the mixed Hodge structures of the groups $H^{k}(X, \QQ)$. Moreover, given a line bundle $L$, we want to describe the morphism
	$$ c \colon H^{k}(X,\QQ) \to H^{k+2}(X, \QQ)$$
	given by the cup product with the Chern class of $L$ in terms of the data of the fan $\cP$. 

    Note that $ \gr_{F}^{p} H^{k}(X, \QQ_{X}) \simeq \HH^{k-p}(X, \duBois_{X}^{p}).$
    Passing to the Grothendieck dual, we see that
    $$ \HH^{k-p}(X, \duBois_{X}^{p})\dual \simeq \HH^{n-k+p}(X, \bfR\SheafHom_{\cO_{X}}(\duBois_{X}^{p}, \omega_{X})), $$
    using the fact that $X$ is Cohen--Macaulay. Note that
    $$ \bfR\SheafHom_{\cO_{X}}(\duBois_{X}^{p}, \omega_{X}) \simeq \Ish_{X}^{n-p}$$
    and each term of the Ishida complex is $\bfR\Gamma$-acyclic by \cite{CoxLittleSchenck-ToricVar}*{Theorem 9.2.5}. This shows that
    $$ \HH^{n-k+p}(X, \Ish_{X}^{n-p}) = H^{n-k+p}(\Ish_{\cP}^{n-p}),$$
    since the right hand side is the complex obtained by taking the global sections in the complex $\Ish_{X}^{n-p}$. We point out that $\Ish_{\cP}^{n-p}$ is simply a complex of finite-dimensional vector spaces.

    For a line bundle (more generally, for a $\QQ$-Cartier divisor) $L$, the first Chern class of $L$ induces a morphism of (complexes of) mixed Hodge modules
    $$ c : \QQ_{X}^{H} \to \QQ_{X}^{H}(1)[2].$$
    We refer to \cite{RSW-Lyubeznik-Thom-Gysin}*{\S1.3} for this map. By taking cohomologies, we have a morphism $H^{k}(X, \QQ) \to H^{k+2}(X, \QQ)(1)$ between mixed Hodge structures. In particular, it induces
    $$ c : \gr_{F}^{p}H^{k}(X, \QQ) \to \gr_{F}^{p+1}H^{k+2}(X, \QQ). $$
    By taking the dual, we have the morphism
    $$ c\dual : \HH^{n-k+p-1}(X,\Ish_{X}^{n-p-1}) \to \HH^{n-k+p}(X,\Ish_{X}^{n-p}).$$
    The goal is to describe $c\dual$ purely in terms of the data of the fan $\cP$ when the line bundle $L$ is torus equivariant (more generally, when $L$ is a toric $\QQ$-Cartier divisor). We point out that this is not a serious assumption since every divisor on a toric variety is linearly equivalent to a torus equivariant one.
	
	\subsection{Total space of the line bundle} \label{subsec:total-space-line-bundle}
	Let $X$ be an $n$-dimensional toric variety and let $D = \sum \alpha_{\rho} S_{\rho}$ be an integral Cartier divisor on $X$. This means that for each maximal dimensional face $\sigma$, one has $u_{\sigma} \in M$ such that
	$$ \langle u_{\sigma}, \rho \rangle = \alpha_{\rho} \in \ZZ,$$
	where, for notational convenience, we identify $\rho$ with its primitive element in the ray. The total space $L\to X$ corresponding to $D$ is again a toric variety, and we describe this line bundle in terms of toric geometry, i.e., cones and fans.
	
	Let $\widetilde{N} = N \oplus \ZZ e_{n+1}$ and $\widetilde{M} = M \oplus \ZZ e_{n+1}\sta$. Let $V = M_{\RR}$ and $\widetilde{V} = \widetilde{M}_{\RR}$. For each maximal dimensional face $\sigma$, we consider
	$$ \widetilde{\sigma} = \{(x, t) \in N \oplus \ZZ e_{n+1} : t \geq u_{\sigma}(x) , x \in \sigma\}.$$
	We have a fan $\cQ$ in $\widetilde{N}$ whose maximal dimensional faces are $\widetilde{\sigma}$. We point out that the faces of $\cQ$ is either $\widetilde{\tau}$ for $\tau \in \cP$ (defined analogously as $\widetilde{\sigma}$), or
    $$ \widehat{\tau} = \{ (x, t) \in N \oplus \ZZ e_{n+1} : t = u_{\tau}(x), x \in \tau\}.$$
    We denote by $\widetilde{\cP} = \{ \widetilde{\tau} : \tau \in \cP\}$ and $\widehat{\cP} = \{ \widehat{\tau} : \tau \in \cP\}$. We get
    $$ \cQ_{l} = \widetilde{\cP}_{l-1} \cup \widehat{\cP}_{l}.$$
    Let the corresponding toric variety of the fan $\cQ$ be $L$. We clearly have a projection map $\pi \colon L \to X$ coming from the projection $\widetilde{N} \to N$.
	
	\begin{lemm} \label{lemm:description-total-space}
		$\pi \colon L \to X$ is the total space corresponding to the torus-invariant divisor $D$.
	\end{lemm}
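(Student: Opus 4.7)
The plan is to compute the affine charts of $L$ explicitly and check that the resulting $\mathbb{A}^{1}$-bundle structure glues to $\cO_{X}(D)$ via the expected \v{C}ech cocycle. First, for a maximal cone $\sigma \in \cP_{n}$, I would compute the dual cone $\widetilde{\sigma}\dual$. A pair $(u, s) \in \widetilde{M}_{\RR}$ lies in $\widetilde{\sigma}\dual$ iff $\langle u, x\rangle + s t \geq 0$ for all $x \in \sigma$ and all $t \geq u_{\sigma}(x)$. Letting $t \to \infty$ forces $s \geq 0$, and then minimising over $t$ reduces the constraint to $u + s u_{\sigma} \in \sigma\dual$. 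Hence
$$\widetilde{\sigma}\dual \cap \widetilde{M} = \{(u,s) \in \widetilde{M} : s \geq 0,\ u + s u_{\sigma} \in \sigma\dual \cap M\}.$$

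Next, the substitution $(u,s) \mapsto (u + s u_{\sigma}, s)$ is a bijection of semigroups onto $(\sigma\dual \cap M) \oplus \ZZ_{\geq 0}$, so
$$\CC[\widetilde{\sigma}\dual \cap \widetilde{M}] \simeq \CC[\sigma\dual \cap M][z_{\sigma}], \qquad z_{\sigma} := \chi^{(-u_{\sigma},1)}.$$
Note that $(-u_{\sigma},1)$ lies in $\widetilde{\sigma}\dual$ since $s = 1 \geq 0$ and $-u_{\sigma} + 1 \cdot u_{\sigma} = 0 \in \sigma\dual$, so $z_{\sigma}$ is a regular function on $U_{\widetilde{\sigma}}$. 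This produces a canonical isomorphism $U_{\widetilde{\sigma}} \simeq U_{\sigma} \times \mathbb{A}^{1}$ under which the toric morphism induced by the projection $\widetilde{N} \to N$ becomes the first projection. Consequently $\pi \colon L \to X$ is a Zariski-local $\mathbb{A}^{1}$-bundle with fiber coordinate $z_{\sigma}$ on $\pi^{-1}(U_{\sigma})$.

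To identify the bundle with $\cO_{X}(D)$, I would compute the gluing on the overlap $U_{\sigma} \cap U_{\sigma'}$ for two maximal cones. Directly from the definition of the $z_{\sigma}$,
$$z_{\sigma}/z_{\sigma'} = \chi^{(-u_{\sigma},1)-(-u_{\sigma'},1)} = \chi^{u_{\sigma'}-u_{\sigma}},$$
which (up to the standard inversion from passing between total-space conventions $\Spec\Sym$ versus $\Spec\Sym(\cdot)\dual$) is precisely the \v{C}ech $1$-cocycle of the line bundle $\cO_{X}(D)$ corresponding to the local data $\{(U_{\sigma}, u_{\sigma})\}_{\sigma \in \cP_{n}}$, under the convention $D|_{U_{\sigma}} = \Div(\chi^{u_{\sigma}})$ and the trivialisation $\cO_{X}(D)|_{U_{\sigma}} \simeq \cO_{U_{\sigma}}\cdot \chi^{-u_{\sigma}}$. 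Since the cocycles agree, $L \to X$ is isomorphic over $X$ to the total space of $\cO_{X}(D)$.

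The main point requiring care is the last step: pinning down exactly which sign convention for the transition functions of $\cO_{X}(D)$ matches $\chi^{u_{\sigma'}-u_{\sigma}}$, so that one obtains $\cO_{X}(D)$ rather than $\cO_{X}(-D)$. This is purely bookkeeping and can be unambiguously checked on a single test case, for instance $X = \PP^{1}$ with $D$ a single torus-fixed point, where the construction yields $\Tot(\cO_{\PP^{1}}(1))$. The auxiliary faces $\widehat{\tau} \in \cQ$ play no role in the argument: they cut out the zero section $X \hookrightarrow L$ and are forced to be in $\cQ$ as soon as $\cQ$ is verified to be a fan.
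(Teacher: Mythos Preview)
Your argument is correct and follows essentially the same route as the paper: compute $\widetilde{\sigma}\dual$, identify $U_{\widetilde{\sigma}} \simeq U_{\sigma} \times \mathbb{A}^{1}$, and read off the transition functions on overlaps. The only cosmetic difference is that the paper works with local nonvanishing sections $s_{\sigma}$ (sending $\chi^{\phi + t e_{n+1}\sta} \mapsto \chi^{\phi + t u_{\sigma}}$) rather than your dual fiber coordinates $z_{\sigma} = \chi^{(-u_{\sigma},1)}$, and it records the scalar action on the fibers explicitly; the resulting cocycle $\chi^{u_{\sigma'}-u_{\sigma}}$ is the same.
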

	\begin{proof}
		First, we observe that for $\tau\in \ZZ$ and for $\phi \in M$,
		$$ \phi + \tau e_{n+1}\sta \in \widetilde{\sigma}\dual \cap \widetilde{M} \quad \text{if and only if} \quad \tau u_{\sigma} + \phi \in \sigma\dual \cap M, \text{ and } \tau \geq 0.$$
		Suppose $\phi + \tau e_{n+1}\sta \in \widetilde{\sigma}\dual$. First, $e_{n+1} \in \widetilde{\sigma}$, so $\tau \geq 0$. Also, $(x, u_{\sigma}(x)) \in \widetilde{\sigma}$ for $x \in \sigma$. This says $\phi(x) + u_{\sigma}(x)  \tau \geq 0$ for all $x \in \sigma$. Hence $\phi + \tau u_{\sigma} \in \sigma\dual$. The other direction can be verified similarly.
		Hence, we have
		\[ \widetilde{\sigma}\dual \cap \widetilde{M} \simeq (\sigma\dual \cap M) + \ZZ_{\geq 0} \cdot (-u_{\sigma} + e_{n+1}\sta). \]
		Let $U_{\widetilde{\sigma}}$ and $U_{\sigma}$ be the affine charts of $L$ and $X$ corresponding to the faces $\widetilde{\sigma}$ and $\sigma$, respectively. One can easily see that $\pi^{-1}(U_{\sigma}) = U_{\widetilde{\sigma}}$ and that the local description of the morphism $\pi : U_{\widetilde{\sigma}} \to U_{\sigma}$ is given by
		$$  \CC[\sigma\dual \cap M] \to \CC[\widetilde{\sigma}\dual \cap \widetilde{M}], \qquad \chi^{\phi} \mapsto \chi^{\phi}.$$
		We see that $U_{\widetilde{\sigma}}$ is isomorphic to $U_{\sigma} \times \CC$ from the description of $\widetilde{\sigma}\dual \cap \widetilde{M}$. We fix a non-vanishing section $s_{\sigma} : U_{\sigma} \to U_{\widetilde{\sigma}}$ given by
		$$ \CC[\widetilde{\sigma}\dual \cap \widetilde{M}] \to \CC[\sigma\dual \cap M], \qquad \chi^{\phi + t e_{n+1}\sta} \mapsto \chi^{\phi + t u_{\sigma}}.$$
		The multiplication on each fibers (i.e., the map $L \times \CC\to L$) is locally given by the morphisms
		$$ \CC[\widetilde{\sigma}\dual \cap \widetilde{M}] \to \CC[\ZZ_{\geq0}] \otimes \CC[\widetilde{\sigma}\dual \cap \widetilde{M}], \qquad \chi^{\phi} \mapsto \chi^{\langle e_{n+1}, \phi \rangle} \otimes \chi^{\phi}.$$
		We consider the overlaps. Let $\sigma_{1}$ and $\sigma_{2}$ be two maximal dimensional faces and let $\tau= \sigma_{1} \cap \sigma_{2}$. We compare the two non-vanishing sections $s_{\sigma_{1}}$ and $s_{\sigma_{2}}$, restricted to $U_{\tau}$. Note that the morphism given by
        \begin{align*}
           \CC [\widetilde{\tau}\dual \cap \widetilde{M}] \to \CC[\widetilde{\tau}\dual \cap \widetilde{M}] \otimes \CC[\ZZ_{\geq 0}] \xrightarrow{(s_{\sigma_1}, g)} \CC[\tau\dual \cap M], \\ \chi^{\phi + t e_{n+1}\sta} \mapsto \chi^{\phi + te_{n+1}\sta} \otimes \chi^{t}  \mapsto \chi^{\phi + tu_{\sigma_{1}}} \cdot \chi^{t(u_{\sigma_{2}} - \sigma_{1})} 
        \end{align*}
		is exactly $s_{\sigma_{2}}$. This shows that $s_{\sigma_{2}} = g \cdot s_{\sigma_{1}}$ where $g : U_{\tau} \to \CC\sta$ is given by the invertible function $\chi^{u_{\sigma_{2}} - u_{\sigma_{1}}} \in \CC[\tau\dual \cap M]$. This exactly show that $L$ is the line bundle corresponding to the divisor $D= \sum a_{\rho} S_{\rho}$.
	\end{proof}

	\subsection{The Atiyah class} \label{subsec:Atiyah-class-smooth}
	Let $X$ be a complex variety and $\pi \colon L \to X$ be a line bundle. We view the Chern class $c = c_{1}(L)$ as a morphism (see \cite{RSW-Lyubeznik-Thom-Gysin}*{\S1.3})
	$$ c \colon \QQ_{X}^{H} \to \QQ_{X}^{H}(1)[2],$$
	which gives $c\colon\duBois_{X}^{p} \to \duBois_{X}^{p+1}[1]$ after taking the graded de Rham complex (see \cite{LCDTV1}*{\S2.4}). This in particular gives an extension of $\duBois_{X}^{p}$ by $\duBois_{X}^{p+1}$. First, we show that if $X$ is smooth, the sequence
	$$ 0 \to \Omega_{X}^{p+1} \xrightarrow{i} \Omega_{L}^{p+1}(\log X)|_{X} \xrightarrow{r}\Omega_{X}^{p} \to 0 $$
	coincides with the extension class given by $c$. This can be checked using the description of the Atiyah class in terms of \v{C}ech cocycles $\{ d \log g_{\alpha\beta}\}$ \cite{Huybrechts-complexgeometry}*{Definition 4.2.18}. Here $\{g_{\alpha\beta}\}$ is the \v{C}ech cocycle defining the class of $L$. The map $i$ is given by $i(\alpha) = \pi\sta \alpha|_{X}$ and $r$ is given by the Poincaré residue map. On the open subset $U_{\alpha} \subset X$ where  $L$ is trivialized by the section $e_{\alpha}$, we have the local splitting $\phi_{\alpha}$ of $r$ on $U_{\alpha}$ given by
	$$  \beta \mapsto d\log e_{\alpha}\sta \wedge \beta $$
	where $e_{\alpha}\sta : L|_{U_{\alpha}} \to \CC$ is the dual of $e_{\alpha}$. Then one can easily see that $\phi_{\alpha} - \phi_{\beta}: \Omega_{U_{\alpha\beta}}^{p} \to \Omega_{U_{\alpha\beta}}^{p+1} $ is given by wedging with $d\log g_{\alpha\beta}$.
	
	It is an easy exercise that by applying $\SheafHom_{\cO_{X}}(\cdot, \omega_{X})$ to the sequence above, we get
	$$ 0 \to \Omega_{X}^{n-p} \to \Omega_{L}^{n-p} (\log X)|_{X} \to \Omega_{X}^{n-p-1} \to 0,$$
	with the same morphisms.
	
	We return to the case when $X$ is a (possibly singular) toric variety and $L$ is a line bundle on $X$. Let $X^{\circ}$ be the complement of the codimension 2 torus-invariant closed subvarieties of $X$ and denote by $j : X^{\circ} \to X$ the inclusion. One can see that $X^\circ$ is the (smooth) toric variety associated to the one-dimensional skeleton on the fan $\cP$ of $X$. The morphism $c : \duBois_{X}^{p} \to \duBois_{X}^{p+1}[1]$ gives us an extension
	$$ 0 \to  \duBois_{X}^{p+1} \to E \to \duBois_{X}^{p} \to 0.$$
	Note that $\duBois_{X}^{p}$ and $\duBois_{X}^{p+1}$ are both $\cS_{2}$, hence $E$ is also $\cS_{2}$ using the description of depth by $\SheafExt$-vanishing. Therefore, we have $E = j\lsta (E|_{X^{\circ}})$. Restricting the sequence above to $X^{\circ}$, we see that the short exact sequence becomes
	$$ 0 \to \Omega_{X^{\circ}}^{p+1} \to \Omega_{L^{\circ}}^{p+1}(\log X^{\circ})|_{X^{\circ}} \to \Omega_{X^{\circ}}^{p} \to 0,$$
	using the description of the Atiyah class on smooth varieties. From the discussion in \S\ref{section:diff-forms-on-toric}, this shows that we have a commutative diagram
    $$
    \begin{tikzcd}
        0 \ar[r] & \bigwedge^{p+1}V \otimes \cO_{X} \ar[r] \ar[d] & \bigwedge^{p+1} \widetilde{V} \otimes \cO_{X} \ar[r] \ar[d]& \bigwedge^{p} V \otimes \cO_{X} \ar[r] \ar[d]& 0 \\
        0 \ar[r] & \bigoplus_{\rho \in \cP_{1}} \bigwedge^{p}\widetilde{\rho}^{\perp} \otimes \cO_{D_{\rho}} \ar[r] & \bigoplus_{\rho \in \cP_{1}} \bigwedge^{p} \widehat{\rho}^{\perp} \otimes \cO_{D_{\rho}} \ar[r] & \bigoplus_{\rho \in \cP_{1}} \bigwedge^{p-1} \widetilde{\rho}^{\perp} \otimes \cO_{D_{\rho}} \ar[r] & 0 
    \end{tikzcd}
    $$
    and the kernels of the vertical arrows give the extension class
    $$ 0 \to \duBois_{X}^{p+1} \to E \to \duBois_{X}^{p} \to 0.$$

    \subsection{The Grothendieck dual of the Atiyah class} \label{subsec:Gro-dual-Atiyah-class}
    We finally describe the Grothendieck dual of the Atiyah class in terms of Ishida-type complexes. Before that, we set-up some notation. For each $\tau \in \cP$, we define integers
    $$ a_{\tau} := \# \frac{\langle \widetilde{\tau} \rangle \cap \widetilde{N}}{\ZZ \cdot e_{n+1} + \left( \langle \widehat{\tau} \rangle \cap \widetilde{N} \right)}$$
    as in Definition \ref{defi:a_mu}.
    \begin{rema} \label{rema:Gro-dual-Atiyah-linalg}
    From the discussion in Lemma \ref{lemm:vector_multiply_by_amu} and \ref{lemm:vector_multiply_amu2}, we have the followings:
    	\begin{enumerate}
    		\item $a_{\tau} n_{\widehat{\tau}, \widetilde{\tau}} = e_{n+1} \mod \langle \widehat{\tau}\rangle \cap \widetilde{N}$.
    		\item For $\mu \subset \tau$ in $\cP$ such that $d_{\mu}+1 = d_{\tau}$, we have $n_{\mu, \tau} = n_{\widetilde{\mu}, \widetilde{\tau}} \mod \langle \widetilde{\mu}\rangle \cap \widetilde{N}$.
    		\item $ a_{\mu} n_{\widetilde{\mu}, \widetilde{\tau}} = 
            a_{\tau} n_{\widehat{\mu}, \widehat{\tau}} \mod \langle \widetilde{\mu} \rangle \cap \widetilde{N} $.
    	\end{enumerate}
    \end{rema}
    
    From here, we see that we have a short exact sequence of complexes as follows:
    $$ \begin{tikzcd}
    	0 \ar[r] & \bigwedge^{p+1} V \otimes \cO_{X} \ar[r] \ar[d, "\varphi_{\widetilde{0},\widetilde{\rho}}"]  & \bigwedge^{p+1}\widetilde{V} \otimes \cO_{X} \ar[r, "\varphi_{\widehat{0}, \widetilde{0}}"] \ar[d, "\varphi_{\widehat{0}, \widehat{\rho}}"]& \bigwedge^{p}V \otimes \cO_{X} \ar[r] \ar[d, "\varphi_{\widetilde{0},\widetilde{\rho}}"] & 0 \\
    	0 \ar[r] & \bigoplus_{\rho \in \cP_{1}} \bigwedge^{p} \widetilde{\rho}^{\perp} \otimes \cO_{S_{\rho}} \ar[r, "a_{\rho}"] \ar[d] & \bigoplus_{\rho \in \cP_{1}} \bigwedge^{p} \widehat{\rho}^{\perp} \otimes \cO_{S_{\rho}} \ar[r, "-\varphi_{\widehat{\rho}, \widetilde{\rho}}"] \ar[d] & \bigoplus_{\rho \in \cP_{1}} \bigwedge^{p-1} \widetilde{\rho}^{\perp} \otimes \cO_{S_{\rho}} \ar[r] \ar[d] & 0 \\
    	& \vdots \ar[d] & \vdots \ar[d] & \vdots \ar[d] & \\
    	0 \ar[r] & \bigoplus_{\tau \in \cP_{l}} \bigwedge^{p+1-l} \widetilde{\tau}^{\perp} \otimes \cO_{S_{\tau}} \ar[r, "a_{\tau}"] \ar[d] &  \bigoplus_{\tau \in \cP_{l}} \bigwedge^{p+1-l} \widehat{\tau}^{\perp} \otimes \cO_{S_{\tau}} \ar[r, "(-1)^{l} \varphi_{\widehat{\tau}, \widetilde{\tau}}"] \ar[d]& \bigoplus_{\tau \in \cP_{l}} \bigwedge^{p-l} \widetilde{\tau}^{\perp} \otimes \cO_{S_{\tau}} \ar[r] \ar[d]& 0 \\
    	& \vdots & \vdots& \vdots  
    \end{tikzcd}$$
    Note that the first and third columns of this exact sequence are isomorphic to $\Ish_{X}^{p+1}$ and $\Ish_{X}^{p}$ respectively, by Remark \ref{rema:Gro-dual-Atiyah-linalg} (2). Note that $\widetilde{\tau}^{\perp} = \tau^{\perp} \subset V$. We denote the middle column by $\Ish_{X, D}^{p+1}$.

    \begin{prop} \label{prop:Gro-dual-Lefschetz-integral}
        As in the set-up of \S\ref{subsec:total-space-line-bundle}, the short exact sequence of complexes
        $$ 0 \to \Ish_{X}^{p+1} \to \Ish_{X, D}^{p+1} \to \Ish_{X}^{p} \to 0$$
        is Grothendieck dual to
        $$ 0 \to \duBois_{X}^{n-p} \to E \to \duBois_{X}^{n-p-1} \to 0,$$
        where the extension class of $\duBois_{X}^{n-p-1}$ by $\duBois_{X}^{n-p}$ is given by the Chern class $c_{1}(L)$ of $L$.
    \end{prop}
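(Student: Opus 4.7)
The plan is to apply $\bfR\SheafHom_{\cO_{X}}(-, \omega_{X})$ to the short exact sequence of complexes constructed in \S\ref{subsec:Gro-dual-Atiyah-class} and identify the result with the Chern class extension via restriction to the smooth toric open $X^{\circ}$ and $\cS_{2}$-extension. First I would verify that $0 \to \Ish_{X}^{p+1} \to \Ish_{X,D}^{p+1} \to \Ish_{X}^{p} \to 0$ is exact as a sequence of complexes. At each degree $l$ and for each $\tau \in \cP_{l}$, the horizontal row reduces to the exterior-power sequence derived from the short exact sequence of $\RR$-vector spaces
$$ 0 \to \widetilde{\tau}^{\perp} \to \widehat{\tau}^{\perp} \to \RR \to 0, $$
where the right arrow is pairing with $n_{\widehat{\tau}, \widetilde{\tau}}$. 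Commutativity of the squares is already built into the construction through the scalars $a_{\tau}$ and Lemma \ref{lemm:vector_multiply_amu2}, while the identification of the outer columns with $\Ish_{X}^{p+1}$ and $\Ish_{X}^{p}$ uses $\widetilde{\tau}^{\perp} = \tau^{\perp}$ together with Remark \ref{rema:Gro-dual-Atiyah-linalg}(2).

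Applying $\bfR\SheafHom_{\cO_{X}}(-, \omega_{X})$ and invoking Proposition \ref{prop:Gro-dual-of-Ishida-is-duBois} on the outer terms produces a distinguished triangle
$$ \duBois_{X}^{n-p} \to E' \to \duBois_{X}^{n-p-1} \xrightarrow{+1}, $$
where $E' := \bfR\SheafHom_{\cO_{X}}(\Ish_{X,D}^{p+1}, \omega_{X})$. The long exact sequence of cohomology sheaves, together with the fact that the outer terms are concentrated in degree zero, forces $\cH^{i}(E') = 0$ for $i \neq 0$ and yields a short exact sequence $0 \to \duBois_{X}^{n-p} \to \cH^{0}(E') \to \duBois_{X}^{n-p-1} \to 0$. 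A standard local-cohomology argument applied to this extension shows that $\cH^{0}(E')$ inherits the $\cS_{2}$ property, so $\cH^{0}(E') \simeq j\lsta(\cH^{0}(E')|_{X^{\circ}})$, where $j: X^{\circ} \hookrightarrow X$ is the inclusion.

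To identify $\cH^{0}(E')|_{X^{\circ}}$, I would use that restriction to $X^{\circ}$ kills all the $\cO_{S_{\tau}}$-terms for $\tau \in \cP_{l}$ with $l \geq 2$, so the middle column of $\Ish_{X,D}^{p+1}$ becomes a two-term surjective complex whose kernel equals $\Omega_{L^{\circ}}^{p+1}(\log X^{\circ})|_{X^{\circ}}$ by the smooth computation in \S\ref{subsec:Atiyah-class-smooth}. The perfect pairing of logarithmic differentials on the smooth pair $(L^{\circ}, X^{\circ})$,
$$ \Omega_{L^{\circ}}^{p+1}(\log X^{\circ}) \otimes \Omega_{L^{\circ}}^{n-p}(\log X^{\circ}) \to \omega_{L^{\circ}}(X^{\circ}), $$
combined with the adjunction $\omega_{L^{\circ}}(X^{\circ})|_{X^{\circ}} \simeq \omega_{X^{\circ}}$, identifies $\cH^{0}(E')|_{X^{\circ}}$ with $\Omega_{L^{\circ}}^{n-p}(\log X^{\circ})|_{X^{\circ}}$. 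Since \S\ref{subsec:Atiyah-class-smooth} shows that the extension $0 \to \Omega_{X^{\circ}}^{n-p} \to \Omega_{L^{\circ}}^{n-p}(\log X^{\circ})|_{X^{\circ}} \to \Omega_{X^{\circ}}^{n-p-1} \to 0$ on $X^{\circ}$ has class $c_{1}(L)|_{X^{\circ}}$, and since all three sheaves in the global dualized SES are $\cS_{2}$ and equal $j\lsta$ of their restrictions to $X^{\circ}$, the global SES is the $j\lsta$-pushforward of the Atiyah class sequence on $X^{\circ}$, and hence realizes the Chern class extension $E$.

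The main obstacle is the identification of the middle term on $X^{\circ}$: one must verify that the scalars $a_{\rho}$ appearing in the differentials of $\Ish_{X,D}^{p+1}$, together with the sign conventions of Grothendieck duality and the Poincaré residue, produce precisely the inclusion/residue maps of \S\ref{subsec:Atiyah-class-smooth}, not merely up to scaling or sign. Making this compatibility explicit at the level of complexes—rather than just up to quasi-isomorphism of extensions—is the most delicate part of the proof.
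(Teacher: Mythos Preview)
Your proposal is correct and follows essentially the same strategy as the paper's proof: dualize, use Proposition \ref{prop:Gro-dual-of-Ishida-is-duBois} on the outer terms to conclude that the middle term is a sheaf which is $\cS_{2}$, and then reduce the identification of the extension class to the smooth open $X^{\circ}$, where it is handled by the discussion in \S\ref{subsec:Atiyah-class-smooth}. Your perfect-pairing argument for identifying the dual of the middle term on $X^{\circ}$ is one way to spell out the ``easy exercise'' noted there, and your concern about the scalars $a_{\rho}$ and signs is legitimate but is exactly what the explicit two-row diagram at the end of \S\ref{subsec:Atiyah-class-smooth} is set up to address.
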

    \begin{proof}
        From Proposition \ref{prop:Gro-dual-of-Ishida-is-duBois}, we have $\bfR\SheafHom_{\cO_{X}} (\Ish_{X}^{p}, \omega_{X}) \simeq \duBois_{X}^{n-p}$. By dualizing the first exact sequence, we see that $\bfR \SheafHom_{\cO_{X}} (\Ish_{X, D}^{p+1}, \omega_{X})$ is also a sheaf, moreover, $\cS_{2}$. Hence, it remains to show the statement in a complement of a codimension 2 subset. Then we see that the assertion follows from the discussion in \S\ref{subsec:Atiyah-class-smooth}.
    \end{proof}

    \subsection{From $\ZZ$-divisors to $\QQ$-divisors}
    In this section, we assert that the same assertion works for $\QQ$-divisors as well. We consider $D= \sum \alpha_{\rho} S_{\rho}$ a $\QQ$-Cartier $\QQ$-divisor on $X$. Then for each face $\tau \in \cP$, we get $u_{\tau} \in M_{\QQ}$ such that $\langle u_{\tau}, \rho \rangle = \alpha_{\rho}$ for all rays $\rho \subset \tau$. Hence, we get a fan $\cQ$ in $\widetilde{N} = N \oplus \ZZ e_{n+1}$ in a similar manner as \S\ref{subsec:total-space-line-bundle}. We have a morphism $\pi : L \to X$ similarly, but this is \textit{not} a geometric line bundle on $X$. However, the Chern class $c_{1}(D)$ makes perfect sense, after multiplying by a certain integer to make $D$ integral and dividing back.
    
    Let $C$ be a positive integer such that $\sum C \alpha_{\rho} S_{\rho}$ is a Cartier $\ZZ$-divisor. For $\tau \in \cP$, we define
    \begin{align*}
        \widetilde{\tau}_C & :=  \{(x, t) \in N \oplus \ZZ e_{n+1} : t \geq C u_{\sigma}(x), x \in \tau \} \\
        \widehat{\tau}_C &:= \{ (x, t) \in N \oplus \ZZ e_{n+1} : t = C u_{\sigma}(x) , x \in \tau \}.
    \end{align*}
    We define
    $$ a_{\tau}:= \#\frac{\langle \widetilde{\tau} \rangle \cap \widetilde{N}}{\ZZ \cdot e_{n+1} + \left( \langle \widehat{\tau} \rangle \cap \widetilde{N} \right)}, \qquad b_{\tau} := \#\frac{\langle \widetilde{\tau}_C \rangle \cap \widetilde{N}}{\ZZ \cdot e_{n+1} + \left( \langle \widehat{\tau}_C \rangle \cap \widetilde{N} \right)}$$

    Here is our main lemma.
    \begin{lemm} \label{lemm:relate-C-times-and-Q-divisor}
        Let $\Ish_{X, D}^{p+1}$ and $\Ish_{X, CD}^{p+1}$ be defined analogously. We have an isomorphism of extensions
        $$ \begin{tikzcd}
            0 \ar[r] & \Ish_{X}^{p+1} \ar[r] \ar[d, equal] & \Ish_{X, D}^{p+1} \ar[r] \ar[d, "\varphi", "\simeq"'] & \Ish_{X}^{p} \ar[r]\ar[d, equal] & 0 \\
            0 \ar[r] & \Ish_{X}^{p+1} \ar[r] & \Ish_{X, CD} \ar[r, "C"] & \Ish_{X}^{p} \ar[r] & 0.
        \end{tikzcd} $$
        The $C$ in the bottom row means that the constant $C$ is multiplied from the one in Proposition \ref{prop:Gro-dual-Lefschetz-integral}.
    \end{lemm}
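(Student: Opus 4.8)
The plan is to write down $\varphi$ explicitly and then verify the three requirements on it: that it is a morphism of complexes, that the left square (with identities on $\Ish_X^{p+1}$) commutes, and that the right square commutes with the bottom surjection rescaled by $C$. Recall from \S\ref{subsec:Gro-dual-Atiyah-class} that $\Ish_{X,D}^{p+1}$ and $\Ish_{X,CD}^{p+1}$ are produced by the same recipe, as the middle columns of the analogous three-column short exact sequences of complexes over $\widetilde N$: in cohomological degree $l$ their $\tau$-components are $\bigwedge^{p+1-l}\widehat\tau^\perp\otimes\cO_{S_\tau}$ and $\bigwedge^{p+1-l}\widehat\tau_C^\perp\otimes\cO_{S_\tau}$, while the common sub $\Ish_X^{p+1}$ and quotient $\Ish_X^p$ have $\tau$-components $\bigwedge^{p+1-l}\tau^\perp\otimes\cO_{S_\tau}$ and $\bigwedge^{p-l}\tau^\perp\otimes\cO_{S_\tau}$ with $\tau^\perp=\widetilde\tau^\perp=\widetilde\tau_C^\perp$. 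The only data distinguishing the two middle columns are the contraction vectors $n_{\widehat\mu,\widehat\tau}$ versus $n_{\widehat\mu_C,\widehat\tau_C}$ in the differentials and the integers $a_\tau$ versus $b_\tau$ in the maps to and from $\Ish_X^{p+1}$ and $\Ish_X^p$; and since $CD$ is an honest Cartier divisor, one computes $b_\tau=1$ for every $\tau$.

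The isomorphism $\varphi$ will come from the $\RR$-linear automorphism $g$ of $\widetilde N_\RR$ that is the identity on $N_\RR$ and sends $e_{n+1}\mapsto Ce_{n+1}$. It is immediate from the definitions that $g$ maps $\langle\widehat\tau\rangle$ onto $\langle\widehat\tau_C\rangle$ and preserves $\langle\widetilde\tau\rangle=\langle\widetilde\tau_C\rangle$ for every $\tau\in\cP$, so its inverse transpose $g^{-\vee}\colon\widetilde M_\RR\to\widetilde M_\RR$, $(\phi,s)\mapsto(\phi,s/C)$, is the identity on $\tau^\perp$ and restricts to an isomorphism $h_\tau\colon\widehat\tau^\perp\xrightarrow{\sim}\widehat\tau_C^\perp$ that is multiplication by $1/C$ on the line $\widehat\tau^\perp/\tau^\perp$. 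I will take $\varphi$, in degree $l$ and on the $\tau$-component, to be $\tfrac1{a_\tau}\bigl(\textstyle\bigwedge^{p+1-l}h_\tau\bigr)\otimes\id_{\cO_{S_\tau}}$ (since $a_0=1$ this is compatible at the $\cO_X$-level); it is visibly a degreewise isomorphism, and it commutes with the structure-sheaf restriction maps trivially, so only the linear-algebra part of everything needs to be checked.

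The two square conditions are then short computations from the above properties of $g^{-\vee}$ and Remark \ref{rema:Gro-dual-Atiyah-linalg}(1). For the left square: the inclusion of the $\tau$-component of $\Ish_X^{p+1}$ into $\Ish_{X,D}^{p+1}$ is $a_\tau$ times the honest inclusion $\bigwedge^{p+1-l}\tau^\perp\hookrightarrow\bigwedge^{p+1-l}\widehat\tau^\perp$, into $\Ish_{X,CD}^{p+1}$ is the honest inclusion (as $b_\tau=1$), and $h_\tau$ is the identity on $\tau^\perp$; so post-composing the first with $\varphi$ gives the second. For the right square: the surjection onto the $\tau$-component of $\Ish_X^p$ is $\pm$ contraction by $n_{\widehat\tau,\widetilde\tau}\equiv\tfrac1{a_\tau}e_{n+1}$ on the $D$-side and by $n_{\widehat\tau_C,\widetilde\tau_C}\equiv e_{n+1}$ on the $CD$-side (Remark \ref{rema:Gro-dual-Atiyah-linalg}(1)); since $g^{-\vee}$ is the identity on the target $\bigwedge^{p-l}\tau^\perp$ and $g^{-1}e_{n+1}=\tfrac1C e_{n+1}$, the factor $\tfrac1{a_\tau}$ in $\varphi$ makes $\varphi$ followed by the $CD$-side surjection equal to $\tfrac1C$ times the $D$-side surjection, which is exactly the assertion that the bottom map is $C$ times the one in Proposition \ref{prop:Gro-dual-Lefschetz-integral}.

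The main obstacle is the morphism-of-complexes property. Using the identity $\bigl(\textstyle\bigwedge h_\mu\bigr)\circ\iota_v=\iota_{gv}\circ\bigl(\textstyle\bigwedge h_\mu\bigr)$ (reading $g$ as the induced map $\widetilde N_\RR/\langle\widehat\mu\rangle\to\widetilde N_\RR/\langle\widehat\mu_C\rangle$) and the fact that the sign conventions in the two middle columns can be taken the same, being determined by the incidence poset of $\cP$ via Lemma \ref{lemm:anti-commutativity-complex}, the commutativity of $\varphi$ with the differential between the $\mu$- and $\tau$-components ($\mu\subset\tau$, $d_\mu+1=d_\tau$) comes down to the single vector congruence
$$ a_\mu\, g\, n_{\widehat\mu,\widehat\tau}\ \equiv\ a_\tau\, n_{\widehat\mu_C,\widehat\tau_C}\pmod{\langle\widehat\mu_C\rangle}. $$
To establish this I would combine Remark \ref{rema:Gro-dual-Atiyah-linalg}(2),(3) applied to both $D$ and $CD$ (using $b_\bullet=1$) with the facts that $g\,n_{\mu,\tau}=n_{\mu,\tau}$ (because $n_{\mu,\tau}\in N$ and $g|_N=\id$) and that $n_{\widehat\mu,\widehat\tau}\in\langle\widehat\tau\rangle$, $n_{\widehat\mu_C,\widehat\tau_C}\in\langle\widehat\tau_C\rangle$: both sides of the congruence become $\equiv a_\mu n_{\mu,\tau}$ modulo $\langle\widetilde\mu\rangle$, and since both sides already lie in $\langle\widehat\tau_C\rangle$ and $\langle\widetilde\mu\rangle\cap\langle\widehat\tau_C\rangle=\langle\widehat\mu_C\rangle$ (because $e_{n+1}\notin\langle\widehat\tau_C\rangle$), the weaker congruence improves to the one displayed. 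As a sanity check, applying $\bfR\SheafHom_{\cO_X}(-,\omega_X)$ to the diagram turns it into the statement that the Grothendieck dual of $\Ish_{X,D}^{p+1}$ represents $\tfrac1C$ times the extension class of the dual of $\Ish_{X,CD}^{p+1}$, consistent with $c_1(CD)=C\cdot c_1(D)$ and Proposition \ref{prop:Gro-dual-Lefschetz-integral}.
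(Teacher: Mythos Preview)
Your argument is correct and follows the same route as the paper: the isomorphism $\varphi$ you write down is exactly the paper's map $a_\tau^{-1}b_\tau\,i_C$ (your $h_\tau$ is the paper's $i_C$), and the three verifications proceed identically via Remark~\ref{rema:Gro-dual-Atiyah-linalg}. Two small remarks: your observation that $b_\tau=1$ whenever $CD$ is Cartier is correct and streamlines things slightly compared to the paper, which keeps $b_\tau$ general; and in your final congruence both sides reduce to $a_\tau\,n_{\mu,\tau}\pmod{\langle\widetilde\mu\rangle}$ rather than $a_\mu\,n_{\mu,\tau}$ (the sharpening step via $\langle\widetilde\mu\rangle\cap\langle\widehat\tau_C\rangle=\langle\widehat\mu_C\rangle$ is then exactly right).
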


    This immediately shows the $\QQ$-divisor version of Proposition \ref{prop:Gro-dual-Lefschetz-integral}.

    \begin{coro} \label{coro:Gro-dual-Ishida-main-Q-div}
        Let $X$ be a proper toric variety and $D$ be a $\QQ$-Cartier $\QQ$-divisor on $X$. Then
        $$ 0 \to \Ish_{X}^{p+1} \to \Ish_{X, D}^{p+1} \to \Ish_{X}^{p} \to 0$$
        is Grothendieck dual to the extension class
        $$ 0 \to \duBois_{X}^{n-p} \to E \to \duBois_{X}^{n-p-1} \to 0$$
        given by the Chern class $c_{1}(D)$ of $D$.
    \end{coro}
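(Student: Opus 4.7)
The strategy is to reduce the $\QQ$-Cartier statement to the integral Cartier case of Proposition \ref{prop:Gro-dual-Lefschetz-integral}, using Lemma \ref{lemm:relate-C-times-and-Q-divisor} together with the bilinearity of Yoneda extensions.

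First I pick a positive integer $C$ so that $CD$ is an integral Cartier divisor. Applying Proposition \ref{prop:Gro-dual-Lefschetz-integral} to $CD$ shows that the natural short exact sequence
$$ 0 \to \Ish_{X}^{p+1} \to \Ish_{X, CD}^{p+1} \to \Ish_{X}^{p} \to 0 $$
is Grothendieck dual to an extension of $\duBois_{X}^{n-p-1}$ by $\duBois_{X}^{n-p}$ whose class in $\Ext^{1}(\duBois_{X}^{n-p-1},\duBois_{X}^{n-p})$ equals $c_{1}(CD)=C\cdot c_{1}(D)$, the last equality being the defining property of the Chern class of the $\QQ$-divisor $D$.

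Next, I read Lemma \ref{lemm:relate-C-times-and-Q-divisor} as saying that, after transporting through the isomorphism $\varphi$, the extensions $\Ish_{X,D}^{p+1}$ and $\Ish_{X,CD}^{p+1}$ share the same middle term and the same inclusion of $\Ish_{X}^{p+1}$, but the projection onto $\Ish_{X}^{p}$ for the $D$-version is $C$ times the natural projection of the $CD$-version. A standard fact about the $\End(\Ish_{X}^{p})$-action on $\Ext^{1}(\Ish_{X}^{p},\Ish_{X}^{p+1})$ via pullback of extensions asserts that replacing the surjection $f$ of an extension by $\alpha f$ (for $\alpha$ invertible) scales the Yoneda class by $\alpha^{-1}$. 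Hence $[\Ish_{X,D}^{p+1}]=C^{-1}\cdot[\Ish_{X,CD}^{p+1}]$ in $\Ext^{1}(\Ish_{X}^{p},\Ish_{X}^{p+1})$.

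Finally, since Grothendieck duality is a $\CC$-linear triangulated equivalence and therefore commutes with the scalar action on $\Ext^{1}$, taking Grothendieck duals produces an extension $0\to\duBois_{X}^{n-p}\to E\to\duBois_{X}^{n-p-1}\to 0$ of class $C^{-1}\cdot C\cdot c_{1}(D)=c_{1}(D)$, which is exactly the content of Corollary \ref{coro:Gro-dual-Ishida-main-Q-div}. The main obstacle is bookkeeping the $C^{\pm 1}$ scalars correctly; I would pin these down by unwinding the pullback definition of the Ext-action (identifying the extension with surjection $\alpha f$ as the pullback of the original along $\alpha^{-1}\colon\Ish_{X}^{p}\to\Ish_{X}^{p}$) and then invoking bilinearity of Yoneda composition, at which point the argument is essentially a one-line verification.
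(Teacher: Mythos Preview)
Your proposal is correct and follows exactly the route the paper intends: the paper states that Lemma \ref{lemm:relate-C-times-and-Q-divisor} ``immediately shows'' the corollary, and your write-up simply makes explicit the scalar bookkeeping (that rescaling the surjection by $C$ multiplies the Yoneda class by $C^{-1}$, and that Grothendieck duality is $\CC$-linear) which the paper leaves to the reader. There is no substantive difference in approach.
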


    \begin{proof}[Proof of Lemma \ref{lemm:relate-C-times-and-Q-divisor}]
        We describe the morphism $\phi$ term by term. Before that, we define
        $$ i_{C} : \widetilde{V} \to \widetilde{V}, \qquad \phi + t e_{n+1}\sta \mapsto \phi + C^{-1}t e_{n+1}\sta, \quad \text{for } \phi \in V.$$
        We first point out that
        \begin{align*}
            \widehat{\tau}^{\perp} & = \{ \phi + t e_{n+1}\sta : t \langle u_{\tau}, x\rangle+ \phi(x) = 0, \quad \text{for all } x \in \tau \} \\
            \widehat{\tau}_{C}^{\perp} & = \{ \phi + t e_{n+1}\sta : t C\langle u_{\tau}, x\rangle+ \phi(x) = 0, \quad \text{for all } x \in \tau \}.
        \end{align*}
        This description easily shows that $i_{C}$ sends $\widehat{\tau}^{\perp}$ isomorphically to $\widehat{\tau}_{C}^{\perp}$. The morphism $\phi$ is defined as
        $$ \begin{tikzcd}
            \bigwedge^{p+1} \widetilde{V} \otimes \cO_{X} \ar[r] \ar[d, "i_{C}"] & \bigoplus_{\rho \in \cP_{1}} \bigwedge^{p} \widehat{\rho}^{\perp} \otimes \cO_{S_{\rho}} \ar[r] \ar[d, "a_{\rho}^{-1}b_{\rho} i_{C}"] & \cdots \ar[r] \ar[d] & \bigoplus_{\tau \in \cP_{l}} \bigwedge^{p+1-l} \widehat{\tau}^{\perp} \otimes \cO_{S_{\tau}} \ar[d, "a_{\tau}^{-1} b_{\tau} i_{C}"] & \cdots \\
            \bigwedge^{p+1} \widetilde{V} \otimes \cO_{X} \ar[r] & \bigoplus_{\rho\in \cP_{1}} \bigwedge^{p} \widehat{\rho}_{C}^{\perp} \otimes \cO_{S_{\rho}} \ar[r] & \cdots \ar[r] & \bigoplus_{\tau \in \cP_{l}} \bigwedge^{p+1-l} \widehat{\tau}_{C}^{\perp} \otimes \cO_{S_{\tau}} \ar[r] & \cdots.
        \end{tikzcd} $$
        Here, the vertical arrows are defined term by term. For each face $\tau$, we have the morphisms $a_{\tau}^{-1} b_{\tau} i_{C} : \widehat{\tau}^{\perp} \to \widehat{\tau}_{C}^{\perp}$ tensored by identities on $\cO_{S_{\tau}}$. First, we show that this is indeed a homomorphism of chain complexes. For this, it is enough to fix $\mu \subset \tau$ in $\cP$ such that $d_{\tau} = d_{\mu} + 1$ and consider the commutativity of the diagram
        $$ \begin{tikzcd}
            \bigwedge^{p+1 - d_{\mu}} \widehat{\mu}^{\perp} \ar[r, "\varphi_{\widehat{\mu}, \widehat{\tau}}"] \ar[d, "a_{\mu}^{-1} b_{\mu} i_{C}"] & \bigwedge^{p+1 - d_{\tau}} \widehat{\tau}^{\perp} \ar[d, "a_{\tau}^{-1} b_{\tau} i_{C}"] \\
            \bigwedge^{p+1 - d_{\mu}} \widehat{\mu}_{C}^{\perp} \ar[r, "\varphi_{\widehat{\mu}_{C}, \widehat{\tau}_{C}}"]& \bigwedge^{p+1 - d_{\tau}} \widehat{\tau}_{C}^{\perp} 
        \end{tikzcd}$$
        Note that the commutativity follows from Remark \ref{rema:Gro-dual-Atiyah-linalg}, since we have $n_{\mu, \tau} = a_{\mu} a_{\tau}^{-1} n_{\widehat{\mu}, \widehat{\tau}} = b_{\mu} b_{\tau}^{-1} n_{\widehat{\mu}_{C}, \widehat{\tau}_{C}} \mod \langle \widetilde{\mu} \rangle \cap \widetilde{N}$.

        The commutativity of the left square almost follows by definition. Indeed, the diagram
        $$ \begin{tikzcd}
            \bigwedge^{p+1 - d_{\mu}} \widetilde{\mu}^{\perp} \ar[r, "a_{\mu}"] \ar[d, equal] & \bigwedge^{p+1-d_{\mu}} \widehat{\mu}^{\perp} \ar[d, "a_{\mu}^{-1} b_{\mu} i_{C}"] \\ \bigwedge^{p+1-d_{\mu}} \widetilde{\mu}^{\perp} \ar[r, "b_{\mu}"] & \bigwedge^{p+1 - d_{\mu}} \widehat{\mu}^{\perp}
        \end{tikzcd}$$
        commutes. For the right square, it is enough to check that
        $$ \begin{tikzcd}
            \bigwedge^{p+1-d_{\mu}} \widehat{\mu}^{\perp} \ar[r, "\varphi_{\widehat{\mu}, \widetilde{\mu}}"] \ar[d, "a_{\mu}^{-1} b_{\mu} i_{C}"] & \bigwedge^{p-d_{\mu}} \widetilde{\mu}^{\perp} \ar[d, equal] \\ \bigwedge^{p+1-d_{\mu}} \widehat{\mu}_{C}^{\perp} \ar[r, "C \cdot \varphi_{\widehat{\mu}_{C}, \widetilde{\mu}_{C} }"] & \bigwedge^{p-d_{\mu}} \widetilde{\mu}^{\perp}
        \end{tikzcd}$$
        commutes. This follows from the fact that $a_{\mu} n_{\widehat{\mu}, \widetilde{\mu}} = e_{n+1} \mod \langle \widehat{\mu} \rangle \cap \widetilde{N}$ and $b_{\mu} n_{\widehat{\mu}_{C}, \widetilde{\mu}_{C}} = e_{n+1} \mod \langle \widehat{\mu_{C}} \rangle \cap \widetilde{N}$ (see Remark \ref{rema:Gro-dual-Atiyah-linalg}).
    \end{proof}

    As a corollary, we are able to describe the morphisms $c_{1}(D)\dual : \HH^{l}(X, \Ish_{X}^{p}) \to \HH^{l+1}(X,\Ish_{X}^{p+1})$ fully in terms of the data of the fan $\cP$.

    \begin{coro} \label{coro:connecting-map-is-Lefschetz}
        The morphism $c_{1}(D)\dual : \HH^{l}(X, \Ish_{X}^{p}) \to \HH^{l+1}(X, \Ish_{X}^{p+1})$ is induced by the connecting homomorphism of the cohomologies induced by the following short exact sequence of complexes.
        $$ \begin{tikzcd}
    	0 \ar[r] & \bigwedge^{p+1} V \ar[r] \ar[d, "\varphi_{\widetilde{0},\widetilde{\rho}}"]  & \bigwedge^{p+1}\widetilde{V} \ar[r, "\varphi_{\widehat{0}, \widetilde{0}}"] \ar[d, "\varphi_{\widehat{0}, \widehat{\rho}}"]& \bigwedge^{p}V \ar[r] \ar[d, "\varphi_{\widetilde{0},\widetilde{\rho}}"] & 0 \\
    	0 \ar[r] & \bigoplus_{\rho \in \cP_{1}} \bigwedge^{p} \widetilde{\rho}^{\perp} \ar[r, "a_{\rho}"] \ar[d] & \bigoplus_{\rho \in \cP_{1}} \bigwedge^{p} \widehat{\rho}^{\perp} \ar[r, "-\varphi_{\widehat{\rho}, \widetilde{\rho}}"] \ar[d] & \bigoplus_{\rho \in \cP_{1}} \bigwedge^{p-1} \widetilde{\rho}^{\perp} \ar[r] \ar[d] & 0 \\
    	& \vdots \ar[d] & \vdots \ar[d] & \vdots \ar[d] & \\
    	0 \ar[r] & \bigoplus_{\tau \in \cP_{l}} \bigwedge^{p+1-l} \widetilde{\tau}^{\perp}  \ar[r, "a_{\tau}"] \ar[d] &  \bigoplus_{\tau \in \cP_{l}} \bigwedge^{p+1-l} \widehat{\tau}^{\perp}  \ar[r, "(-1)^{l} \varphi_{\widehat{\tau}, \widetilde{\tau}}"] \ar[d]& \bigoplus_{\tau \in \cP_{l}} \bigwedge^{p-l} \widetilde{\tau}^{\perp} \ar[r] \ar[d]& 0 \\
    	& \vdots & \vdots& \vdots  
    \end{tikzcd}$$
    \end{coro}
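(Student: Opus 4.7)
The plan is to derive this corollary as a direct consequence of Corollary \ref{coro:Gro-dual-Ishida-main-Q-div} together with the $\bfR\Gamma$-acyclicity of the terms of the Ishida and mixed Ishida complexes. In outline: (i) the hypercohomology connecting homomorphism of the short exact sequence of sheaf-level complexes in that corollary is, by Grothendieck--Serre duality, exactly $c_{1}(D)\dual$; (ii) each term of the complexes involved is $\bfR\Gamma$-acyclic, so hypercohomology is computed by global sections; (iii) properness of $X$ forces $\Gamma(X, \cO_{S_{\tau}}) = \CC$ for every torus-invariant subvariety $S_{\tau}$, and taking global sections turns the sheaf-level short exact sequence into precisely the finite-dimensional sequence displayed in the statement.

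For step (i), Corollary \ref{coro:Gro-dual-Ishida-main-Q-div} identifies
\[
0 \to \Ish_{X}^{p+1} \to \Ish_{X,D}^{p+1} \to \Ish_{X}^{p} \to 0
\]
as the Grothendieck dual of the extension $0 \to \duBois_{X}^{n-p} \to E \to \duBois_{X}^{n-p-1} \to 0$, whose class in $\Ext^{1}(\duBois_{X}^{n-p-1}, \duBois_{X}^{n-p})$ is $c_{1}(D)$. Applying $\bfR\Gamma(X, -)$ and invoking the standard identification of the connecting morphism of a distinguished triangle with the extension class viewed as a map in the derived category, the induced map
\[
c_{1}(D) : \HH^{n-l-1}(X, \duBois_{X}^{n-p-1}) \to \HH^{n-l}(X, \duBois_{X}^{n-p})
\]
is dual, via Grothendieck--Serre duality on the proper Cohen--Macaulay variety $X$ (where $\omega_{X}^{\bullet} \simeq \omega_{X}[n]$), to the connecting map $\HH^{l}(X, \Ish_{X}^{p}) \to \HH^{l+1}(X, \Ish_{X}^{p+1})$ arising from the first exact sequence.

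For steps (ii)--(iii), each term of both $\Ish_{X}^{\bullet}$ and $\Ish_{X,D}^{p+1}$ is a finite direct sum of sheaves of the shape $\bigwedge^{k} \tau^{\perp} \otimes \cO_{S_{\tau}}$ or $\bigwedge^{k} \widehat{\tau}^{\perp} \otimes \cO_{S_{\tau}}$, which are $\bfR\Gamma$-acyclic by \cite{CoxLittleSchenck-ToricVar}*{Theorem 9.2.5}. Hence the hypercohomology of each of the three complexes equals the cohomology of its global sections. Since $X$ is proper, every $S_{\tau}$ is a complete toric variety, so $\Gamma(X, \cO_{S_{\tau}}) = \CC$. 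Applying $\Gamma(X, -)$ term-by-term to the sheaf-level short exact sequence constructed in \S \ref{subsec:Gro-dual-Atiyah-class} (and used in Proposition \ref{prop:Gro-dual-Lefschetz-integral} and Corollary \ref{coro:Gro-dual-Ishida-main-Q-div}) therefore yields exactly the displayed short exact sequence of finite-dimensional vector-space complexes, whose connecting morphism is $c_{1}(D)\dual$ by step (i).

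The main step requiring care is (i): the identification of the hypercohomology connecting map with $c_{1}(D)\dual$. This is a formal consequence of the equivalence between $\Ext^{1}$ classes and distinguished triangles combined with Grothendieck--Serre duality, but one needs to keep track of signs and to use the fact that Corollary \ref{coro:Gro-dual-Ishida-main-Q-div} establishes the duality at the level of actual complexes rather than merely of extension classes, so that passing to global sections genuinely produces the stated connecting morphism and not a composition with some a priori unidentified isomorphism. Steps (ii) and (iii) are then routine bookkeeping with the explicit vertical maps $a_{\tau} \cdot i_{C}$ and $\varphi_{\widehat{\tau}, \widetilde{\tau}}$ recorded in \S \ref{subsec:Gro-dual-Atiyah-class}.
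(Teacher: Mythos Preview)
Your proposal is correct and follows essentially the same approach as the paper's own proof, which is a two-line argument invoking Corollary \ref{coro:Gro-dual-Ishida-main-Q-div} together with the $\bfR\Gamma$-acyclicity of structure sheaves of proper toric varieties. You have simply unpacked the implicit steps (Grothendieck--Serre duality for the connecting map, and $\Gamma(X,\cO_{S_{\tau}})=\CC$ by properness) that the paper leaves to the reader.
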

    \begin{proof}
        This follows from the fact that structure sheaves of proper toric varieties are $\bfR \Gamma$-acyclic. Then the assertion can be immediately obtained from Corollary \ref{coro:Gro-dual-Ishida-main-Q-div}.
    \end{proof}

    \subsection{Towards local cohomological dimension}
    We now focus on the case when $X$ is a projective toric variety, and $D = \sum_{\rho} \alpha_{\rho} D_{\rho}$ is an ample $\QQ$-divisor on $X$. This means that the function
    $$ \psi : N_{\RR} \to \RR ,\qquad x \mapsto \langle u_{\tau}, x \rangle \text{ if } x \in \tau$$
    is strictly convex. Therefore,
    $$ \varsigma = \{ (x, t) \in N \oplus \RR e_{n+1} : t \geq \psi(x) \} $$
    is a strictly convex rational polyhedral cone in $\widetilde{N}_{\RR} = N_{\RR} \oplus \RR e_{n+1}$. Note that the non-trivial faces of $\varsigma$ are $\varsigma$ itself, and $\widehat{\tau}$ for $\tau \in \cP$. As a quick application of Corollary \ref{coro:connecting-map-is-Lefschetz}, we have the following:

    \begin{prop} \label{prop:exactness-of-Ish-and-lefschetz}
        Suppose that $p < n$. The following are equivalent:
        \begin{enumerate}
            \item $H^{l}(\Ish_{\varsigma}^{p+1}) = 0$
            \item $c_{1}(D)\dual \colon \HH^{l}(X, \Ish_{X}^{p}) \to \HH^{l+1}(X, \Ish_{X}^{p+1})$ is injective and $c_{1}(D)\dual \colon \HH^{l-1}(X, \Ish_{X}^{p}) \to \HH^{l}(X, \Ish_{X}^{p+1})$ is surjective.
            \item $c_{1}(D) \colon H^{n-l-1}(X, \duBois_{X}^{n-p-1}) \to H^{n-l}(X, \duBois_{X}^{n-p})$ is surjective and $c_{1}(D) \colon H^{n-l}(X, \duBois_{X}^{n-p-1}) \to H^{n-l+1}(X, \duBois_{X}^{n-p})$ is injective.
        \end{enumerate}
    \end{prop}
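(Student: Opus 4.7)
The approach is to invoke Corollary \ref{coro:connecting-map-is-Lefschetz} and identify the middle column of the displayed diagram with the Ishida complex $\Ish_{\varsigma}^{p+1}$ of the cone $\varsigma$. The three equivalences then drop out of the long exact sequence in cohomology of the associated short exact sequence of complexes, together with Grothendieck-Serre duality.

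The ampleness of $D$ makes $\psi$ strictly convex, so $\varsigma$ is a strongly convex rational polyhedral cone in $\widetilde{N}_{\RR}$. A direct analysis of the epigraph of $\psi$ shows that the proper faces of $\varsigma$ are precisely $\widehat{\tau}$ for $\tau \in \cP$, with $\dim \widehat{\tau} = \dim \tau$; in particular, strict convexity places the ray $\RR_{\geq 0} e_{n+1}$ in the interior of $\varsigma$, so it is not a face. Since $p+1 \leq n$, the faces of $\varsigma$ of dimension at most $p+1$ are in bijection with the faces of $\cP$ of the same dimension, and the terms of $\Ish_{\varsigma}^{p+1}$ match those of the middle column. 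The differentials of both complexes are contractions by the primitive vectors $n_{\widehat{\mu}, \widehat{\tau}}$, and the third relation in Remark \ref{rema:Gro-dual-Atiyah-linalg} ensures that the scaling factors $a_\tau$ appearing in the definition of the middle column are absorbed consistently. Using $\widetilde{\tau}^\perp = \tau^\perp$, the left and right columns are $\Ish_{\cP}^{p+1}$ and $\Ish_{\cP}^p$, so the diagram yields a short exact sequence of complexes of vector spaces
$$0 \to \Ish_{\cP}^{p+1} \to \Ish_{\varsigma}^{p+1} \to \Ish_{\cP}^p \to 0.$$

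Since $X$ is a proper toric variety, the structure sheaves of torus-invariant closed subvarieties are $\bfR\Gamma$-acyclic with global sections $\CC$, so $\HH^l(X, \Ish_X^r) = H^l(\Ish_{\cP}^r)$. Corollary \ref{coro:connecting-map-is-Lefschetz} identifies the connecting map with $c_1(D)^\vee$, and the long exact sequence reads
$$\HH^{l-1}(X, \Ish_X^p) \xrightarrow{c_1(D)^\vee} \HH^l(X, \Ish_X^{p+1}) \to H^l(\Ish_{\varsigma}^{p+1}) \to \HH^l(X, \Ish_X^p) \xrightarrow{c_1(D)^\vee} \HH^{l+1}(X, \Ish_X^{p+1}),$$
from which (1) $\Leftrightarrow$ (2) follows at once. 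For (2) $\Leftrightarrow$ (3), I would apply Serre duality on the proper Cohen-Macaulay variety $X$: Proposition \ref{prop:Gro-dual-of-Ishida-is-duBois} together with biduality gives $\HH^l(X, \Ish_X^p)^\vee \simeq H^{n-l}(X, \duBois_X^{n-p})$, under which $c_1(D)^\vee$ is the transpose of $c_1(D)$, interchanging injectivity and surjectivity with the appropriate index shifts.

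The main obstacle is the identification of $\Ish_{\varsigma}^{p+1}$ with the middle column: not just the terms but also the signs and scaling factors $a_\tau$ must reproduce the intrinsic Ishida differentials of $\varsigma$. Fortunately, this bookkeeping is exactly what Subsection \ref{subsec:Gro-dual-Atiyah-class} and Remark \ref{rema:Gro-dual-Atiyah-linalg} were set up to handle, so no essentially new input is required beyond a careful face-by-face comparison.
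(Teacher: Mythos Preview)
Your proposal is correct and follows exactly the paper's approach: identify the middle column of the diagram in Corollary \ref{coro:connecting-map-is-Lefschetz} with $\Ish_{\varsigma}^{p+1}$ and read off the equivalences from the long exact sequence together with Serre duality. One small imprecision: the scaling factors $a_\tau$ appear only in the \emph{horizontal} maps of that diagram, not in the vertical differentials of the middle column, which are already the intrinsic Ishida maps $\varphi_{\widehat{\mu},\widehat{\tau}}$ for $\varsigma$; so no ``absorption'' is actually needed for the identification, and Remark \ref{rema:Gro-dual-Atiyah-linalg} is used rather to check commutativity of the full diagram, not to match the middle column with $\Ish_{\varsigma}^{p+1}$.
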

    \begin{proof}
        This follows from the fact that $\Ish_{\varsigma}^{p+1}$ is the middle term in the short exact sequence of Corollary \ref{coro:connecting-map-is-Lefschetz}.
    \end{proof}

    Until now in this section, we started from a projective toric variety $X$ with an ample $\QQ$-line bundle, and constructed an affine toric variety corresponding to the cone $\varsigma$ of one dimension higher. However, we can reverse the order of this, i.e., we can start from an affine toric variety corresponding to a full-dimensional cone $\varsigma$ and consider a ray $\rho$ in the interior. By performing a $\ZZ$-linear change of coordinates, we can assume that $\rho$ is one of the basis vectors of $N$, and we get a projective toric variety of dimension one less, with an ample line bundle. In this way, we can control the vanishing and non-vanishing behavior of the cohomologies of $\Ish_{\varsigma}$ in terms of the Lefschetz operator on the singular cohomology of a projective toric variety of dimension one less. In particular, rephrasing Corollaries \ref{coro:Gro-dual-Ishida-main-Q-div} and \ref{coro:connecting-map-is-Lefschetz} immediately gives a proof of Theorem \ref{theo:lefschetz-ishida-correct-ver}. 
    
    We now give a proof of Proposition \ref{prop:hard-lefschetz-injectivity}. We would like to thank Kalle Karu for suggesting comparing certain graded pieces of the singular cohomology with intersection cohomology.

    \begin{proof}
        We will show that for every $p$, the natural map $\gr^p_F H^{2p}(X,\QQ) \to \gr^p_F IH^{2p}(X,\QQ)$ is injective. Note that we have $\gr^p_F H^{2p}(X) \simeq H^{p}(X, \duBois_{X}^{p})$. The above injectivity would suffice since by the Hard Lefschetz theorem for intersection cohomology, $\gr^p_F IH^{2p}(X,\QQ) \to \gr^p_F IH^{2p+2}(X,\QQ)$ is injective for $p \leq \frac{n-1}{2}$.

        First, by Weber's theorem \cite[Remark 6.5]{Popa-Park:lefschetz} (see also \cite[Theorem 1.8]{Weber}), we have
        \[ \ker\left(H^{2p}(X,\QQ) \to IH^{2p}(X,\QQ) \right) = W_{2p-1} H^{2p}(X,\QQ). \]
        By \cite[Corollary 1.2]{LCDTV1}, $H^{2p}(X,\QQ)$, and hence $W_{2p-1} H^{2p}(X,\QQ)$, is mixed of Hodge--Tate type, i.e. all the weight graded pieces are pure Hodge structures of Hodge--Tate type. Therefore we have $\gr^p_F W_{2p-1} H^{2p}(X,\QQ) = 0$. We can see this by taking the short exact sequences $0 \to W_{i-1} H^{2p}(X,\QQ) \to W_i H^{2p}(X,\QQ) \to \gr^W_i H^{2p}(X,\QQ) \to 0$ for $i<2p$ and then using the fact that $\gr^p_F \gr^W_i H^{2p}(X,\QQ) = 0$ since $\gr^W_i H^{2p}(X,\QQ)$ is of Hodge--Tate type of weight $i$. Therefore, the natural map
        $$ \gr^p_F H^{2p}(X,\QQ) \to \gr^p_F IH^{2p}(X,\QQ)$$
        is injective, which finishes the proof.
    \end{proof}

    We end this section by giving the proof of Corollary \ref{coro:lcdef-of-4dim}.
    \begin{proof}[Proof of Corollary \ref{coro:lcdef-of-4dim}]
        We see that $\lcdef(X)$ can only be zero or 1, and it is 1 if and only if $H^{2}(\Ish_{\sigma}^{3}) \neq 0$. Note that $\HH^{2}(\Ish_{E}^{3}) \simeq H^{1}(\cO_{E})\dual = 0$ and hence $H^{2}(\Ish_{\sigma}^{3})$ is the kernel of the surjective map
        $$ \HH^{2}(\Ish_{E}^{2}) \to \HH^{3}(\Ish_{E}^{3})$$
        since the next term $H^3(\Ish^3_\sigma) = 0$ by \cite{LCDTV1}*{Theorem 1.3}. Note that $\HH^{3}(\Ish_{E}^{3}) \simeq H^{0}(E, \cO_{E})\dual$ is one dimensional. Also, $\HH^{3}(\Ish_{E}^{1}) = H^{0}(E, \duBois_{E}^{2})\dual = 0$ since $\Ish_{E}^{1}$ is supported in degrees 0 and 1. Also, $H^{2}(E, \cO_{E}) = 0$ since $E$ is toric. Therefore, $H^{2}(E,\CC) \simeq H^{1}(E, \duBois_{E}^{1}) \simeq \HH^{2}(E, \Ish_{E}^{2})\dual$. This shows the assertion.
    \end{proof}

    \section{Other results on the local cohomological defect}\label{sec:combinatorial-results-lcdef}

    In this section, we prove the combinatorial results stated at the end of the introduction.
	
	\begin{proof}[Proof of Theorem \ref{theo:adding-new-ray}]
	We observe that the affine chart of $\widetilde{X}$ corresponding to $\sigma$ is isomorphic to $\CC^{\times} \times X$, and hence $\lcdef(\widetilde{X}) \geq \lcdef(X)$. Therefore, it is enough to show the other inequality. We put $c = \lcdef(X)$.
    
    Let $\cP$ be the fan consisting of faces of $\sigma$ and $\cQ$ the fan corresponding to $\prescript{\rho}{}{\sigma}$. We first notice that the elements in $\cQ$ are either $\mu \in \cP$, or $\prescript{\rho}{}{\mu}:= \mathrm{span}_{\RR\geq0}(\rho, \mu)$ for some $\mu \in \cP$. We let $V = M_{\RR}$ and $\widetilde{V} = \widetilde{M}_{\RR}$. For $\mu \in \cP$, we denote by
	$$ \mu_{V}^{\perp} = \{ u \in V : \langle u ,v \rangle = 0 \text{ for all } v \in \mu\} $$
	and $\mu_{\widetilde{V}}^{\perp} = \{ u \in \widetilde{V} : \langle u ,v \rangle = 0 \text{ for all } v \in \mu\}$ in order to prevent confusion. Note that we have the short exact sequence
	$$ 0 \to \sigma_{\widetilde{V}}^{\perp} \to \widetilde{V} \to V \to 0$$
	where the right map is the restriction. We also have $\widetilde{V} = \rho^{\perp} \oplus \sigma_{\widetilde{V}}^{\perp}$ and emphasize that this is an \textit{internal} direct sum. Similarly, we have $\mu_{\widetilde{V}}^{\perp} = \sigma_{\widetilde{V}}^{\perp} \oplus \prescript{\rho}{}{\mu}^{\perp}$ for each $\mu \in \cP$. We also point out that the restriction morphism $\mu_{\widetilde{V}}^{\perp} \to \mu_{V}^{\perp}$ sends $\prescript{\rho}{}{\mu}^{\perp}$ isomorphically to $\mu_{V}^{\perp}$. We recall the integer $a_{\mu}$'s in Definition \ref{defi:a_mu}.
	
	\begin{lemm} \label{lemm:adding-ray-linalg-1}
		Let $\mu, \nu \in \cP$ such that $\mu \subset \nu$ and $d_{\nu} = d_{\mu} + 1$. Then the following diagram commutes:
		$$ \begin{tikzcd}
			\bigwedge^{l} \mu_{\widetilde{V}}^{\perp} \ar[r, "a_{\mu}^{-1}"] \ar[d, "\varphi_{\mu, \nu}"] & \sigma_{\widetilde{V}}^{\perp} \otimes \bigwedge^{l-1} \prescript{\rho}{}{\mu}^{\perp} \oplus \bigwedge^{l} \prescript{\rho}{}{\mu}^{\perp} \ar[d, "\varphi_{\prescript{\rho}{}{\mu}, \prescript{\rho}{}{\nu}}"] \\
			\bigwedge^{l-1} \nu_{\widetilde{V}}^{\perp} \ar[r, "a_{\nu}^{-1}"] & \sigma_{\widetilde{V}}^{\perp} \otimes \bigwedge^{l-1} \prescript{\rho}{}{\nu}^{\perp} \oplus \bigwedge^{l} \prescript{\rho}{}{\nu}^{\perp},
		\end{tikzcd} $$
		where the horizontal arrows are induced by $\mu_{\widetilde{V}}^{\perp} = \sigma_{\widetilde{V}}^{\perp} \oplus \prescript{\rho}{}{\mu}^{\perp}$ followed by multiplication by $a_{\mu}^{-1}$, and similarly for $\nu$.
	\end{lemm}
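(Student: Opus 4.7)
The plan is to verify the commutativity of the diagram by a direct computation, exploiting two facts that have already been established: first, that $\sigma_{\widetilde V}^{\perp}$ is one-dimensional (since $\sigma$ is full-dimensional in $N_{\RR}$) and annihilates every element of $N_{\RR} \subset \widetilde N_{\RR}$, so in particular annihilates $n_{\mu,\nu}$; and second, the linear algebra identity $a_{\mu} n_{\mu,\nu} = a_{\nu} n_{\prescript{\rho}{}{\mu}, \prescript{\rho}{}{\nu}}$ modulo $\langle \prescript{\rho}{}{\mu}\rangle \cap N$ supplied by Lemma \ref{lemm:vector_multiply_amu2}.

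First, I fix a generator $v$ of the one-dimensional space $\sigma_{\widetilde V}^{\perp}$. Using the direct sum decomposition $\mu_{\widetilde V}^{\perp} = \sigma_{\widetilde V}^{\perp} \oplus \prescript{\rho}{}{\mu}^{\perp}$, every element of $\bigwedge^{l} \mu_{\widetilde V}^{\perp}$ has a unique expression $v \wedge \alpha + \beta$ with $\alpha \in \bigwedge^{l-1} \prescript{\rho}{}{\mu}^{\perp}$ and $\beta \in \bigwedge^{l} \prescript{\rho}{}{\mu}^{\perp}$. The top horizontal arrow of the diagram is then $v \wedge \alpha + \beta \mapsto (a_{\mu}^{-1}\alpha, a_{\mu}^{-1}\beta)$, and the bottom horizontal arrow is defined analogously from the decomposition $\nu_{\widetilde V}^{\perp} = \sigma_{\widetilde V}^{\perp} \oplus \prescript{\rho}{}{\nu}^{\perp}$. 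Note that the inclusion $\nu_{\widetilde V}^{\perp} \hookrightarrow \mu_{\widetilde V}^{\perp}$ is block diagonal with respect to these splittings.

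Next, I compute the action of $\varphi_{\mu,\nu} = \iota_{n_{\mu,\nu}}$ on $v\wedge\alpha + \beta$. Since $n_{\mu,\nu} \in \nu \subset \sigma \subset N$ and $v$ annihilates all of $N$, we have $\langle v, n_{\mu,\nu}\rangle = 0$, and the graded Leibniz rule gives $\iota_{n_{\mu,\nu}}(v\wedge\alpha) = -v\wedge \iota_{n_{\mu,\nu}}\alpha$. Hence $\varphi_{\mu,\nu}(v\wedge\alpha + \beta) = -v\wedge\iota_{n_{\mu,\nu}}\alpha + \iota_{n_{\mu,\nu}}\beta$, which already sits inside $\sigma_{\widetilde V}^{\perp} \oplus \prescript{\rho}{}{\nu}^{\perp}$-wedges once we observe that contraction by an element of $\nu$ takes forms annihilating $\mu$ to forms annihilating $\nu$, and (in the first summand) nothing in $\prescript{\rho}{}{\mu}^{\perp}$ gets moved out of $\prescript{\rho}{}{\nu}^{\perp}$ after contraction by $n_{\mu,\nu}$. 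Comparing to the other way around the square, which takes $(\alpha,\beta)$ componentwise to $(a_{\mu}^{-1}\iota_{n_{\prescript{\rho}{}{\mu},\prescript{\rho}{}{\nu}}}\alpha, a_{\mu}^{-1}\iota_{n_{\prescript{\rho}{}{\mu},\prescript{\rho}{}{\nu}}}\beta)$, commutativity reduces to the equality $a_{\nu}^{-1} \iota_{n_{\mu,\nu}} = a_{\mu}^{-1} \iota_{n_{\prescript{\rho}{}{\mu}, \prescript{\rho}{}{\nu}}}$ on forms in $\bigwedge^{*}\prescript{\rho}{}{\mu}^{\perp}$.

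This last identity is immediate from Lemma \ref{lemm:vector_multiply_amu2}: since $\alpha$ and $\beta$ annihilate $\prescript{\rho}{}{\mu}$, the congruence modulo $\langle\prescript{\rho}{}{\mu}\rangle \cap N$ is invisible to contraction, so $a_{\mu}\iota_{n_{\mu,\nu}} = a_{\nu}\iota_{n_{\prescript{\rho}{}{\mu},\prescript{\rho}{}{\nu}}}$ on such forms. I expect the only genuine care needed is bookkeeping the Koszul sign that appears in $\iota_{n_{\mu,\nu}}(v\wedge\alpha) = -v\wedge\iota_{n_{\mu,\nu}}\alpha$, which should be absorbed into the sign convention of the horizontal splitting (consistent with the anti-commutativity of Lemma \ref{lemm:anti-commutativity-complex} governing the Ishida differential). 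Beyond that, the lemma is essentially a repackaging of the two linear algebra lemmas established in \S\ref{sec:preliminaries}, so I do not anticipate any serious obstacle.
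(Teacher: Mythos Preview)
Your approach is correct and is precisely the unpacking of the paper's one-line proof, which simply invokes the identity $a_{\mu} n_{\mu,\nu} \equiv a_{\nu} n_{\prescript{\rho}{}{\mu},\prescript{\rho}{}{\nu}} \pmod{\langle \prescript{\rho}{}{\mu}\rangle \cap \widetilde{N}}$ from Lemma~\ref{lemm:vector_multiply_amu2}. The Koszul sign you flag on the $\sigma_{\widetilde V}^{\perp}$-summand is indeed the only bookkeeping point, and it is harmless for the application (it is absorbed into the identification of $Q_{1}^{\bullet}$ with $\sigma_{\widetilde V}^{\perp}\otimes \Ish_{X}^{l-1}$ later in the proof of Theorem~\ref{theo:adding-new-ray}).
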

	\begin{proof}
		This follows by the fact $a_{\mu} n_{\mu, \nu} = a_{\nu} n_{\prescript{\rho}{}{\mu}, \prescript{\rho}{}{\nu}}$ mod $\langle \prescript{\rho}{}{\mu}\rangle \cap \widetilde{N}$, addressed in Lemma \ref{lemm:vector_multiply_amu2}.
	\end{proof}
	
	\begin{lemm} \label{lemm:adding-ray-linalg-restriction}
		In the same setting as the previous lemma, the following diagram commutes:
		$$ \begin{tikzcd}
			\bigwedge^{l} \prescript{\rho}{}{\mu}^{\perp} \ar[r, "a_{\mu}"] \ar[d, "\varphi_{\prescript{\rho}{}{\mu}, \prescript{\rho}{}{\nu}}"] & \bigwedge^{l}\mu_{V}^{\perp} \ar[d, "\varphi_{\mu, \nu}"] \\ \bigwedge^{l-1} \prescript{\rho}{}{\nu}^{\perp} \ar[r, "a_{\nu}"] & \bigwedge^{l-1} \nu_{V}^{\perp},
		\end{tikzcd}$$
		where the horizontal arrows are given by the restriction morphisms, followed by multiplication by $a_{\mu}$ and $ a_{\nu}$, respectively.
	\end{lemm}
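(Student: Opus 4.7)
The plan is to prove commutativity by tracking an arbitrary element $\alpha \in \bigwedge^{l} \prescript{\rho}{}{\mu}^{\perp}$ through both paths, exploiting two facts: the arithmetic identity $a_{\mu} n_{\mu, \nu} \equiv a_{\nu} n_{\prescript{\rho}{}{\mu}, \prescript{\rho}{}{\nu}} \pmod{\langle \prescript{\rho}{}{\mu}\rangle \cap \widetilde{N}}$ established in Lemma \ref{lemm:vector_multiply_amu2}, and the observation that contraction by an element of $N \subset \widetilde{N}$ commutes with the restriction $\widetilde{V} \to V$ on forms supported in $\prescript{\rho}{}{\mu}^{\perp}$.

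First, I would lift $n_{\mu, \nu}$ to a representative in $N$ and also view it as an element of $\widetilde{N}$. The key identity of Lemma \ref{lemm:vector_multiply_amu2} then gives $a_{\mu} n_{\mu, \nu} - a_{\nu} n_{\prescript{\rho}{}{\mu}, \prescript{\rho}{}{\nu}} \in \langle \prescript{\rho}{}{\mu} \rangle \cap \widetilde{N}$. Since any $\alpha \in \prescript{\rho}{}{\mu}^{\perp}$ annihilates this subspace, we obtain the equality of contractions
\[
a_{\nu}\, \iota_{n_{\prescript{\rho}{}{\mu}, \prescript{\rho}{}{\nu}}} \alpha \;=\; a_{\mu}\, \iota_{n_{\mu, \nu}} \alpha
\]
as elements of $\bigwedge^{l-1} \widetilde{V}$, and both in fact lie in $\bigwedge^{l-1} \prescript{\rho}{}{\nu}^{\perp}$.

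Next, I would apply restriction $\widetilde{V} \to V$ to the common value. Because $n_{\mu,\nu}$ lives in $N$, for any $\beta \in \widetilde{V}$ one has $\langle n_{\mu,\nu}, \beta \rangle = \langle n_{\mu,\nu}, \beta|_{V} \rangle$, and hence on the wedge powers $(\iota_{n_{\mu,\nu}} \alpha)|_{V} = \iota_{n_{\mu,\nu}}(\alpha|_{V})$. Thus the left-then-bottom composition yields $(a_{\nu}\iota_{n_{\prescript{\rho}{}{\mu}, \prescript{\rho}{}{\nu}}} \alpha)|_{\nu_{V}^{\perp}} = a_{\mu} \iota_{n_{\mu, \nu}}(\alpha|_{\mu_{V}^{\perp}})$, which is precisely the top-then-right composition. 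This establishes the commutativity.

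There is no genuine obstacle: once one has the integer relation from Lemma \ref{lemm:vector_multiply_amu2} and the compatibility of contractions-by-$N$-vectors with the restriction $\widetilde{V} \to V$, the lemma reduces to a direct identification of the two paths. The only care required is to keep track of which ambient space each contraction lives in, and to use the isomorphism $\prescript{\rho}{}{\mu}^{\perp} \xrightarrow{\sim} \mu_{V}^{\perp}$ (and likewise for $\nu$) induced by the restriction, so that the final equality indeed takes place in $\bigwedge^{l-1} \nu_{V}^{\perp}$.
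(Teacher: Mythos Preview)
Your argument is correct and follows essentially the same approach as the paper: the paper's proof simply reads ``Analogous to the previous Lemma,'' which in turn invokes the identity $a_{\mu} n_{\mu,\nu} \equiv a_{\nu} n_{\prescript{\rho}{}{\mu},\prescript{\rho}{}{\nu}} \pmod{\langle \prescript{\rho}{}{\mu}\rangle \cap \widetilde{N}}$ from Lemma~\ref{lemm:vector_multiply_amu2}. Your write-up makes explicit the one additional point needed here---that contraction by a vector in $N$ commutes with the restriction $\widetilde{V}\to V$---which is exactly what is required to pass from the inclusion setting of Lemma~\ref{lemm:vector_multiply_amu2} to the restriction setting of the present lemma.
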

	\begin{proof}
		Analogous to the previous Lemma.
	\end{proof}
	
	We finally give the proof of Theorem \ref{theo:adding-new-ray}. Since $\lcdef(X) = c$, we have $\cH^{i}(\Ish_{X}^{l}) = 0$ for $i \geq n-l+c+1$, by Proposition \ref{prop:lcd-intermsof-Ishida}. It is enough to show that $\cH^{i}(\Ish_{\widetilde{X}}^{l}) = 0$ for $i \geq (n+1)-l+c+1$. Again, we use the grading of the Ishida complex by $\widetilde{M} = \Hom(\widetilde{N}, \ZZ)$ and examine the exactness at each degree.
	
	\noindent
	We first examine the case when $u \in \mu_{\circ}\sta \subset \widetilde{M}$ for some $\mu \in \cP$. We see that $\Ish_{\widetilde{X}}^{l}$ at degree $u$ is
	$$ Q^{\bullet} : \bigwedge^{l} \widetilde{V} \to \bigoplus_{\lambda\lind{1} \in \cP_{1}^{\subset \mu}} \bigwedge^{l-1} (\lambda\lind{1})_{\widetilde{V}}^{\perp} \to \ldots \bigoplus_{\lambda\lind{l}\in \cP_{l}^{\subset \mu}} \RR_{\lambda\lind{l}}.$$
	Observe that from Lemma \ref{lemm:adding-ray-linalg-1}, this complex decomposes into two pieces $Q^{\bullet} \simeq Q_{1}^{\bullet} \oplus Q_{2}^{\bullet}$, where the individual complexes are given by
	\begin{align*}
		Q_{1}^{\bullet} &: \sigma_{\widetilde{V}}^{\perp} \otimes \left( \bigwedge^{l-1}\rho^{\perp} \to \bigoplus_{\lambda\lind{1}\in \cP_{1}^{\subset \mu}} \bigwedge^{l-2} \prescript{\rho}{}{\lambda\lind{1}}^{\perp} \to \ldots \to \bigoplus_{\lambda\lind{l-1}\in \cP_{l-1}^{\subset \mu}}  \RR_{\prescript{\rho}{}{\lambda\lind{l-1}}} \right), \\
		Q_{2}^{\bullet} &: \bigwedge^{l}\rho^{\perp} \to \bigoplus_{\lambda\lind{1}\in \cP_{1}^{\subset \mu}} \bigwedge^{l-1} \prescript{\rho}{}{\lambda\lind{1}}^{\perp} \to \ldots \to \bigoplus_{\lambda\lind{l}\in \cP_{l}^{\subset \mu}}  \RR_{\prescript{\rho}{}{\lambda\lind{l}}}.\\
	\end{align*}
	We point out that the isomorphism $Q^{\bullet} \simeq Q_{1}^{\bullet} \oplus Q_{2}^{\bullet}$ is given by multiplication by $a_{\lambda}^{-1}$ for each term corresponding to $\lambda$. By Lemma \ref{lemm:adding-ray-linalg-restriction}, the complexes $Q_{1}^{\bullet}$ and $Q_{2}^{\bullet}$ are isomorphic to the degree $v$ part of the complex $\sigma_{\widetilde{V}}^{\perp} \otimes \Ish_{X}^{l-1}$ and $\Ish_{X}^{l}$, respectively, where $v \in \mu_{\circ}\sta \subset M$. The multiplications by $a_{\lambda}$'s are also involved here as well. Therefore, this complex is exact in cohomological degrees $i \geq n-(l-1)+c+1$.
	
	\noindent
	Now, we examine the case when $u \in \prescript{\rho}{}{\mu}_{\circ}\sta$ for some $\mu \in \cP$. Then $\Ish_{\widetilde{X}}^{l}$ at degree $u$ is
	$$ C^{\bullet} : \bigwedge^{l} \widetilde{V} \to \bigoplus_{\lambda\lind{1} \in \cQ_{1}^{\subset \prescript{\rho}{}{\mu}}} \bigwedge^{l-1} (\lambda\lind{1})_{\widetilde{V}}^{\perp} \to \ldots \bigoplus_{\lambda\lind{l}\in \cQ_{l}^{\subset \prescript{\rho}{}{\mu}}} \RR_{\lambda\lind{l}}.$$
	We have a surjective homomorphism between chain complexes $C^{\bullet} \to Q^{\bullet}$ given by the projection, where $Q^{\bullet}$ is describe above. The kernel of this morphism is given by
	$$ S^{\bullet} : 0 \to \bigwedge^{l-1} \rho^{\perp} \to \bigoplus_{\lambda\lind{1} \in \cP_{1}^{\subset \mu}} \bigwedge^{l-2} \prescript{\rho}{}{\lambda}\lind{1}^{\perp} \to \ldots \to \bigoplus_{\lambda\lind{l-1} \in \cP_{l-1}^{\subset \mu}} \RR_{\prescript{\rho}{}{\lambda}\lind{l-1}}, $$
	where $\bigwedge^{l-1} \rho^{\perp}$ is sitting in cohomological degree 1. We describe the connecting homomorphism $H^{i}(Q^{\bullet}) \to H^{i+1}(S^{\bullet})$. For this, we consider $Q^{\bullet} \simeq Q_{1}^{\bullet} \oplus Q_{2}^{\bullet}$ above, and compute the individual connecting homomorphisms $\delta_{1} : H^{i}(Q_{1}^{\bullet}) \to H^{i+1}(S^{\bullet})$ and $\delta_{2} : H^{i}(Q_{2}^{\bullet}) \to H^{i+1}(S^{\bullet})$.
	
	\noindent
	We first show that $\delta_{2}$ is the zero map. Pick an element $(v_{\lambda})_{\lambda \in \cP_{i}^{\subset\mu}}$ representing a cohomology class in $H^{i}(Q_{2}^{\bullet})$. Here, we have $v_{\lambda} \in \bigwedge^{l-i} \prescript{\rho}{}{\lambda}^{\perp}$. This element lifts to an element in $C^{i} = \bigoplus_{\lambda \in \cQ_{i}^{\subset \prescript{\rho}{}{\mu}}} \bigwedge^{l-i} \lambda^{\perp}$ by assigning zero to the faces containing $\rho$, and $a_{\lambda} v_{\lambda}$ for the faces $\lambda$ that does not contain $\rho$. The image $w$ of this element in $C^{i+1}$ should lie in $S^{i+1}$ and it represents $\delta_{2}((a_{\lambda}v_{\lambda})_{\lambda \in \cP_{i}^{\subset\mu}})$. Let $w_{\nu}$ be the $\bigwedge^{l-i-1}\prescript{\rho}{}{\nu}^{\perp}$-component of $w$ where $\nu \in \cP_{i}^{\subset\mu}$. Then we have
	$$ w_{\nu} = \varphi_{\nu, \prescript{\rho}{}{\nu}}(a_{\nu}v_{\nu}).$$
	However, the kernel of the map $\varphi_{\nu, \prescript{\rho}{}{\nu}} : \bigwedge^{l-i}\nu_{\widetilde{V}}^{\perp} \to \bigwedge^{l-i-1} \prescript{\rho}{}{\nu}^{\perp} $ is exactly $\bigwedge^{l-i} \prescript{\rho}{}{\nu}$ and therefore $w_{\nu} = 0$. This shows that the map $\delta_{2} : H^{i}(Q_{2}^{\bullet}) \to H^{i+1}(S^{\bullet})$ is zero.
	
	\noindent
	Next, we show that $\delta_{1}$ induces an isomorphism. Similarly we pick $(v_{\lambda})_{\lambda \in \cP_{i}^{\subset\mu}}$ representing a cohomology class in $H^{i}(Q_{1}^{\bullet})$, where $v_{\lambda} \in \sigma_{\widetilde{V}}^{\perp} \otimes \bigwedge^{l-i-1} \lambda^{\perp}$. By a similar computation, $\delta_{1}((v_{\lambda})_{\lambda \in \cP_{i}^{\subset\mu}})$ is represented by $(w_{\lambda})_{\lambda \in \cP_{i}^{\subset\mu}}$, where $w_{\lambda} = \varphi_{\lambda, \prescript{\rho}{}{\lambda}}(a_{\lambda}v_{\lambda})$. Composing Lemmas \ref{lemm:adding-ray-linalg-1} and \ref{lemm:anti-commutativity-complex}, we see that the following diagram commutes:
	$$ 
	\begin{tikzcd}
		\sigma_{\widetilde{V}}^{\perp} \otimes \bigwedge^{l-i-1}\prescript{\rho}{}{\lambda}^{\perp} \ar[r, hook, "a_{\lambda}"]\ar[d,"\id \otimes \varphi_{\prescript{\rho}{}{\lambda}, \prescript{\rho}{}{\nu}}"] & \bigwedge^{l-i} \lambda_{\widetilde{V}}^{\perp} \ar[r, "\varphi_{\lambda, \prescript{\rho}{}{\lambda}}"] \ar[d,"\varphi_{\lambda, \nu}"]&\bigwedge^{l-i-1}\prescript{\rho}{}{\lambda}^{\perp} \ar[d,"-\varphi_{\prescript{\rho}{}{\lambda}, \prescript{\rho}{}{\nu}}"] \\
		\sigma_{\widetilde{V}}^{\perp} \otimes \bigwedge^{l-i-1}\prescript{\rho}{}{\nu}^{\perp} \ar[r, hook, "a_{\nu}"] & \bigwedge^{l-i} \nu_{\widetilde{V}}^{\perp} \ar[r, "\varphi_{\nu, \prescript{\rho}{}{\nu}}"] &\bigwedge^{l-i-1}\prescript{\rho}{}{\nu}^{\perp},
	\end{tikzcd}
	$$
	for $\lambda \in \cP_{i}$ and $\nu \in \cP_{i+1}$ such that $\lambda \subset \nu$. This shows that $\delta_{1}$ is an isomorphism. Since we have the surjectivity of the connecting homomorphism $\delta : H^{i}(Q^{\bullet}) \to H^{i+1}(S^{\bullet})$, the long exact sequence of the cohomology associated to the short exact sequence $0 \to S^{\bullet} \to C^{\bullet} \to Q^{\bullet} \to 0$ splits into short exact sequences
	$$ 0\to H^{i}(C^{\bullet}) \to H^{i}(Q^{\bullet}) \to H^{i+1}(S^{\bullet}) \to 0,$$
	and we have $H^{i}(C^{\bullet}) \simeq H^{i}(Q_{2}^{\bullet})$. We already observed that $Q_{2}$ is the degree $v$ part of the complex $\Ish_{X}^{l}$, where $v \in \mu_{\circ}\sta \subset M$. Therefore, $H^{i}(C^{\bullet}) = 0$ for $i \geq n-l+c+1$. This concludes the proof of Theorem \ref{theo:adding-new-ray}.
    \end{proof}

    We now move on to the proofs of Proposition \ref{prop:Euler-char-lcdef}, and Theorems \ref{theo:shelling_contains_vertex} and \ref{theo:lcdef-of-not-a-pyramid-dim-4}. For $\sigma$ a full-dimensional cone of dimension 4, observe that $\lcdef(X)$ is either 0 or 1, and $\lcdef(X) = 1$ if and only if $H^{2}(\Ish_{\sigma}^{3}) \neq 0$.

    \begin{proof}[Proof of Proposition \ref{prop:Euler-char-lcdef}]
        Note that the only possible non-zero cohomologies of $\Ish_{\sigma}^{3}$ are $H^{1}$ and $H^{2}$. Let $v$ (respectively, $e$ and $f$) be the number of faces of $\sigma$ of dimension 1 (respectively, 2 and 3). Note that the Euler characteristic of $\Ish_{\sigma}^{3}$ is
        $$ 4 - 3v + 2e - f = 2(v - e+ f) - 3v + 2e - f = f - v.$$
        This quantity is equal to $\dim H^{2}(\Ish_{\sigma}^{3}) - \dim H^{1} (\Ish_{\sigma}^{3})$. Therefore, $H^{2}(\Ish_{\sigma}^{3}) \neq 0$ if $f > v$.
    \end{proof}

    Before proving Theorems \ref{theo:shelling_contains_vertex} and \ref{theo:lcdef-of-not-a-pyramid-dim-4}, we explain the following technique that will be commonly used. Let $\sigma$ be a full-dimensional cone of dimension 4. We prove the two theorems using the shelling of the cone $\sigma$.
    
    Let $f_{0},\ldots, f_{r}$ be a shelling order of $\sigma$. Then for $1 \leq k \leq r$, we can consider the complex
    $$ F^{k} \colon 0 \to \bigoplus_{v \not\subset \bigcup_{i=1}^{k} f_{i}} V_{v}^{3} \to \bigoplus_{e \not\subset \bigcup_{i=1}^{k} f_{i}} V_{e}^{3} \to \bigoplus_{f \not\subset \bigcup_{i=1}^{k} f_{i}} V_{f}^{3},$$
    and $F^{0} = \Ish_{\sigma}^{3}$. The sums for $v, e,$ and $f$ run through $1, 2,$ and $3$-dimensional faces, respectively. One can show that $F^{0} \supset F^{1} \supset \ldots \supset F^{r} = 0$ is a filtration by chain complexes. Hence, one can use the associated spectral sequence in order to compute the cohomologies of $\Ish_{\sigma}^{3}$.

    \begin{proof}[Proof of Theorem \ref{theo:shelling_contains_vertex}]
    We need to show that $H^2(\Ish^3_\sigma) = 0$. We have a short exact sequence of complexes
    \[ 0 \to F^{r-2} \to \Ish^3_{\sigma} \to F^0/F^{r-2} \to 0 \]
    where $F^{r-2}$ is defined to be the kernel of the map $\Ish^3_{\sigma} \to F^0/F^{r-2}$. We show that the hypothesis guarantees that $H^2(F^0/F^{r-2}) = 0$ by considering the spectral sequence associated to the shelling.
    Thus, we would be done if we could show that $H^2(F^{r-2})=0$.

    We observe that the last two faces $f_{r-1}$ and $f_{r}$ have to be adjacent to each other since $\bigcup_{j=1}^{r-2} f_{j}$ is homeomorphic to the closed disk times $\RR_{\geq 0}$. Therefore, if we denote by $e$ the two dimensional face $f_{r-1} \cap f_{r}$, then $F^{f-2}$ is exactly
    $$ V_{e}^{3} \to V_{f_{r-1}}^{3} \oplus V_{f_{r}}^{3},$$
    sitting in degrees 2 and 3. This complex is exact.
    \end{proof}

\begin{proof}[Proof of Theorem \ref{theo:lcdef-of-not-a-pyramid-dim-4}]
    We will prove that $H^2(\Ish^3_{\sigma}) \neq 0$, which would imply that $\lcdef(X) = 1$. We will use the shelling filtration to show the same.
    
    Let $f_1,\dots,f_k$ denote all the facets which contain $v_0$. Take a shelling of $\sigma$ with the first $k$ facets being $f_1,\dots,f_k$. Then $F^0/F^k$ is given by the complex
    \[ 0 \to V_{0}^{3} \to \bigoplus_{v \subset \bigcup_{i=1}^{k} f_i} V_{v}^{3} \to \bigoplus_{e \subset \bigcup_{i=1}^{k} f_i} V_{e}^{3} \to \bigoplus_{f \subset \bigcup_{i=1}^{k} f_i} V_{f}^3 \to 0. \]
    The dimensions of the 4 spaces are $4$, $3(k+1)$, $4k$ and $k$ respectively. Thus, the Euler characteristic is $1$, which implies that $H^2(F^0/F^k) \neq 0$ (since we have injectivity in degree $0$). Now, consider the short exact sequence of complexes
    \[ 0 \to F^{k} \to \Ish^3_{\sigma} \to F^0/F^k \to 0. \]
    We recall that the complex $F^{k}$ is given by
    \[ 0 \to 0 \to \bigoplus_{v \not\subset \bigcup_{i=1}^{k} f_i} V_{v}^{3} \to \bigoplus_{e \not\subset \bigcup_{i=1}^{k} f_i} V_{e}^{3} \to \bigoplus_{f \not\subset \bigcup_{i=1}^{k} f_i} V_{f}^3 \to 0. \]
    By the assumption that the rays other than $\tau_0$ span $X \otimes \mathbb{R}$, we are guaranteed that for every facet $f \not\subset \bigcup_{i=1}^{k} f_i$, there is a $2$-dimensional face $\mu \subset f$ such that $\mu \not\subset \bigcup_{i=1}^{k} f_i $. Let $f'$ denote the other facet that contains $\mu$. Now, since $V^3_\mu$ is $2$-dimensional, we can always find an $m \in V^3_\mu$ such that under the natural map in $\Ish^3_\sigma$, $m$ maps to $0 \in V^3_{f'}$ but $m$ maps to something non-zero in $V^3_f$. Doing this for all $f \not\subset \bigcup_{i=1}^{k} f_i$ guarantees that the complex $F^{k}$ is surjective at the last slot, i.e, $H^3(F^{k}) = 0$. This implies that $H^2(\Ish^3_\sigma) \neq 0$ since we showed above that $H^2(F^0/F^k) \neq 0$.
\end{proof}

    \begin{exam}\label{exam:dor}
        Here is an example of an affine toric variety $X$ of dimension $4$ such that $\lcdef(X) = 1$ and the locus where $X$ is not a rational homology manifold is 1-dimensional (i.e., the support of $\mathrm{Cone}(\QQ_X[n] \to \IC_X)$ is of dimension $1$). We will describe a $3$-dimensional rational convex polytope $P \subset \RR^3$ below. We then place the polytope in the affine hyperplane $\{x_4=1\}$ in $\RR^4$ (where the coordinates of $\RR^4$ are given by $x_1,x_2,x_3,x_4$) and take $\sigma$ to be the $4$-dimensional cone in $\RR^4$ over the polytope $P$. Finally, $X$ will be the affine toric variety associated to $\sigma$.

        In $\RR^3$, take a pyramid over an $n$-sided polygon for any $n>3$ and glue a 3-simplex along one of the triangular faces of the pyramid, while ensuring that the resulting object is a rational convex polytope. Call this polytope $P$. Consider the $4$-dimensional cone $\sigma$ over $P$ and let $X$ be the associated affine toric variety. Denote by $\tau$ the $3$-dimensional face of $\sigma$ corresponding to the $n$-sided polygon, and by $S_\tau$ the associated $1$-dimensional torus invariant subvariety. Since $\tau$ is the only non-simplicial face of $\sigma$, $S_\tau$ is precisely the locus where $X$ is non-simplicial. We can see either by Proposition \ref{prop:Euler-char-lcdef} or by Theorem \ref{theo:lcdef-of-not-a-pyramid-dim-4} that $\lcdef(X) = 1$. Additionally, the support of $\mathrm{Cone}(\QQ_X[n] \to \IC_X)$ is the locus where $X$ is non-simplicial, which is exactly $S_\tau$, a 1-dimensional subset. This shows that if we replace $\lcdef_{\mathrm{gen}}(X)$ by $\lcdef(X)$ in \cite{DOR-RHM}*{Theorem G}, the theorem fails (see \cite{DOR-RHM} for the definition of $\lcdef_{\mathrm{gen}}(X)$).
    \end{exam}

    \begin{exam} \label{exam:our-method-doesnot-work}
        We also give an example of a 4-dimensional cone $\sigma$ whose lcdef we cannot determine using our combinatorial methods. Define $\sigma$ to be spanned by the following set of 13 rays:
        \begin{align*}
            & (1,1,0,1), (1,0,1,1), (1,-1,0,1), (1,0,-1,1) \\
            & (1,1,1,0), (1,1,-1,0), (1,-1,1,0), (1,-1,-1,0) \\
            & (1,1,0,-1), (1,-1,0,-1), (1,0,-1,-1), (1,0,1,-1), (1,1,1,1).
        \end{align*}
        If we take a hyperplane section of $\sigma$, the 3-polytope we get is combinatorially equivalent to the convex hull of all midpoints of the edges of a cube, and one vertex of that cube. We can calculate (by Macaulay2 for instance) that $\dim H^{1}(\Ish_{\sigma}^{3}) = \dim H^{2}(\Ish_{\sigma}^{3}) = 1$, hence $\lcdef(\sigma) = 1$. The cone $\sigma$ has 13 1-dimensional faces, 24 2-dimensional faces and 13 3-dimensional faces. In particular, the number of 1-dimensional faces and 3-dimensional faces are equal, so Proposition \ref{prop:Euler-char-lcdef} does not apply. Additionally, every vertex of the polytope is contained in a quadrilateral or a pentagon, hence Theorem \ref{theo:lcdef-of-not-a-pyramid-dim-4} does not apply as well.
    \end{exam}
    
    {\bf Acknowledgments.} We would like to thank Mircea Musta\c{t}\u{a} for numerous helpful discussions and Lei Xue for several discussions on polytopes. We would also like to especially thank Kalle Karu for helpful suggestions.

	\bibliographystyle{alpha}
	\bibliography{Reference}

\end{document}